\newtheorem{theorem}{Theorem}[section]
\newtheorem{lemma}[theorem]{Lemma}
\newtheorem{proof of lemma}[theorem]{Proof of Lemma}
\newtheorem{proposition}[theorem]{Proposition}
\newtheorem{corollary}[theorem]{Corollary}
\theoremstyle{definition}
\newtheorem{definition}[theorem]{Definition}
\newtheorem{remark}[theorem]{Remark}
\numberwithin{equation}{section}
\begin{document}

\title[Heisenberg uniqueness pair]
{Heisenberg uniqueness pairs for some algebraic curves and surfaces}
%\runningtitle{Sets of Injectivity for Weighted Twisted Spherical Means}

%    Information for first author
\author{D.K. Giri and R.K. Srivastava}
 %Address of record for the research reported here
%\address{Deb Kumar Giri, Department of Mathematics, Indian Institute of Technology, Guwahati, India 781039.}

\address{Department of Mathematics, Indian Institute of Technology, Guwahati, India 781039.}
%    Current address
%\curraddr{Department of Mathematics and Statistics,
%Case Western Reserve University, Cleveland, Ohio 43403}
\email{deb.giri@iitg.ac.in, rksri@iitg.ac.in}
%\thanks will become a 1st page footnote.
%\thanks{The first author was supported in part by NSF Grant \#000000.}

%    General info
\subjclass[2000]{Primary 42A38; Secondary 44A35}

\date{\today}

\keywords{Bessel function, convolution, Fourier transform, Wiener's lemma, symmetric polynomial.}

\begin{abstract}

Let $X(\Gamma)$ be the space of all finite Borel measure $\mu$ in $\mathbb R^2$
which is supported on the curve $\Gamma$ and absolutely continuous with respect
to the arc length of $\Gamma$. For $\Lambda\subset\mathbb R^2,$ the pair
$\left(\Gamma, \Lambda\right)$ is called a Heisenberg uniqueness pair for $X(\Gamma)$
if any $\mu\in X(\Gamma)$ satisfies $\hat\mu\vert_\Lambda=0,$ implies $\mu=0.$ We
explore the Heisenberg uniqueness pairs corresponding to the cross, exponential
curves, and surfaces. Then, we prove a characterization of the Heisenberg uniqueness
pairs corresponding to finitely many parallel lines. We observe that the size of the
determining sets $\Lambda$ for $X(\Gamma)$ depends on the number of lines and their
irregular distribution that further relates to a phenomenon of interlacing of certain
trigonometric polynomials.

\end{abstract}

\maketitle

\section{Introduction}\label{section1}
The notion of the Heisenberg uniqueness pair has been first introduced by Hedenmalm
and Montes-Rodr\'iguez as a version of the uncertainty principle (see \cite{HR}).
We would like to mention that Heisenberg uniqueness pair up to some extent
is similar to an annihilating pair of Borel measurable sets of positive measure
as described by Havin and J\"{o}ricke (see \cite{HJ}). To describe this, consider
a pair of Borel measurable sets $\Gamma,\Lambda\subseteq\mathbb R.$ Then
$(\Gamma,\Lambda)$ form a mutually annihilating pair if any $\varphi\in L^2(\Gamma)$
whose Fourier transform $\hat\varphi$ supported on $\Lambda,$ implies $\varphi=0.$
\smallskip

Heisenberg uniqueness pair has a close relation with the long-standing problem
of determining the {\em exponential types} for finite measure which is eventually
about exploring the density of the set $\{e^{i\lambda t}:~\lambda\in\Lambda\}$ in
$L^2(\mu).$ For more details, we refer to Poltoratski \cite{AP1, AP2}.
In particular, the question of HUP can be thought as a dual problem of {\em gap problem}
(see \cite{AP1}). Let $\mu$ be a finite Borel measure which is supported on
a closed set $\Gamma\subset\mathbb R$ and
\[G_\Gamma=\sup\left\{a>0:~\exists~\mu\neq0,\text{ supp }\mu\subset\Gamma\text{ satisfies }\hat\mu\vert_{[0, a]}=0\right\}.\]
Let $\Lambda\subset\mathbb R.$ Then $\left(\Gamma, \Lambda\right)$
would be a HUP as long as the determining set $\Lambda$ intersect $\left[0, G_\Gamma\right]$
at most on a set of measure zero.

\smallskip

In addition, the concept of determining the Heisenberg uniqueness pair for a
class of finite measures has also a significant similarity with the celebrated
result due to M. Benedicks (see \cite{B}). That is, support of a function
$f\in L^1(\mathbb R^n)$ and its Fourier transform $\hat f$ both cannot be of
finite measure simultaneously. Later, various analogues of the Benedicks theorem
have been investigated in different set ups, including the Heisenberg group and
Euclidean motion groups (see \cite{NR, PS, SST}).

\smallskip

Further, the concept of Heisenberg uniqueness pair has a sharp contrast to
the known results about determining sets for measures by Sitaram et al.
\cite{BS, RS}, due to the fact that the determining set $\Lambda$ for the
function $\hat\mu$ has also been considered a thin set.
In particular, if $\Gamma$ is compact, then $\hat\mu$ is a real analytic function
having exponential growth and it can vanish on a very delicate set. Hence,
in this case, finding the Heisenberg uniqueness pairs becomes little easier.
However, this question becomes immensely difficult when the measure is supported
on a non-compact curve.

\smallskip

In the article \cite{HR}, Hedenmalm and Montes-Rodr\'iguez have shown that the pair
(hyperbola, some discrete set) is a Heisenberg uniqueness pair. As a dual problem,
a weak$^\ast$ dense subspace of $L^{\infty}(\mathbb R)$  has been constructed to
solve the Klein-Gordon equation. Further, a complete characterization of the
Heisenberg uniqueness pairs corresponding to any two parallel lines has been
given by Hedenmalm and Montes-Rodr\'iguez (see \cite{HR}).

\smallskip

Afterward, a considerable amount of work has been done pertaining to the
Heisenberg uniqueness pair in the plane as well as in the Euclidean spaces.

\smallskip

Recently, Lev \cite{L} and Sj\"{o}lin \cite{S1} have independently shown that
circle and certain system of lines are HUP corresponding to the unit circle $S^1.$
Further, Vieli \cite{V1} has generalized HUP corresponding to circle in the higher
dimension and shown that a sphere whose radius does not lie in the zero set of the
Bessel functions $J_{(n+2k-2)/2};~k\in\mathbb Z_+,$ the set of non-negative integers,
is a HUP corresponding to the unit sphere $S^{n-1}.$ In a recent article \cite{Sri},
the second author has shown that a cone is a Heisenberg uniqueness pair corresponding
to sphere as long as the cone does not completely lay on the level surface of any
homogeneous harmonic polynomial on $\mathbb R^n.$ Thereafter, a sense of evidence
emerged that the exceptional sets for the HUPs corresponding to the sphere are
eventually contained in the zero sets of the spherical harmonics and the Bessel
functions, though we yet resolve it (see \cite{GR, Sri}).

\smallskip

Further, Sj\"{o}lin \cite{S2} has investigated some of the Heisenberg uniqueness
pairs corresponding to the parabola. Subsequently, Babot \cite{Ba} has given a
characterization of the Heisenberg uniqueness pairs corresponding to a certain
system of three parallel lines. Thereafter, the authors in \cite{GR} have given
some necessary and sufficient conditions for the Heisenberg uniqueness pairs
corresponding to a system of four parallel lines. However, an exact analogue
for the finitely many parallel lines is still unsolved.

\smallskip

In a major development, Jaming and Kellay \cite{JK} have given a unifying proof
for some of the Heisenberg uniqueness pairs corresponding to the hyperbola, polygon,
ellipse and graph of the functions $\varphi(t)=|t|^\alpha,$ whenever $\alpha>0.$
Further, Gr\"{o}chenig and Jaming \cite{GJ} have worked out some of the Heisenberg
uniqueness pairs corresponding to the quadratic surface.

\smallskip

Let $\Gamma$ be a finite disjoint union of smooth curves in $\mathbb R^2.$
Let $X(\Gamma)$ be the space of all finite complex-valued Borel measure
$\mu$ in $\mathbb R^2$ which is supported on $\Gamma$ and absolutely
continuous with respect to the arc length measure on $\Gamma$. For
$(\xi,\eta)\in\mathbb R^2,$ the Fourier transform of $\mu$ is defined by
\[\hat\mu{( \xi,\eta)}=\int_\Gamma e^{-i\pi(x\cdot\xi+ y\cdot\eta)}d\mu(x,y).\]
In the above context, the function $\hat\mu$ becomes a uniformly continuous bounded
function on $\mathbb R^2.$ Thus, we can analyze the pointwise vanishing nature of the
function $\hat\mu.$
\begin{definition}
Let $\Lambda$ be a set in $\mathbb R^2.$ The pair $\left(\Gamma, \Lambda\right)$
is called a Heisenberg uniqueness pair for $X(\Gamma)$ if any $\mu\in X(\Gamma)$
satisfies $\hat\mu\vert_\Lambda=0,$ implies $\mu=0.$

\end{definition}
Since the Fourier transform is invariant under translation and rotation, one can
easily deduce the following invariance properties about the Heisenberg uniqueness
pair.
\smallskip

\begin{enumerate}[(i)]
\item Let $u_o, v_o\in\mathbb R^2.$ Then the pair $(\Gamma,\Lambda)$
is a HUP if and only if the pair $(\Gamma+ u_o,\Lambda+v_o)$ is a HUP.

\smallskip

\item Let $T : \mathbb R^2\rightarrow \mathbb R^2$ be an invertible
linear transform whose adjoint is denoted by $T^\ast.$  Then  $(\Gamma,\Lambda)$
is a HUP if and only if $\left(T^{-1}\Gamma,T^\ast\Lambda\right)$  is a HUP.
\end{enumerate}
Now, we state first known results on the Heisenberg uniqueness pair due to
Hedenmalm and Montes-Rodr\'iguez \cite{HR}. Then we briefly summarize the
recent progress on this problem.

\begin{theorem}\cite{HR}\label{th9}
Let $\Gamma=L_1\cup L_2,$  where $L_j;~j=1,2$ are any two parallel straight lines
and $\Lambda$ a subset of $\mathbb R^2$ such that $\overline{\pi(\Lambda)}=\mathbb R.$
Then $(\Gamma,\Lambda)$ is a Heisenberg uniqueness pair if and only if the set
\begin{equation}\label{exp24}
\widetilde\Lambda={\pi_1^a(\Lambda)}\cup\left[{{\pi_1^b(\Lambda)}\smallsetminus{\pi_1^c(\Lambda)}}\right]
\end{equation}
is dense in $\mathbb R$.
\end{theorem}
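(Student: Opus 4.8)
The plan is to reduce $\hat\mu|_\Lambda=0$ to a one‑variable statement about $C_0$‑functions and then play fibrewise linear algebra against continuity and the density of $\widetilde\Lambda$. Using the invariance properties (i)--(ii), I would normalize $\Gamma$ to the horizontal lines $\{y=0\}$ and $\{y=1\}$; a measure $\mu\in X(\Gamma)$ then consists of a pair $(f_1,f_2)\in L^1(\mathbb R)^2$, one density on each line, and
\[\hat\mu(\xi,\eta)=\widehat{f_1}(\xi)+e^{-i\pi\eta}\,\widehat{f_2}(\xi),\]
which for each fixed $\xi$ is an exponential polynomial of the form $a+b\,e^{-i\pi\eta}$ in $\eta$. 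Here $\widehat{f_1},\widehat{f_2}\in C_0(\mathbb R)$ and $\mu=0$ if and only if both vanish identically. The standing hypothesis $\overline{\pi(\Lambda)}=\mathbb R$ enters to exclude degenerate $\Lambda$ (e.g.\ those on which $e^{-i\pi\eta}$ takes only finitely many values, where no HUP exists).

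For the forward implication (density of $\widetilde\Lambda$ yields the HUP property), suppose $\hat\mu|_\Lambda=0$, i.e.\ $\widehat{f_1}(\xi)=-e^{-i\pi\eta}\widehat{f_2}(\xi)$ for every $(\xi,\eta)\in\Lambda$. Fixing $\xi$, the function $\eta\mapsto\widehat{f_1}(\xi)+e^{-i\pi\eta}\widehat{f_2}(\xi)$ vanishes on the fibre $\Lambda_\xi=\{\eta:(\xi,\eta)\in\Lambda\}$; since a nonzero such function has at most one zero on $\mathbb R/2\mathbb Z$, a fibre meeting two residues modulo two forces $\widehat{f_1}(\xi)=\widehat{f_2}(\xi)=0$, which is the content of $\pi_1^a(\Lambda)$. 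Taking moduli in the relation gives $|\widehat{f_1}|=|\widehat{f_2}|$ on the first‑coordinate projection of $\Lambda$, hence on all of $\mathbb R$ by continuity, that projection being dense because it contains the dense set $\widetilde\Lambda$. If $\widehat{f_2}\not\equiv0$, set $U=\{\widehat{f_2}\ne0\}$; on $U$ the quotient $Z=-\widehat{f_1}/\widehat{f_2}$ is a continuous $S^1$‑valued function, and for $\xi\in U$ in the projection of $\Lambda$ every $\eta\in\Lambda_\xi$ satisfies $e^{-i\pi\eta}=Z(\xi)$. This rules out $\xi\in\pi_1^a(\Lambda)$ and also $\xi\in\pi_1^b(\Lambda)\smallsetminus\pi_1^c(\Lambda)$, the latter being precisely the fibre data that no continuous phase can match on a neighbourhood of $\xi$. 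Hence $U\cap\widetilde\Lambda=\emptyset$; as $\widetilde\Lambda$ is dense and $U$ open, $U=\emptyset$, so $\widehat{f_2}\equiv0$, then $\widehat{f_1}\equiv0$, and $\mu=0$.

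For the converse I argue contrapositively. If $\widetilde\Lambda$ is not dense, fix an open interval $I$ with $I\cap\widetilde\Lambda=\emptyset$, so $I\cap\pi_1^a(\Lambda)=\emptyset$ and $I\cap\pi_1^b(\Lambda)\subseteq\pi_1^c(\Lambda)$. The first condition forces every fibre $\Lambda_\xi$ with $\xi\in I$ to be thin (a single residue modulo two), and the second is exactly what lets one pick a representative $\xi\mapsto\eta_\xi$ on $I$ with $\xi\mapsto e^{-i\pi\eta_\xi}$ smooth. I then take $\widehat{f_2}=\chi$, a smooth bump supported in $I$, and $\widehat{f_1}(\xi)=-e^{-i\pi\eta_\xi}\chi(\xi)$; multiplication by a smooth compactly supported function preserves the Fourier algebra $A(\mathbb R)=\mathcal F L^1(\mathbb R)$, so $f_1\in L^1(\mathbb R)$. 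By construction $\hat\mu(\xi,\eta)=\chi(\xi)\bigl(e^{-i\pi\eta}-e^{-i\pi\eta_\xi}\bigr)$ vanishes on all of $\Lambda$ while $(f_1,f_2)\ne(0,0)$, so $(\Gamma,\Lambda)$ is not a HUP.

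The main obstacle, in both directions, is the precise combinatorial and topological role of $\pi_1^a,\pi_1^b,\pi_1^c$: one must show that membership in $\widetilde\Lambda$ captures exactly the dichotomy ``fibre rich, or thin with a residue not extendable by a smooth unimodular function'' against ``thin fibre with a locally coherent residue'', and, in the converse, one must control the Fourier‑algebra norms so that $e^{-i\pi\eta_\xi}\chi$ really is the Fourier transform of an $L^1$ function. That quantitative step, together with this fine geometry of $\widetilde\Lambda$ — the same kind of structure that, for several parallel lines later in the paper, becomes an interlacing phenomenon for trigonometric polynomials — is where the real work lies.
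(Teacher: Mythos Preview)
This theorem is not proved in the present paper: it is quoted from \cite{HR}, and the paper explicitly declines even to spell out the notations $\pi_1^a,\pi_1^b,\pi_1^c$. So there is no ``paper's own proof'' of Theorem~\ref{th9} to compare against. What the paper does prove is the generalization Theorem~\ref{th17} for $n+2$ parallel lines, and your two‑line sketch should be measured against the $n=0$ specialization of that argument.

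Against that template your skeleton is right---fibrewise linear algebra for sufficiency, an explicit nonzero $\mu$ supported in a gap interval for necessity---but there is a genuine gap in how you interpret $\pi_1^c$. You treat it as ``the fibre admits a \emph{smooth} (or continuous) choice of phase $\xi\mapsto e^{-i\pi\eta_\xi}$''. In \cite{HR}, and in the paper's sets $\Pi^{1^\ast}(\Lambda)$, the defining condition is that this phase lies \emph{locally in the Fourier algebra} $A(\mathbb R)=\mathcal F L^1(\mathbb R)$, i.e.\ belongs to $L^{E,\xi}_{\mathrm{loc}}$. These are not the same, and the mismatch breaks both halves of your argument. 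In the forward direction, mere continuity of $Z=-\widehat{f_1}/\widehat{f_2}$ does not place $\xi$ in $\pi_1^c$; what does is Wiener's lemma (Lemma~\ref{lemma4}): since $\widehat{f_2}\in A(\mathbb R)$ and $\widehat{f_2}(\xi)\neq0$, the quotient $Z$ is locally in $A(\mathbb R)$, and \emph{that} is the membership criterion. In the converse, you cannot in general extract a smooth selection $\eta_\xi$ from $I\cap\pi_1^b(\Lambda)\subset\pi_1^c(\Lambda)$; what the hypothesis actually hands you is $\varphi\in L^1(\mathbb R)$ with $e^{-i\pi\eta_\xi}=\hat\varphi$ on a subinterval, and the counterexample is built by convolution, $f_1=f_2\ast\varphi$ with $\widehat{f_2}$ an $A(\mathbb R)$‑bump (compare the constructions in Lemma~\ref{lemma8}).

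In short: replace ``continuous/smooth phase'' by ``phase locally in $\mathcal F L^1$'', invoke Wiener's lemma rather than continuity in the sufficiency step, and build the counterexample by convolution with the $L^1$ function supplied by the definition of $\pi_1^c$. With those fixes your outline becomes exactly the $n=0$ case of the paper's proof of Theorem~\ref{th17}.
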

Here we avoid mentioning the notations appeared in (\ref{exp24}) as they are bit involved,
however, we have written down the same notations as in the article \cite{HR}. Though, their
main features can be perceived in Section \ref{section3}.
\begin{theorem}\cite{HR}\label{th18}
Let $\Gamma$ be the hyperbola $x_{1}x_{2}=1$ and $\Lambda_{\alpha,\beta}$ a lattice-cross
defined by
\[\Lambda_{\alpha,\beta}=\left(\alpha\mathbb Z\times\{0\}\right)\cup\left(\{0\}\times\beta\mathbb Z\right),\]
where $\alpha, \beta$ are positive reals. Then $\left(\Gamma,\Lambda_{\alpha,\beta}\right)$ is a Heisenberg
uniqueness pair if and only if $\alpha\beta\leq1$.
\end{theorem}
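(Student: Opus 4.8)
The plan is to transfer the problem to a one–dimensional Fourier/ergodic statement. Parametrise the hyperbola $x_1x_2=1$ by $t\mapsto(t,t^{-1})$, $t\in\mathbb R\setminus\{0\}$ (both branches at once), and note that, since $\mu\in X(\Gamma)$ is absolutely continuous with respect to arc length, it has the form $d\mu=f(t)\,dt$ with $f\in L^1(\mathbb R)$. On the two axes of the cross one gets
\[\hat\mu(\alpha n,0)=\int_{\mathbb R}e^{-i\pi\alpha n t}f(t)\,dt,\qquad \hat\mu(0,\beta m)=\int_{\mathbb R}e^{-i\pi\beta m t}\,g(t)\,dt,\]
the second integral being obtained from $\int_{\mathbb R}e^{-i\pi\beta m s^{-1}}f(s)\,ds$ by the substitution $s=1/t$, which introduces $g(t)=t^{-2}f(t^{-1})\in L^1(\mathbb R)$ (the density of the same $\mu$ in the coordinate $x_2$). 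Reading $\{e^{-i\pi\alpha nt}\}_n$ and $\{e^{-i\pi\beta mt}\}_m$ as the Fourier systems of the circles $\mathbb R/(2\alpha^{-1}\mathbb Z)$ and $\mathbb R/(2\beta^{-1}\mathbb Z)$, the hypothesis $\hat\mu|_{\Lambda_{\alpha,\beta}}=0$ becomes the pair of periodisation identities
\[\sum_{k\in\mathbb Z}f\!\left(t+\tfrac{2k}{\alpha}\right)=0=\sum_{k\in\mathbb Z}g\!\left(t+\tfrac{2k}{\beta}\right)\qquad\text{for a.e. }t;\]
dually, $(\Gamma,\Lambda_{\alpha,\beta})$ is a HUP precisely when the exponential system $\{e^{-i\pi\alpha nt}\}_n\cup\{e^{-i\pi\beta m/t}\}_m$ is weak-$\ast$ dense in $L^\infty(\mathbb R)$. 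Next I would normalise: by the invariance property (ii) applied to the diagonal map $T(x_1,x_2)=(\lambda x_1,\lambda^{-1}x_2)$, which fixes $\Gamma$ and sends $\Lambda_{\alpha,\beta}$ to $\Lambda_{\lambda\alpha,\lambda^{-1}\beta}$, only the product $\alpha\beta$ matters, so I may assume $\alpha=\beta$ and set $P=2/\alpha$; then $\alpha\beta\le1$ is the same as $P\ge2$. The two periodisation identities now involve only the translation $t\mapsto t+P$ and the branch-exchanging involution $t\mapsto1/t$ (which interchanges $f$ and $g$); composing the associated folding operators produces the transfer (Perron--Frobenius) operator of a piecewise-M\"obius, Gauss-type map $\mathcal G$ on an interval $I=I(P)$, and, through the weak-$\ast$ duality above, $(\Gamma,\Lambda_{\alpha,\beta})$ fails to be a HUP if and only if $\mathcal G$ is not ergodic on $I$.

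For the sufficiency ($\alpha\beta\le1$, i.e.\ $P\ge2$) one must show $\mathcal G$ is ergodic: the overlap forced by such a long period makes $\mathcal G$ essentially the classical Gauss continued-fraction map, whose ergodicity with respect to the Gauss measure is the key external input. Ergodicity of $\mathcal G$ then leaves no room for a nonzero $L^1$ fixed density compatible with the periodisation constraint (the $\mathcal G$-invariant density is not periodised away), forcing $\mu=0$. I expect this to be the main obstacle of the argument: one has to identify the two periodisation identities with genuine $\mathcal G$-invariance, controlling the $L^1$-convergence of the folded series and keeping track of both branches at once (where a Wiener-type Tauberian argument is helpful), and then invoke or re-derive the ergodicity. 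Among the cases, the borderline $\alpha\beta=1$ is the genuinely delicate one, since for $\alpha\beta<1$ the strict inequality leaves extra room and supplies a uniform contraction that one can exploit more directly; the unified dynamical-systems treatment of Jaming--Kellay \cite{JK} offers an alternative route to this step.

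For the necessity ($\alpha\beta>1$, i.e.\ $P<2$) one must show $\mathcal G$ is not ergodic: the period is now too short for $\mathcal G$ to be surjective onto all of $I$, so $\mathcal G$ possesses a proper invariant subset and hence a nonconstant bounded invariant function supported there. Transplanting this function back through the involution and the folding produces a nonzero $f\in L^1(\mathbb R)$ satisfying both periodisation identities, that is, a nonzero $\mu\in X(\Gamma)$ with $\hat\mu|_{\Lambda_{\alpha,\beta}}=0$, so $(\Gamma,\Lambda_{\alpha,\beta})$ is not a HUP. What remains is the routine but slightly fussy verification that the density so constructed genuinely lies in $L^1$ and annihilates the entire lattice-cross.
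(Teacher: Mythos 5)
First, a point of reference: the paper you are working from does not prove this statement at all --- Theorem \ref{th18} is quoted from Hedenmalm and Montes-Rodr\'iguez \cite{HR} as background, so there is no in-paper proof to compare against; your outline is in essence a reconstruction of the strategy of \cite{HR}. The parts you do carry out are correct: the reduction of $\hat\mu|_{\Lambda_{\alpha,\beta}}=0$ to the two periodisation identities for $f$ and $g(t)=t^{-2}f(1/t)$ is sound (the exponentials $e^{-i\pi\alpha nt}$ are the characters of $\mathbb R/(2\alpha^{-1})\mathbb Z$, the periodised function lies in $L^{1}$ of that circle, and vanishing of all its Fourier coefficients forces it to vanish a.e.), and the normalisation via $T=\mathrm{diag}(\lambda,\lambda^{-1})$, which fixes the hyperbola and rescales the cross, correctly shows that only the product $\alpha\beta$ matters.

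As a proof, however, the proposal has genuine gaps, and they sit exactly where the real work is. (1) The claim that the pair of periodisation identities is equivalent to invariance of a density under a single Gauss-type interval map $\mathcal G$, and that failure of the HUP is equivalent to non-ergodicity of $\mathcal G$, is asserted rather than established; in \cite{HR} this passage occupies most of the argument and requires constructing the two folding/transfer operators, controlling the $L^{1}$-convergence of the folded series on both branches, and then a separate uniqueness step --- ergodicity alone does not immediately ``leave no room'' for a nonzero signed $L^{1}$ function satisfying a linear constraint; one needs the uniqueness of the absolutely continuous invariant density and must check that this density is incompatible with the vanishing periodisation. (2) The ergodicity input is not the classical Gauss map off the shelf for every $P\ge2$: one needs ergodicity for a one-parameter family of Gauss-type maps, with the continued-fraction dynamics appearing only at the borderline $P=2$, and your remark that $\alpha\beta<1$ supplies a ``uniform contraction'' is not substantiated (nor does that case reduce to $\alpha\beta=1$ by a simple inclusion of lattice-crosses). (3) In the necessity direction, ``$\mathcal G$ is not surjective, hence has a proper invariant subset, hence transplant back'' is the right intuition, but the entire content there is the explicit construction of a nonzero $f\in L^{1}(\mathbb R)$ annihilating both periodisations simultaneously, together with the verification of integrability; none of this is carried out. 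In short, you have correctly identified the road map of \cite{HR} (and rightly note \cite{JK} as an alternative route), but the sketch defers precisely the steps that constitute the proof.
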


For $\xi\in\Lambda,$ defining a function $e_\xi$ on $\Gamma$ by $e_{\xi}(x)=e^{i\pi x\cdot\xi}.$
As a dual problem to Theorem \ref{th18}, Hedenmalm and Montes-Rodr\'iguez \cite{HR} have proved
the following density result which in turn solve the one-dimensional Klein-Gordon equation.
\begin{theorem}\cite{HR}
The pair  $(\Gamma,\Lambda)$ is a Heisenberg uniqueness pair  if and only if
the set $\{e_{\xi}:~\xi\in\Lambda\}$ is a weak$^\ast$ dense subspace of
$L^{\infty}(\Gamma).$
\end{theorem}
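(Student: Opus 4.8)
The plan is to recognize this as a purely functional‑analytic \emph{bipolarity} statement for the canonical dual pair $\bigl(L^1(\Gamma),L^\infty(\Gamma)\bigr)$, where $\Gamma$ is equipped with its arc length measure $ds$. First I would make the identification $X(\Gamma)\cong L^1(\Gamma,ds)$ precise: every $\mu\in X(\Gamma)$ is a finite complex Borel measure on $\Gamma$ that is absolutely continuous with respect to $ds$, so the Radon--Nikodym theorem furnishes a density $\varphi_\mu=d\mu/ds\in L^1(\Gamma,ds)$, and $\mu\mapsto\varphi_\mu$ is a linear bijection of $X(\Gamma)$ onto $L^1(\Gamma,ds)$ which is isometric for the total‑variation and $L^1$ norms. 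Since $\Gamma$ is a smooth (albeit non‑compact) curve, $ds$ is $\sigma$‑finite, hence the dual of $L^1(\Gamma,ds)$ is $L^\infty(\Gamma,ds)$ and the corresponding weak$^\ast$ topology is $\sigma\bigl(L^\infty,L^1\bigr)$; note also that each $e_\xi$ lies in $L^\infty(\Gamma)$ because $|e_\xi|\equiv 1$ on $\Gamma$.

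Next I would translate the vanishing condition $\hat\mu|_\Lambda=0$ into an orthogonality relation for this pairing. Writing $\langle g,\mu\rangle=\int_\Gamma g\,d\mu=\int_\Gamma g\,\varphi_\mu\,ds$ for $g\in L^\infty(\Gamma)$, one has, directly from the definition of the Fourier transform, $\hat\mu(\xi)=\int_\Gamma e^{-i\pi x\cdot\xi}\,d\mu(x)=\langle e_{-\xi},\mu\rangle$. Therefore $\hat\mu|_\Lambda=0$ says exactly that $\varphi_\mu$ annihilates every $e_{-\xi}$ with $\xi\in\Lambda$, that is, $\varphi_\mu$ belongs to the pre‑annihilator ${}^{\perp}E_\Lambda$ of the \emph{linear span} $E_\Lambda:=\operatorname{span}\{e_{-\xi}:\xi\in\Lambda\}$. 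For the symmetric sets $\Lambda$ relevant here (in particular the lattice‑cross of Theorem \ref{th18}, and more generally whenever $\Lambda=-\Lambda$) one has $E_\Lambda=\operatorname{span}\{e_\xi:\xi\in\Lambda\}$, which is the subspace named in the statement. Consequently $(\Gamma,\Lambda)$ is a Heisenberg uniqueness pair if and only if ${}^{\perp}E_\Lambda=\{0\}$.

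Finally I would invoke the bipolar theorem for the dual pair $\bigl(L^1(\Gamma),L^\infty(\Gamma)\bigr)$: for any subspace $E\subseteq L^\infty(\Gamma)$, its weak$^\ast$ closure equals the double annihilator $\bigl({}^{\perp}E\bigr)^{\perp}$. If ${}^{\perp}E_\Lambda=\{0\}$, then $\overline{E_\Lambda}^{\,w\ast}=\{0\}^{\perp}=L^\infty(\Gamma)$, so $E_\Lambda$ is weak$^\ast$ dense. Conversely, if $E_\Lambda$ is weak$^\ast$ dense, then ${}^{\perp}E_\Lambda={}^{\perp}\bigl(\overline{E_\Lambda}^{\,w\ast}\bigr)={}^{\perp}L^\infty(\Gamma)=\{0\}$, the last equality because the $L^1$--$L^\infty$ pairing is nondegenerate. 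Combining this equivalence with the reduction of the previous paragraph yields the theorem.

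I do not expect a deep obstacle: the content is Hahn--Banach separation in the locally convex space $\bigl(L^\infty(\Gamma),\sigma(L^\infty,L^1)\bigr)$, whose continuous dual is precisely $L^1(\Gamma)$. The points that need genuine care are bookkeeping ones — verifying that $\mu\mapsto d\mu/ds$ is \emph{onto} $L^1(\Gamma,ds)$ (this is where absolute continuity is used), checking that $ds$ is $\sigma$‑finite so that $\bigl(L^1(\Gamma)\bigr)^{\ast}=L^\infty(\Gamma)$ really holds on the unbounded curve $\Gamma$, handling the harmless reflection $\xi\mapsto-\xi$, and respecting that ``weak$^\ast$ dense subspace'' refers to the closed linear span of $\{e_\xi:\xi\in\Lambda\}$ rather than to that set itself.
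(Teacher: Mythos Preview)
The paper does not supply a proof of this theorem at all: it is quoted from \cite{HR} and immediately followed by the next cited result. So there is no ``paper's own proof'' to compare against, and your duality argument via the bipolar theorem is the standard route and is essentially correct.

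One point deserves tightening. You reduce $\hat\mu(\xi)=0$ to the orthogonality $\langle e_{-\xi},\varphi_\mu\rangle=0$ and then pass from $\{e_{-\xi}:\xi\in\Lambda\}$ to $\{e_\xi:\xi\in\Lambda\}$ by assuming $\Lambda=-\Lambda$. That hypothesis is not part of the statement, and invoking it only ``for the symmetric sets relevant here'' leaves the general assertion unproved. The clean fix needs no symmetry: since $\overline{e_{-\xi}}=e_\xi$, one has $\hat\mu(\xi)=\int_\Gamma \overline{e_\xi}\,\varphi_\mu\,ds=\overline{\int_\Gamma e_\xi\,\overline{\varphi_\mu}\,ds}$, so $\hat\mu|_\Lambda=0$ is equivalent to $\overline{\varphi_\mu}\in{}^{\perp}\operatorname{span}\{e_\xi:\xi\in\Lambda\}$. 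Because $\varphi\mapsto\overline{\varphi}$ is a bijection of $L^1(\Gamma)$ onto itself, the HUP condition is exactly ${}^{\perp}E_\Lambda=\{0\}$ with $E_\Lambda=\operatorname{span}\{e_\xi:\xi\in\Lambda\}$, and your bipolar argument then applies verbatim. With that adjustment the proof is complete and general.
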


Let $\Gamma$ denote a system of three parallel lines in the plane that can be
expressed as $\Gamma=\mathbb R\times \{0,1,p\},$ for some $p\in\mathbb N$ with
$p\geq2.$ The following characterization for the Heisenberg uniqueness pairs
corresponding to the above mentioned three parallel lines has been given by
D.B. Babot \cite{Ba}.

\begin{theorem}\cite{Ba}\label{th3}
Let $\Gamma= \mathbb R\times \{0,1,p\}$ and $\Lambda\subset\mathbb R^2$ a closed
set which is $2$-periodic with respect to the second variable. Then
$\left(\Gamma,\Lambda\right)$ is a HUP if and only if the set
\begin{equation}\label{exp23}
\widetilde\Lambda= {\Pi^{3}(\Lambda)}\cup\left[{{\Pi^{2}(\Lambda)}\smallsetminus{\Pi^{2^\ast}(\Lambda)}}\right]
\cup\left[{{\Pi^{1}(\Lambda)}\smallsetminus{\Pi^{1^\ast}(\Lambda)}}\right]
\end{equation}
is dense in $\mathbb R$.
\end{theorem}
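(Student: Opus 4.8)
The plan is to reduce the statement, slice by slice, to a question about common zeros of trinomials on the circle, to settle the non-degenerate part with a symmetric-function determinant identity, and to dispose of the degenerate part by a Wiener-type density argument. Concretely, write $\mu\in X(\Gamma)$ as $d\mu = f_0(x)\,dx\otimes\delta_0 + f_1(x)\,dx\otimes\delta_1 + f_p(x)\,dx\otimes\delta_p$ with $f_0,f_1,f_p\in L^1(\mathbb R)$, so that
\[
\hat\mu(\xi,\eta) = \hat f_0(\xi) + e^{-i\pi\eta}\hat f_1(\xi) + e^{-i\pi p\eta}\hat f_p(\xi).
\]
Since $p\in\mathbb N$ this is $2$-periodic in $\eta$, matching the periodicity of $\Lambda$, and $\hat f_0,\hat f_1,\hat f_p\in C_0(\mathbb R)$, so it suffices to prove $\hat f_0=\hat f_1=\hat f_p\equiv 0$. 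Putting $z=e^{-i\pi\eta}\in\mathbb T$, the hypothesis $\hat\mu|_\Lambda=0$ says exactly that for each $\xi$ the trinomial $q_\xi(z)=\hat f_0(\xi)+\hat f_1(\xi)\,z+\hat f_p(\xi)\,z^{p}$ vanishes on $S(\xi)=\{\,e^{-i\pi\eta}:(\xi,\eta)\in\Lambda\,\}\subset\mathbb T$, a closed set because $\Lambda$ is closed and $2$-periodic.

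For the direction ``$\widetilde\Lambda$ dense $\Rightarrow$ HUP'' I would work slice by slice. If $S(\xi)$ contains three distinct points $z_1,z_2,z_3$, the equations $q_\xi(z_k)=0$ form a linear system whose coefficient matrix has rows $(1,z_k,z_k^{p})$ and whose determinant factors as $\prod_{1\le j<k\le 3}(z_k-z_j)\cdot h_{p-2}(z_1,z_2,z_3)$, where $h_{p-2}$ is the complete homogeneous symmetric polynomial of degree $p-2$ (the Schur form of a generalized Vandermonde with exponents $0,1,p$). As the $z_k$ are distinct this vanishes precisely when $h_{p-2}(z_1,z_2,z_3)=0$, so if the slice of $\xi$ carries a \emph{nondegenerate} triple — essentially the content of $\xi\in\Pi^3(\Lambda)$ — one gets $\hat f_0(\xi)=\hat f_1(\xi)=\hat f_p(\xi)=0$ outright. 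With only two distinct slice points, one is left with a one-parameter family $(\hat f_0(\xi),\hat f_1(\xi),\hat f_p(\xi))=\lambda(\xi)\,v(\xi)$, where $v(\xi)$ is the cross product of the two rows and its entries are (monomial multiples of) symmetric polynomials in $z_1,z_2$; with one slice point, a single codimension-one constraint. The starred sets $\Pi^{2^\ast}(\Lambda)$, $\Pi^{1^\ast}(\Lambda)$ are designed to record the $\xi$ for which such a short configuration still \emph{fails} to force vanishing — which I expect to be exactly the $\xi$ where the surviving direction (resp.\ the single relation) is ``too regular'', e.g.\ where the trigonometric polynomials assembled from the $z_k$ interlace compatibly with the heights $0,1,p$. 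Proving this dichotomy cleanly — outside $\Pi^{j^\ast}$ a short configuration kills all three coefficients, inside it does not — is the technical heart, and I would isolate it as an algebraic lemma on common zeros on $\mathbb T$ of the family $\{a+bz+cz^{p}\}$.

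Granting that lemma, $\hat f_0(\xi)=\hat f_1(\xi)=\hat f_p(\xi)=0$ for every $\xi\in\widetilde\Lambda$, whence $\hat f_0=\hat f_1=\hat f_p\equiv 0$ by continuity and density, so $\mu=0$. The step that upgrades a surviving linear relation valid on a whole frequency interval, say $\hat f_0(\xi)=c(\xi)\hat f_1(\xi)$ with $c$ assembled from slice data, to the vanishing of $f_0$ and $f_1$ is where Wiener's Tauberian theorem (Wiener's lemma) is used: such a relation places $f_0$ in the closed translation-invariant subspace generated by $f_1$ twisted by the multiplier $c$, which collapses unless $c$ has one of a short list of special ``exponential--affine'' shapes — precisely the shapes ruled out by $\xi\in\widetilde\Lambda$. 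I expect the main obstacle to be proving that this global, Wiener-type obstruction to uniqueness coincides, as a subset of $\mathbb R$, with the local trinomial obstruction; that coincidence is exactly what makes $\widetilde\Lambda$ the right object.

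For the converse, if $\widetilde\Lambda$ is not dense pick an open interval $I$ disjoint from it and construct a nonzero $\mu$ whose $\hat f_0,\hat f_1,\hat f_p$ are smooth and supported in a small neighbourhood of $I$ — so that $f_0,f_1,f_p$ are Schwartz, in particular in $L^1$ — and such that, for $\xi$ in that neighbourhood, the vector $(\hat f_0(\xi),\hat f_1(\xi),\hat f_p(\xi))$ solves the homogeneous slice system over $S(\xi)$. That $I$ misses $\widetilde\Lambda$ is what guarantees the existence on $I$ of a nontrivial, sufficiently regular, $\xi$-dependent solution (a constant direction, an interlacing two-point configuration, and so on). The delicate points are the smooth dependence of the solution on $\xi$ across $I$ and its control at the endpoints of $I$ and at accumulation points of the slices of $\Lambda$; these are the mirror image of the algebraic lemma above, and I would establish the two in tandem.
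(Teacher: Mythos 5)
First, a point of orientation: the paper does not prove Theorem \ref{th3} at all — it is quoted from Babot \cite{Ba}, and the paper deliberately omits even the definitions of $\Pi^{1^\ast}(\Lambda)$ and $\Pi^{2^\ast}(\Lambda)$. The closest thing to a proof in this paper is the proof of the generalization, Theorem \ref{th17}, and your architecture (slice decomposition over $\xi$, generalized Vandermonde with exponents $0,1,p$ factoring through the complete homogeneous symmetric polynomial $h_{p-2}$, Wiener's lemma for the degenerate slices, and a counterexample construction on an interval missing $\widetilde\Lambda$) is indeed the same architecture. Your identification of the nondegeneracy condition with $h_{p-2}(z_1,z_2,z_3)\neq 0$ matches the paper's condition (\ref{exp12}) via identity (\ref{exp5}). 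So the skeleton is right.

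The gaps are exactly at the two places you flag. First, the dispensable sets are not what you guess. You describe $\Pi^{2^\ast},\Pi^{1^\ast}$ as the $\xi$ where the surviving direction is ``too regular,'' with uniqueness failing ``unless $c$ has one of a short list of special exponential--affine shapes.'' The actual definitions (see $\textbf{(A1)}$--$\textbf{(A3)}$ and $\bf{P_{1^\ast}}$, $\bf{P^{1}_{m^\ast}}$ in Section \ref{section3}) are Banach-algebra conditions: $\xi\in\Pi^{2^\ast}$ iff the elementary symmetric functions of the two slice points locally agree with Fourier transforms of $L^1$ functions on a neighbourhood intersected with $\Pi^2(\Lambda)$, and $\xi\in\Pi^{1^\ast}$ iff $a_0=e^{\pi i\eta_0}$ locally satisfies a monic polynomial with such coefficients. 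There is no ``short list of shapes''; the obstruction is membership in a localized Wiener algebra, and the ``technical heart'' is consequently not an algebraic lemma about common zeros of trinomials on $\mathbb T$ but a Tauberian one (Lemma \ref{lemma4}). Moreover, in the sufficiency direction Wiener's lemma applied to $\hat f_0+a_j\hat f_1+a_j^{p}\hat f_p=0$ naturally yields the degree-$p$ relation for $a_0,a_1$, not the quadratic one defining $\Pi^{2^\ast}$; descending from one to the other is precisely the issue behind the paper's Lemmas \ref{lemma99}--\ref{lemma100} and the reason Theorem \ref{th17} only gives separate necessary and sufficient conditions. For the three-line theorem to be a clean iff, this identification must be made, and your sketch does not address it. Second, in the converse your plan to choose $\hat f_0,\hat f_1,\hat f_p$ smooth and ``smoothly depending on $\xi$'' solving the slice system cannot work as stated: the slices $S(\xi)$ of a general closed $2$-periodic $\Lambda$ need not vary continuously, their cardinality can jump, and the two dispensable sets are totally disconnected and may interlace densely inside $I$. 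The working construction takes $f_k=f_p\ast\varphi_k$ with $\varphi_k$ the $L^1$ witnesses supplied by the definition of the dispensable set (so integrability and the slice identities come for free on $I_{\xi_0}\cap\Pi^m(\Lambda)$), and the genuine difficulty — handled by the analogues of Lemmas \ref{lemma6}, \ref{lemma8}, \ref{lemma9} — is to pass from ``$I$ meets only dispensable sets'' to ``some subinterval is governed by a single dispensable set,'' using continuity of the symmetric functions $\tau_j$ and closedness of $\Lambda$. Without that interlacing argument the converse is incomplete.
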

For the notations appeared in (\ref{exp23}), we refer the article \cite{Ba},
as those notations are quite involved. However, they are the prototype of few
notations those will appear in Section \ref{section3}.

\smallskip

Afterward, the authors have given the following characterization for the Heisenberg
uniqueness pairs corresponding to a certain system of four parallel lines (see \cite{GR}).
Let $\Gamma$ denotes a system of four parallel straight lines in $\mathbb R^2$
which can be represented by $\Gamma=\mathbb R\times\{\alpha,\beta,\gamma,\delta\},$
where $\alpha<\beta<\gamma<\delta$ with $(\gamma-\alpha)/(\beta-\alpha)=2$ and
$(\delta-\alpha)/(\beta-\alpha)\in\mathbb N.$ By the invariance properties
of HUP, one can assume that $\Gamma=\mathbb R\times\{0,1,2,p\}$ for some $p\geq3.$

\begin{theorem}\cite{GR}\label{th4}
Let $\Gamma=\mathbb R\times\{0,1,2,p\}$ and $\Lambda\subset\mathbb R^2$ be a closed
set which is $2$-periodic with respect to the second variable. Suppose $\Pi(\Lambda)$
is dense in $\mathbb R.$ If $\left(\Gamma,\Lambda\right)$ is a Heisenberg uniqueness
pair, then the set
\[\widetilde{\Pi}(\Lambda)= {\Pi^4(\Lambda)}\bigcup\limits_{j=0}^2\left[{{\Pi^{(3-j)}(\Lambda)}\smallsetminus{\Pi^{{(3-j)}^\ast}(\Lambda)}}\right] \]
is dense in $\mathbb R.$ Conversely, if the set
\begin{equation}\label{exp2}
\widetilde{\Pi_p}(\Lambda)= {\Pi^4(\Lambda)}\bigcup\limits_{j=2}^3\left[{{\Pi^{j}(\Lambda)}\smallsetminus{\Pi^p_{{j}^\ast}(\Lambda)}}\right]\cup\left[{{\Pi^{1}(\Lambda)}\smallsetminus{\Pi^{{1}^\ast}(\Lambda)}}\right]
\end{equation}
is dense in $\mathbb R,$ then $\left(\Gamma,\Lambda\right)$ is a Heisenberg uniqueness pair.
\end{theorem}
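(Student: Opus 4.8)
The plan is to turn the two-dimensional hypothesis into a family of one-variable interpolation problems parametrized by the horizontal frequency, and then to play the resulting pointwise linear algebra against the regularity of Fourier transforms of $L^1$ functions. First I would decompose $\mu=\sum_{j\in\{0,1,2,p\}}f_j(x)\,dx\otimes\delta_j(y)$ with $f_j\in L^1(\mathbb R)$, so that $\hat\mu(\xi,\eta)=\sum_{j}e^{-i\pi j\eta}\hat f_j(\xi)$ is $2$-periodic in $\eta$; this periodicity is exactly why requiring $\Lambda$ to be $2$-periodic in the second variable is the right normalization. Fixing $\xi$ and writing $w=e^{-i\pi\eta}$ on the unit circle $\mathbb T$, the condition $\hat\mu\vert_\Lambda=0$ says that the lacunary polynomial $P_\xi(w)=\hat f_0(\xi)+\hat f_1(\xi)w+\hat f_2(\xi)w^2+\hat f_p(\xi)w^p$ vanishes at every node of $W_\xi:=\{e^{-i\pi\eta}:(\xi,\eta)\in\Lambda\}\subset\mathbb T$. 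Since each $\hat f_j$ is continuous, it suffices to prove that $(\hat f_0,\hat f_1,\hat f_2,\hat f_p)$ vanishes on a dense subset of $\mathbb R$, for then $f_j=0$ for all $j$ and $\mu=0$. Evaluation at the nodes of $W_\xi$ is a generalized Vandermonde with exponent set $\{0,1,2,p\}$; when $W_\xi$ contains four nodes making the $4\times4$ matrix nonsingular --- this is the content of $\xi\in\Pi^4(\Lambda)$ --- all four $\hat f_j(\xi)$ vanish, while with only three (respectively two, one) usable nodes the coefficient vector survives up to a scalar (respectively a two- or a three-dimensional ambiguity), the surviving direction being given, via Cramer's rule for these lacunary Vandermondes, by complete homogeneous symmetric polynomials in the nodes. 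The node configurations on which even this reduced description degenerates are what the decorated strata $\Pi^{k^\ast}(\Lambda)$ and their $p$-dependent refinements $\Pi^p_{k^\ast}(\Lambda)$ pick out, and $\Pi^p_{k^\ast}(\Lambda)\supseteq\Pi^{k^\ast}(\Lambda)$, so $\widetilde{\Pi_p}(\Lambda)\subseteq\widetilde\Pi(\Lambda)$ --- consistent with a sufficient condition being stronger than a necessary one.

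For the sufficiency direction, assume $\widetilde{\Pi_p}(\Lambda)$ dense and $\hat\mu\vert_\Lambda=0$. On $\Pi^4(\Lambda)$ the vanishing of all $\hat f_j$ is immediate. The real work is on $\Pi^j(\Lambda)\smallsetminus\Pi^p_{j^\ast}(\Lambda)$ for $j=2,3$ and on $\Pi^1(\Lambda)\smallsetminus\Pi^{1^\ast}(\Lambda)$, where the pointwise data pins the coefficient vector only to a line $c(\xi)\,v(\xi)$, with $v(\xi)$ the symmetric-polynomial vector built from $W_\xi$; the task is to force $c(\xi)=0$. Here I would compare the relation at $\xi$ with those at nearby $\xi'$: because $\Lambda$ is closed, limits of nodes are again nodes, so either $W_{\xi'}$ carries strictly more information (pushing $\xi'$, and hence $\xi$ in the limit, into the already-settled regime) or nodes collide and the interpolation conditions become confluent derivative conditions; in every case the product $c(\xi')v(\xi')=(\hat f_j(\xi'))_j$ is continuous in $\xi'$, and matching these continuous data against the symmetric-polynomial constraints produces an identity between two trigonometric polynomials whose zero sets are forced to interlace. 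Such an interlacing identity is untenable unless $c\equiv0$ on the stratum. Therefore $(\hat f_0,\dots,\hat f_p)$ vanishes on the dense set $\widetilde{\Pi_p}(\Lambda)$, so $\mu=0$ and $(\Gamma,\Lambda)$ is a HUP.

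For the necessity direction I would argue contrapositively. Assume $\Pi(\Lambda)$ is dense but $\widetilde\Pi(\Lambda)$ is not, and fix an open interval $I$ disjoint from $\widetilde\Pi(\Lambda)$. Then every $\xi\in I$ satisfies $\xi\notin\Pi^4(\Lambda)$ and $\xi\notin\Pi^{3-j}(\Lambda)\smallsetminus\Pi^{(3-j)^\ast}(\Lambda)$ for $j=0,1,2$, so by the stratum description the pointwise solution space over $I$ is at least one-dimensional and varies regularly enough in $\xi$ to admit a nonzero smooth selection $\xi\mapsto(\hat f_0(\xi),\dots,\hat f_p(\xi))$ supported in $I$; inverse Fourier transform produces a nonzero $\mu\in X(\Gamma)$ with $\hat\mu\vert_\Lambda=0$, so $(\Gamma,\Lambda)$ is not a HUP. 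The step I expect to be the main obstacle is the interlacing argument in the sufficiency part: with four exponents $\{0,1,2,p\}$ and $p$ arbitrary, the symmetric-polynomial constraints are intricate enough to require a delicate zero-counting argument for the associated trigonometric polynomials, yet not rigid enough to collapse $\widetilde{\Pi_p}(\Lambda)$ onto $\widetilde\Pi(\Lambda)$ --- which is precisely why the theorem records a sufficient and a necessary condition rather than a single characterization, and why pinpointing exactly when the interlacing fails is the heart of the problem.
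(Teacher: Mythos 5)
Your overall architecture (decompose $\mu$ into $f_j\in L^1$, view $\hat\mu\vert_\Lambda=0$ as a lacunary Vandermonde system at the nodes $W_\xi\subset\mathbb T$, stratify $\Pi(\Lambda)$ by the number of nodes, and express the surviving directions through complete homogeneous symmetric polynomials) matches the paper's setup, and your observation that $\Pi^{k^\ast}(\Lambda)\subseteq\Pi^p_{k^\ast}(\Lambda)$ is exactly the paper's Lemmas on inclusions of dispensable sets. But there are two genuine gaps. First, in the sufficiency direction you have misplaced the mechanism: on $\Pi^{m}(\Lambda)\smallsetminus\Pi^p_{m^\ast}(\Lambda)$ the conclusion $\hat f_j(\xi)=0$ does not come from any ``interlacing identity between trigonometric polynomials.'' It comes from Wiener's lemma on absolutely convergent Fourier series: if $\hat f_p(\xi)\neq 0$, then $1/\hat f_p$ is locally the Fourier transform of an $L^1$ function, so by Cramer's rule the symmetric functions $\tau_j$ of the nodes are locally Fourier transforms of $L^1$ functions, which by definition places $\xi$ in the dispensable set $\Pi^p_{m^\ast}(\Lambda)$ --- contradicting the choice of $\xi$ outside it; one then iterates downward through $\hat f_2,\hat f_1,\hat f_0$ using the chain $\Pi^{m^\ast}\subset\Pi^{m+1}_{m^\ast}\subset\cdots\subset\Pi^p_{m^\ast}$. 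Your proposal never invokes Wiener's lemma, yet the dispensable sets are \emph{defined} by local representability via Fourier transforms of $L^1$ functions, so without it the step ``force $c(\xi)=0$'' does not close; the continuity-and-zero-counting argument you sketch in its place is not substantiated and is not needed.

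Second, in the necessity direction your phrase ``varies regularly enough in $\xi$ to admit a nonzero smooth selection'' hides the actual difficulty, which is where the interlacing phenomenon genuinely lives. If an interval $I_o$ misses $\widetilde\Pi(\Lambda)$, then $\Pi(\Lambda)\cap I_o$ lies in the union of the dispensable sets $\Pi^{1^\ast}(\Lambda),\ldots,\Pi^{(n+1)^\ast}(\Lambda)$, but the witnessing functions $\varphi_k$ with $\tau_k=\hat\varphi_k$ exist only \emph{locally} and \emph{separately on each dispensable stratum}; a counterexample measure of the form $f_k=f_{p}\ast\varphi_k$ with $\operatorname{supp}\hat f_p$ in a small interval can be built only when some subinterval meets a \emph{single} dispensable set. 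The paper therefore needs two further lemmas: one showing that density of a dispensable set $\Pi^{m^\ast}(\Lambda)$ in an interval forces a subinterval into $\Pi^{m^\ast}(\Lambda)\cup\bigcup_{j>m}\Pi^j(\Lambda)$ (via continuity of the discriminant $\rho=(\det\mathcal M)^2$ and closedness of $\Lambda$), and an induction ruling out that an interval is covered by a union of two or more interlacing dispensable sets. Your proposal skips this entirely, and the smooth-selection shortcut would fail precisely when the dispensable sets are totally disconnected and interlace --- which is the situation the theorem is designed to handle.
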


For the notations appeared in (\ref{exp2}), we refer the article \cite{GR}.
However, the nature of their occurrence can be understood in the beginning of
Section \ref{section3} when we formulate the Heisenberg uniqueness pairs
for finitely many parallel lines.

\begin{remark}\label{rk3}
The above problem becomes immensely deeper when we consider the $p$-th line at large
that we shall elaborate in Section \ref{section3}, Remark \ref{rk4}.
\end{remark}

\section{Some individual results for Heisenberg uniqueness pairs}
In this section, we will explore some of the individual results for the
Heisenberg uniqueness pairs corresponding to the cross and certain exponential
curves in the plane as well as exponential surfaces in the Euclidean spaces.

\bigskip
Let $\Gamma=\left(\mathbb{R}\times F_n\right)\cup \left(F_n\times\mathbb{R}\right),$
where $F_n=\{0,1,\ldots,n\}.$ Consider a subset $\mathcal C\subset X(\Gamma)$ such
that for any $\mu\in\mathcal C,$ there exist $f_j\in L^1(\mathbb R);~j\in F_n$
satisfying
\begin{equation}\label{exp123}
d\mu(x,y)=\sum\limits_{k=0}^n\left(f_k(x)dxd\delta_k(y)-f_k(y)d\delta_k(x)dy\right),
\end{equation}
where $\delta_k$ denotes the point mass measure at $k.$ Then the pair $(\Gamma,\Lambda)$
is said to be a $\mathcal C$-HUP if any measure $\mu\in \mathcal C$
satisfies $\hat\mu\vert_\Lambda=0,$ implies $\mu$ is identically zero.

\smallskip

For $n=0,$ let $\alpha_j\in\mathbb R;~j=1,2$ be independent over $\mathbb{Q},$ the set of
rational numbers. Consider the lines $L_{\alpha_j}=\{(\xi,\eta)\in\mathbb{R}^2:\eta-\xi=\alpha_j\};~j=1,2.$
\begin{theorem}
Let $\Gamma=\left(\mathbb{R}\times\{0\}\right)\cup\left(\{0\}\times\mathbb{R}\right)$ and $\Lambda=L_{\alpha_1}
\cup L_{\alpha_2},$ where $\alpha_j;~ j=1,2$ are independent over $\mathbb{Q}.$ Then
$(\Gamma,\Lambda)$ is a $\mathcal C$-HUP.
\end{theorem}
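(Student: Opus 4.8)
The plan is to reduce the vanishing hypothesis to a periodicity statement for a single Fourier transform. First I would compute $\hat\mu$ for a measure $\mu\in\mathcal C$ in the case $n=0$. Writing $f=f_0\in L^1(\mathbb R)$ and parametrizing the two branches of the cross by $x\mapsto(x,0)$ and $y\mapsto(0,y)$, the defining relation \eqref{exp123} gives
\begin{equation*}
\hat\mu(\xi,\eta)=\int_{\mathbb R}e^{-i\pi x\xi}f(x)\,dx-\int_{\mathbb R}e^{-i\pi y\eta}f(y)\,dy=\hat f(\xi)-\hat f(\eta),
\end{equation*}
so that $\hat\mu$ is governed entirely by the one-variable function $\hat f$, which is continuous and vanishes at infinity by the Riemann--Lebesgue lemma.

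Next I would unpack the hypothesis $\hat\mu\vert_\Lambda=0$. Since $L_{\alpha_j}=\{(t,t+\alpha_j):t\in\mathbb R\}$, restricting $\hat\mu$ to $L_{\alpha_1}\cup L_{\alpha_2}$ and letting the parameter $t$ range over $\mathbb R$ yields $\hat f(t)=\hat f(t+\alpha_1)$ and $\hat f(t)=\hat f(t+\alpha_2)$ for every $t\in\mathbb R$; that is, $\hat f$ admits both $\alpha_1$ and $\alpha_2$ as periods, hence every element of the additive group $\alpha_1\mathbb Z+\alpha_2\mathbb Z$ is a period of $\hat f$.

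Finally I would bring in the $\mathbb Q$-independence of $\alpha_1$ and $\alpha_2$. A subgroup of $\mathbb R$ is either cyclic or dense, and $\alpha_1\mathbb Z+\alpha_2\mathbb Z$ cannot be cyclic because $\alpha_1/\alpha_2\notin\mathbb Q$; therefore this group is dense in $\mathbb R$. A continuous function possessing a dense set of periods is constant, and a constant function vanishing at infinity is identically zero, so $\hat f\equiv0$. By Fourier uniqueness $f=0$ almost everywhere, whence $\mu=0$. Since the definition of a $\mathcal C$-HUP only requires testing measures in $\mathcal C$, this completes the argument.

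I do not expect a serious obstacle here; the proof is essentially a bookkeeping of the Fourier transform on the cross followed by the classical density dichotomy for subgroups of $\mathbb R$. The one place that merits care is the chain ``dense set of periods $+$ continuity $\Rightarrow$ constant'' and then ``constant $+$ decay at infinity $\Rightarrow$ zero'': this is precisely where the $L^1$ (rather than merely finite-measure) nature of the density $f$, equivalently the non-compactness of $\Gamma$, enters in an essential way.
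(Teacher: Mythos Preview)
Your proof is correct and follows essentially the same route as the paper: compute $\hat\mu(\xi,\eta)=\hat f(\xi)-\hat f(\eta)$, deduce that $\hat f$ has both $\alpha_1$ and $\alpha_2$ as periods, use the $\mathbb Q$-independence to force $\hat f$ to be constant, and conclude via Riemann--Lebesgue. The only cosmetic difference is that the paper invokes Kronecker's approximation theorem to produce arbitrarily small periods (phrased as a contradiction with a putative minimal period $p_o$), whereas you invoke the cyclic/dense dichotomy for closed subgroups of $\mathbb R$; these are equivalent formulations of the same density fact, and your phrasing is arguably cleaner since it avoids the implicit assumption that a nonconstant continuous periodic function has a smallest positive period.
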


\begin{proof}
For $\mu\in\mathcal C$ there exist $f\in L^{1}(\mathbb R)$ such that
\[d\mu(x,y)=f(x)dxd\delta_o(y)-f(y)d\delta_o(x)dy.\]
By taking the Fourier transform of both the sides, we get
\begin{eqnarray*}
\hat\mu(\xi,\eta) &=& \int_\Gamma e^{-i\pi(x\xi+y\eta)}d\mu(x,y)\\
&=& \int_\mathbb{R} e^{-i\pi x\xi}f(x)dx-
\int_\mathbb{R} e^{-i\pi y\eta}f(y)dy\\
&=& \hat f(\xi)-\hat f(\eta).
\end{eqnarray*}
Now, $\hat\mu|_{L_{\alpha_1}}=0$ implies that $\hat f(t)=\hat f(t+\alpha_1)$
for all $t\in\mathbb{R}.$ Using the invariance property $(ii),$ we can assume
that $\alpha_j>0;~ j=1,2.$ Hence, $\hat f$ is a periodic function with some
period $p_o>0.$ By the simple recursions we can derive that $\hat f(t)=\hat f(t+m\alpha_1)$
for all $m\in\mathbb Z$ and $t\in\mathbb R.$ Similarly, by the condition $\hat\mu|_{L_{\alpha_2}}=0,$
it follows that $\hat f(t)=\hat f(t+n\alpha_2)$ for all $n\in\mathbb Z$ and $t\in\mathbb R.$
Thus, we infer that
\[\hat f(t)=\hat f(t+m\alpha_1+n\alpha_2)\]
whenever, $m,n\in\mathbb Z$ and $t\in\mathbb R.$ By Kr\"{o}necker approximation
theorem (see \cite{ATM}), we can always find $m_o,n_o\in\mathbb{Z}$ such that $|m_o\alpha_1+n_o\alpha_2|<p_o,$ which
contradict the fact that $p_o$ is the period of $f.$  This, in turn, implies that
$\hat f$ must be a constant function. Thus, by Riemann-Lebesgue lemma $f=0.$
\end{proof}

\begin{remark}\label{rk1}
$(i).$ Let $\Gamma_o=\left(\mathbb R\times\{0\}\right)\cup\left(\{0\}\times\mathbb R\right)$ and
$\Lambda=\{(\xi,\eta)\in\mathbb{R}^2:\eta=\alpha\xi,~|\alpha|<1\}.$
Then by Riemann-Lebesgue lemma, it can be easily deduced that $(\Gamma_o,\Lambda)$
is a $\mathcal C$-HUP. On the other hand, if $\Lambda\subseteq\{(\xi,\eta)\in\mathbb R^2:\eta=\xi\},$
then $(\Gamma_o,\Lambda)$ is not a $\mathcal C$-HUP. Next, for $\Lambda=\{(\xi,\eta)\in\mathbb{R}^2:\eta=\xi^2\},$
one can easily verify that $(\Gamma_o,\Lambda)$ is a $\mathcal C$-HUP.

\smallskip

$(ii).$ Consider $\Gamma=\left(\mathbb{R}\times F_n\right)\cup\left(F_n\times\mathbb R\right)$
and $\Lambda=\{(\xi,\eta) \in\mathbb{R}^2:\eta=\xi\}.$ Then $(\Gamma,\Lambda)$ is not a
$\mathcal C$-HUP. For this, we can choose non-zero functions $f_j\in L^1(\mathbb R);~j\in F_n$
such that
\begin{equation}
\hat\mu(\xi,\eta)=\sum\limits_{k=0}^n \left(e^{-ik\pi\eta}\hat f_k(\xi)-e^{-ik\pi\xi}\hat f_k(\eta)\right)
\end{equation}
and hence $\hat\mu(\xi,\eta)=0,$ whenever $\xi=\eta.$ For $n=1,$ if $\Lambda$
is the $\xi$-axis, then $(\hat f_0+\hat f_1)(\xi)=\hat f_0(0)+e^{-i\pi\xi}\hat f_1(0)$
for all $\xi\in\mathbb R.$ By Riemann-Lebesgue lemma, it is enough to consider
$f_1=-f_0$ and $\hat f_0(0)=0.$ Thus, $(\Gamma,\Lambda)$ is not a $\mathcal C$-HUP.
\end{remark}

$(iii).$  In view of injectivity of the Fourier transform on $L^1(\mathbb R),$
for a pair $(\Gamma_o, \Lambda)$ to be a $\mathcal C$-HUP it is necessary that at least
one of the orthogonal projections $\pi_j(\Lambda);~j=1,2$ on the axes must be
dense in  $\mathbb R.$ It would be an interesting question to get a sufficient
condition for the $\mathcal C$-Heisenberg uniqueness pairs corresponding to $\Gamma_o.$

\smallskip

Next, we will work out some of the Heisenberg uniqueness pairs corresponding to
certain exponential curve and surfaces in the Euclidean spaces.
\smallskip

Let $\mu$ be a finite Borel measure having support on $\Gamma=\{(t, e^{t^2}): t\in\mathbb R\}$
which is absolutely continuous with respect to the arc length on $\Gamma.$ Then there exists
$f\in L^{1}(\mathbb R)$ such that $d\mu=g(t)dt,$ where $g(t)=f(t)\sqrt{1+4t^2e^{2t^{2}}}.$
Hence by finiteness of $\mu,$ it follows that $g\in L^{1}(\mathbb R)$ and
\begin{equation}\label{exp4}
\hat\mu{(x, y)}=\int_{\mathbb R}e^{- i\pi\left(xt + ye^{t^2}\right)}g(t)dt.
\end{equation}
Then $\hat\mu$ satisfies the PDE
\begin{equation}
\left(1+\mathcal T_x\right)\hat\mu=\alpha\partial_y\hat\mu
\end{equation}
in the distributional sense, where $\mathcal T_x=\sum\limits_{n=0}^\infty\left(\sum\limits_{k=1}^4\dfrac{\alpha^{4n+k}}{(4n+k)!}\partial_x^{4n+k}\right)$
and $\alpha=\frac{i}{\pi}.$

\begin{theorem}\label{th08}
Let $\Gamma=\left\{\left(t, \alpha(t)\right):~ t\in\mathbb R\right\},$ where
$\alpha :\mathbb R\rightarrow\mathbb R_{+}$ be defined by $\alpha(t)=e^{t^2}.$
\smallskip

$(i).$ Let $\Lambda$ be a straight line. Then $\left(\Gamma,\Lambda\right)$
is a Heisenberg uniqueness pair if and only if $\Lambda$ is parallel to the $x$-axis.
\smallskip

$(ii).$ Let $L_j;~j=1,2$ be two parallel lines which are not parallel to either of the axes.
Then $\left(\Gamma,L_1\cup L_2\right)$ is a Heisenberg uniqueness pair.
\end{theorem}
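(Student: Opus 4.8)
The plan is to reduce everything to a statement about the Fourier transform $\hat g$ of the $L^1$ density $g$ on the line, exactly as in the curve $\Gamma=\{(t,e^{t^2})\}$ set-up preceding the theorem. Writing a point of the line $\Lambda$ as $(x,y)=(x_0+as,\,y_0+bs)$, $s\in\mathbb R$, the condition $\hat\mu|_\Lambda=0$ becomes
\begin{equation}\label{prop-basic}
\int_{\mathbb R}e^{-i\pi\left(x_0 t+y_0 e^{t^2}\right)}\,e^{-i\pi s\left(at+b e^{t^2}\right)}g(t)\,dt=0\qquad\text{for all }s\in\mathbb R.
\end{equation}
Put $h(t)=e^{-i\pi(x_0 t+y_0 e^{t^2})}g(t)\in L^1(\mathbb R)$ and $\varphi(t)=at+be^{t^2}$. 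Then \eqref{prop-basic} says that the push-forward measure $\varphi_*(h\,dt)$ on $\mathbb R$ has identically vanishing Fourier transform, hence (injectivity of the Fourier transform on $L^1$, or on finite measures) $\varphi_*(h\,dt)=0$. So the whole problem is: for which $(a,b)$ does $\varphi_*(h\,dt)=0$ force $h=0$?

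For part $(i)$: if $\Lambda$ is parallel to the $x$-axis then $b=0$, $a\neq0$, so $\varphi(t)=at$ is a linear bijection of $\mathbb R$ and $\varphi_*(h\,dt)=0$ gives $h=0$ hence $g=0$. Conversely, if $\Lambda$ is not parallel to the $x$-axis then $b\neq0$; I claim $\varphi(t)=at+be^{t^2}$ is \emph{not} injective, in fact is two-to-one off a critical point because $\varphi'(t)=a+2bte^{t^2}$ has exactly one real zero $t_*$ (the function $t\mapsto 2bte^{t^2}$ is strictly monotone and onto $\mathbb R$), and $\varphi$ is strictly monotone on each of $(-\infty,t_*]$, $[t_*,\infty)$ with a common value at $\pm\infty$ of $+\infty$ or $-\infty$ according to $\mathrm{sgn}\,b$. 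Hence one can pick a small value interval on which $\varphi$ has two smooth inverse branches $\psi_1,\psi_2$, choose any nonzero $L^1$ bump $u$ supported there, and set $h=\left(u\circ\varphi\right)\cdot w$ where $w$ is chosen (supported near the two preimage intervals of that bump) so that the two branch-contributions $\sum_j (u\,w)(\psi_j(\cdot))|\psi_j'(\cdot)|$ cancel — concretely $w=\mathbf 1_{(-\infty,t_*)}-c(t)\mathbf 1_{(t_*,\infty)}$ with $c$ the Jacobian ratio — giving $\varphi_*(h\,dt)=0$ with $h\neq0$, and then undo the modulation to get a nonzero $g$. This exhibits a nonzero $\mu\in X(\Gamma)$ with $\hat\mu|_\Lambda=0$, so $(\Gamma,\Lambda)$ is not a HUP.

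For part $(ii)$: now $L_1\cup L_2$ are parallel, not parallel to either axis, say with common direction $(a,b)$, $a,b\neq0$, and base points giving phases $(x_0^{(1)},y_0^{(1)})$ and $(x_0^{(2)},y_0^{(2)})$ with $x_0^{(1)}-x_0^{(2)}=:\lambda$, $y_0^{(1)}-y_0^{(2)}=:\nu$ not both zero. Vanishing on $L_1$ gives $\varphi_*(h_1\,dt)=0$ and on $L_2$ gives $\varphi_*(h_2\,dt)=0$, where $h_j(t)=e^{-i\pi(x_0^{(j)}t+y_0^{(j)}e^{t^2})}g(t)$, so $h_1=e^{-i\pi(\lambda t+\nu e^{t^2})}h_2$. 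Thus $\varphi_*(h_2\,dt)=0$ and $\varphi_*\!\left(e^{-i\pi(\lambda t+\nu e^{t^2})}h_2\,dt\right)=0$. Reading the first equation through the two branches $\psi_1,\psi_2$ of $\varphi$: for a.e.\ value $v$,
\begin{equation}\label{prop-branch}
h_2(\psi_1(v))\,|\psi_1'(v)|+h_2(\psi_2(v))\,|\psi_2'(v)|=0,
\end{equation}
and the second equation gives the same relation with $h_2(\psi_j(v))$ replaced by $e^{-i\pi(\lambda\psi_j(v)+\nu e^{\psi_j(v)^2})}h_2(\psi_j(v))$. Subtracting, for a.e.\ $v$ with $h_2(\psi_1(v))\neq0$ we would need $e^{-i\pi(\lambda\psi_1(v)+\nu e^{\psi_1(v)^2})}=e^{-i\pi(\lambda\psi_2(v)+\nu e^{\psi_2(v)^2})}$, i.e. $\lambda(\psi_1(v)-\psi_2(v))+\nu(e^{\psi_1(v)^2}-e^{\psi_2(v)^2})\in2\mathbb Z$ for $v$ in a positive-measure set; but $v\mapsto\lambda(\psi_1(v)-\psi_2(v))+\nu(e^{\psi_1(v)^2}-e^{\psi_2(v)^2})$ is real-analytic and non-constant (its derivative is a nonzero analytic function — using $(\lambda,\nu)\neq(0,0)$ and $\psi_1\neq\psi_2$, together with the strict monotonicity of $te^{t^2}$ that already governed $\varphi'$), so it cannot take values in the discrete set $2\mathbb Z$ on a set of positive measure. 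Hence $h_2(\psi_1(v))=0$ a.e., then by \eqref{prop-branch} $h_2(\psi_2(v))=0$ a.e., so $h_2\equiv0$ on the range of the branches, and letting the value interval exhaust $\varphi(\mathbb R)$ gives $h_2=0$, hence $g=0$.

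\textbf{Main obstacle.} The delicate point is part $(ii)$: making rigorous the passage from the push-forward identities to the pointwise branch relation \eqref{prop-branch} (change of variables on each monotone piece, measurability of the branches, behaviour at the critical point $t_*$ and near $\pm\infty$ where $\varphi$ is not proper in the usual sense), and then proving the real-analytic function $\lambda(\psi_1-\psi_2)+\nu(e^{\psi_1^2}-e^{\psi_2^2})$ is genuinely non-constant for \emph{every} admissible $(a,b,\lambda,\nu)$ — this is where the specific convexity/growth of $e^{t^2}$ (equivalently, strict monotonicity and surjectivity of $t\mapsto 2bte^{t^2}$) is essential, and an analogous argument with $\alpha(t)$ replaced by a generic smooth function would fail. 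I would isolate this as a lemma: if $\alpha$ is strictly convex with $\alpha'$ a bijection of $\mathbb R$, then for $a,b\neq0$ the map $at+b\alpha(t)$ has a single critical point, two analytic inverse branches, and $v\mapsto \psi_1(v)-\psi_2(v)$ is non-constant; everything else is bookkeeping.
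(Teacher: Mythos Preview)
Your approach is correct and, while it ends in the same two-branch cancellation idea as the paper, it is packaged quite differently. The paper does not speak of push-forwards: it introduces the specific change of variables $h_c(t)=(t+\tfrac{1}{2c})^2+e^{t^2}-t^2$, reduces to an integral $\int_0^\infty e^{-i\pi cxv}\varphi(v)\,dv$, and then invokes the Hardy-space type Lemma~\ref{lemma3} (the $\log|\hat\varphi|/(1+x^2)$ criterion). This buys them a strictly stronger statement --- in Lemma~\ref{lemma105} and Proposition~\ref{prop125} vanishing on a set $E$ of positive Lebesgue measure along the line already forces the ``oddness'' conclusion --- whereas your argument, via injectivity of the Fourier transform on finite measures, needs vanishing on the whole line. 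For the theorem as stated your route is more elementary and perfectly adequate; the paper's route is heavier but yields the positive-measure refinement for free.

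One concrete point will sharpen your part~$(ii)$: the non-constancy of
\[
v\longmapsto \lambda\bigl(\psi_1(v)-\psi_2(v)\bigr)+\nu\bigl(e^{\psi_1(v)^2}-e^{\psi_2(v)^2}\bigr)
\]
does not need a separate analytic-derivative argument. Since $a\psi_j(v)+b\,e^{\psi_j(v)^2}=v$ for $j=1,2$, subtracting gives $e^{\psi_1^2}-e^{\psi_2^2}=-\tfrac{a}{b}(\psi_1-\psi_2)$, so the expression collapses to $\dfrac{b\lambda-a\nu}{b}\,(\psi_1-\psi_2)$. The scalar $b\lambda-a\nu$ is nonzero precisely because $L_1\neq L_2$ are parallel with direction $(a,b)$, and $\psi_1-\psi_2$ is non-constant (it tends to $0$ at the critical value and to $\pm\infty$ at the other end). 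This is exactly what the paper is using, in its coordinates, when it asserts that the factor $\bigl[e^{i\pi d\{\,\cdots\,\}}-1\bigr]$ in front of $\psi_c$ is a.e.\ nonzero. With this observation your ``main obstacle'' disappears and the proof closes cleanly.
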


In order to prove Theorem \ref{th08}, we need the following results. First,
we state a result which can be found in Havin and J\"{o}ricke \cite{HJ} at p. 36.
\begin{lemma}\cite{HJ}\label{lemma3}
Let $\varphi\in L^1[0,\infty).$ If $\int\limits_{\mathbb R}\log|\hat\varphi|\dfrac{dx}{1+x^2}=-\infty,$
then $\varphi=0.$
\end{lemma}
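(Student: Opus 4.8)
The plan is to recognize $\hat\varphi$ as the boundary trace of a bounded holomorphic function on a half-plane and then apply the classical fact that a nonzero Hardy-class function cannot have a divergent logarithmic integral against the Poisson weight; the stated implication is precisely the contrapositive of that fact. So the whole argument reduces to (a) producing the analytic extension from the one-sided support of $\varphi$, and (b) transferring the weighted log-integral on the line to an ordinary log-integral on the circle.

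First I would exploit the one-sided support. Since $\varphi\in L^1[0,\infty)$, the formula $F(z)=\int_0^\infty e^{-i\pi z t}\varphi(t)\,dt$ defines, for $z=x+iy$ with $y<0$, a holomorphic function on the lower half-plane $\mathbb C_-=\{\operatorname{Im}z<0\}$: indeed $|e^{-i\pi z t}|=e^{\pi y t}\le 1$ for $t\ge 0$ and $y<0$, so the integral converges absolutely, is bounded by $\|\varphi\|_1$, and may be differentiated under the integral sign. Hence $F\in H^\infty(\mathbb C_-)$ with $\|F\|_\infty\le\|\varphi\|_1$, and because $\varphi\in L^1$ its Fourier transform $\hat\varphi$ is continuous; by dominated convergence $F(x+iy)\to\hat\varphi(x)$ as $y\to 0^-$, so the nontangential boundary values of $F$ coincide with $\hat\varphi$ on $\mathbb R$.

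Next I would transfer the problem to the unit disk. Let $\psi:\mathbb D\to\mathbb C_-$ be the Cayley conformal map and set $g=F\circ\psi$, so that $g\in H^\infty(\mathbb D)$ and $|g(e^{i\theta})|=|\hat\varphi(x(\theta))|$, where $x(\theta)=\psi(e^{i\theta})\in\mathbb R$ traces the real line as $\theta$ runs over $[0,2\pi)$. A direct change of variables in the Cayley transform turns arc length on the circle into the Poisson weight on the line and yields
\[\frac{1}{2\pi}\int_0^{2\pi}\log|g(e^{i\theta})|\,d\theta=\frac{1}{\pi}\int_{\mathbb R}\log|\hat\varphi(x)|\,\frac{dx}{1+x^2}.\]
I then invoke the disk version of the statement: if $g\in H^\infty(\mathbb D)\subset H^1(\mathbb D)$ is not identically zero, then $\log|g|\in L^1(\partial\mathbb D)$ and in particular the left-hand side is finite. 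This follows from Jensen's inequality after factoring out any zero of $g$ at the origin (equivalently from the canonical inner-outer factorization, since $|g|$ equals the modulus of its outer part a.e.\ on $\partial\mathbb D$ and the log-modulus of an outer function is integrable).

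Combining the two facts finishes the argument: if $\int_{\mathbb R}\log|\hat\varphi|\,\frac{dx}{1+x^2}=-\infty$, the displayed identity forces $\int_0^{2\pi}\log|g|\,d\theta=-\infty$, which by the disk result is possible only when $g\equiv 0$; hence $F\equiv 0$, so $\hat\varphi\equiv 0$ on $\mathbb R$, and by injectivity of the Fourier transform on $L^1$ we conclude $\varphi=0$. The main point requiring care is not the Hardy-space input, which is standard, but the two identifications feeding it: that the nontangential limit of $F$ really is $\hat\varphi$, and that the Jacobian of the Cayley map produces exactly the weight $\tfrac{1}{1+x^2}$ with the right constant. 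I would therefore concentrate the rigor on these bookkeeping steps, after which the conclusion is immediate from the log-integrability of nonvanishing Hardy functions.
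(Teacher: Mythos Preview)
Your argument is correct and is the standard proof of this classical fact. Note, however, that the paper does not supply its own proof of this lemma: it is simply quoted from Havin and J\"{o}ricke \cite{HJ}, p.~36, as a known result, so there is no in-paper argument to compare against. What you have written is essentially the textbook route---analytic continuation to a half-plane via the one-sided support, Cayley transfer to the disk, and the log-integrability of boundary values of nonzero $H^\infty$ functions---which is precisely how the result is established in the cited reference.
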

Next, we state the following form of Radon-Nikodym derivative theorem (see \cite{F}, p.91).
\begin{proposition}\label{prop3}
Let $\nu$ be a $\sigma$-finite signed measure which is absolutely continuous with respect to
a $\sigma$-finite measure $\mu$ on the measure space $(X,\mathcal M).$ If $g\in L^1(\nu),$
then $g\frac{d\nu}{d\mu}\in L^1(\mu)$ and $\int gd\nu=\int g\frac{d\nu}{d\mu}d\mu.$
\end{proposition}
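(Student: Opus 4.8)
The plan is to prove this by the standard approximation argument of measure theory (the ``standard machine''): reduce first to the case of a positive measure $\nu$, then build up the class of admissible integrands $g$ from indicator functions. Write $h=\frac{d\nu}{d\mu}$ for the Radon--Nikodym derivative, which exists and is $\mu$-measurable by the Radon--Nikodym theorem applied to the $\sigma$-finite signed measure $\nu$ against the $\sigma$-finite measure $\mu$. Using the Jordan decomposition $\nu=\nu^{+}-\nu^{-}$ into mutually singular positive measures, one has $\frac{d\nu^{+}}{d\mu}=h^{+}$ and $\frac{d\nu^{-}}{d\mu}=h^{-}$, and for the total variation $|\nu|=\nu^{+}+\nu^{-}$ one obtains $\frac{d|\nu|}{d\mu}=|h|$. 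I would record this last identity at the outset, since it is exactly what converts the hypothesis $g\in L^{1}(\nu)$, meaning $\int|g|\,d|\nu|<\infty$, into the asserted integrability $g\,h\in L^{1}(\mu)$.

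For a positive measure $\nu$ I would establish the substitution identity in four steps. First, for $g=\chi_{E}$ with $E\in\mathcal M$ the identity $\int\chi_{E}\,d\nu=\nu(E)=\int_{E}h\,d\mu=\int\chi_{E}\,h\,d\mu$ is precisely the defining property of $h$. Second, by linearity of the integral it extends to every nonnegative simple function. Third, for a nonnegative measurable $g$ I would pick nonnegative simple functions $s_{n}\uparrow g$ and pass to the limit on both sides by the Monotone Convergence Theorem, noting that $s_{n}h\uparrow gh$ because $h\geq0$ in the positive case. Fourth, for general $g\in L^{1}(\nu)$ I would split $g=g^{+}-g^{-}$, apply the nonnegative case to each part, and subtract; the finiteness supplied by $g\in L^{1}(\nu)$ rules out any $\infty-\infty$ ambiguity in this subtraction.

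Returning to the signed case, I would apply the positive-measure result separately to $\nu^{+}$ and $\nu^{-}$ and subtract, obtaining $\int g\,d\nu=\int g\,h^{+}\,d\mu-\int g\,h^{-}\,d\mu=\int g\,h\,d\mu$. The integrability $g\,h\in L^{1}(\mu)$ then follows by applying the nonnegative version to $|g|$ against $|\nu|$, giving $\int|g\,h|\,d\mu=\int|g|\,|h|\,d\mu=\int|g|\,d|\nu|<\infty$. The only genuinely delicate point is the bookkeeping of the Jordan decomposition together with the total-variation derivative identity $\frac{d|\nu|}{d\mu}=|h|$; once those are in place, every limiting step is routine, and $\sigma$-finiteness guarantees that $h$ is well defined and finite $\mu$-almost everywhere throughout.
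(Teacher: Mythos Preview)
The paper does not supply its own proof of this proposition; it merely states it with a reference to Folland's \emph{Real Analysis}, p.~91. Your argument is correct and is precisely the standard-machine proof one finds in Folland: build up from indicators to simple functions to nonnegative measurables via monotone convergence, then handle general $g$ by splitting into positive and negative parts, with the Jordan decomposition and the identity $\frac{d|\nu|}{d\mu}=|h|$ taking care of the signed case.
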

As a consequence of Lemma \ref{lemma3} and Proposition \ref{prop3}, we prove the
following result. Let $h_c :\mathbb{R}\rightarrow\mathbb{R}$ defined by
$h_c(t)=\left(t+ \frac{1}{2c}\right)^2+e^{t^2}-t^2,$ where $c$ is a non-zero constant.
Let $|E|$ denotes the Lebesgue measure of the set $E\subset\mathbb R.$

\begin{lemma}\label{lemma105}
Let $g\in L^{1}(\mathbb R)$. Suppose $E\subset\mathbb R$ and $|E|>0.$ Then
\begin{equation}\label{exp105}
\int_{\mathbb R}e^{-i\pi cxh_c(t)}g(t)dt=0
\end{equation}
for all $x\in E$ if and only if  $\psi_c(u)=g\circ h_c^{-1}(u^2)\frac{2u}{h_c'\circ h_c^{-1}(u^2)}$
is an odd function.
\end{lemma}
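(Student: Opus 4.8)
The plan is to convert the integral in (\ref{exp105}) into the Fourier transform over a half-line of a single $L^1$ function and then apply Lemma \ref{lemma3}. First I would record the geometry of $h_c$. Writing $h_c(t)=\left(t+\frac{1}{2c}\right)^2+\left(e^{t^2}-t^2\right)$ and using $e^s-s\ge 1$ for $s\ge 0$, one gets $h_c(t)\ge 1>0$ for every $t$; moreover $h_c''(t)=2e^{t^2}(1+2t^2)>0$, so $h_c'$ is strictly increasing with limits $\pm\infty$ at $\pm\infty$, whence $h_c$ has a single critical point $t_0$, which is its global minimum, with value $m:=h_c(t_0)\ge 1$. Consequently $h_c$ restricts to a decreasing bijection of $(-\infty,t_0]$ onto $[m,\infty)$ and to an increasing bijection of $[t_0,\infty)$ onto $[m,\infty)$; denote the respective inverses by $a(\cdot)$ and $b(\cdot)$, so that $h_c^{-1}(v)=\{a(v),b(v)\}$ with $h_c'(a(v))<0<h_c'(b(v))$ for $v>m$.

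Next I would split the integral $I(x):=\int_{\mathbb R}e^{-i\pi c x h_c(t)}g(t)\,dt$ at $t_0$ and perform on each piece the $C^1$ monotone substitution $v=h_c(t)$ followed by $v=u^2$ (legitimate since $m>0$; Proposition \ref{prop3} handles the integrability of the resulting densities). Assigning $u>0$ to the branch $b$ and $u<0$ to the branch $a$, the signs $h_c'(b)>0$ and $h_c'(a)<0$ conspire to produce
\[ I(x)=\int_{\mathbb R}e^{-i\pi c x u^2}\,\psi_c(u)\,du, \]
where $\psi_c$ is the function of the statement — with the branch of $h_c^{-1}$ determined by the sign of $u$ and $\psi_c\equiv 0$ for $|u|<\sqrt m$ — and $\psi_c\in L^1(\mathbb R)$ because $g\in L^1(\mathbb R)$. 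Establishing this identity, with correct branch bookkeeping, is the crux of the argument; the remaining steps are soft.

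If $\psi_c$ is odd, then since the kernel $e^{-i\pi c x u^2}$ is even in $u$ the integral above vanishes for every $x\in\mathbb R$, in particular on $E$. For the converse, let $\Psi$ be the even part of $\psi_c$; then $I(x)=\int_{\mathbb R}e^{-i\pi cxu^2}\Psi(u)\,du=2\int_0^\infty e^{-i\pi cxu^2}\Psi(u)\,du$, and substituting $v=u^2$ once more gives $I(x)=\int_0^\infty e^{-i\pi cxv}G(v)\,dv$ with $G(v)=\Psi(\sqrt v)/\sqrt v\in L^1[0,\infty)$ (there is no singularity at $0$, since $G$ is supported in $[m,\infty)$ with $m\ge 1$). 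Thus $x\mapsto I(x)$ is, up to the nonzero dilation $x\mapsto cx$, the Fourier transform of $G\in L^1[0,\infty)$; its vanishing on $E$ means $\hat G$ vanishes on $cE$, a set of positive Lebesgue measure. As $|\hat G|\le\|G\|_1$, the function $\log|\hat G|$ is bounded above and equals $-\infty$ on $cE$, so $\int_{\mathbb R}\log|\hat G(\xi)|\,\frac{d\xi}{1+\xi^2}=-\infty$, and Lemma \ref{lemma3} forces $G\equiv 0$, i.e.\ $\Psi\equiv 0$, i.e.\ $\psi_c$ is odd. The only genuine obstacle is the first part: one must verify the strict convexity and positivity of $h_c$ so that the double change of variables is valid, and one must match the two branches of $h_c^{-1}$ to the sign of $u$ so that the pushed-forward density is exactly $\psi_c$; the analytic input reduces to the elementary observation that a bounded function whose modulus vanishes on a set of positive measure has divergent logarithmic integral.
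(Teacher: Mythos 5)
Your proof is correct and follows essentially the same route as the paper's: the substitution $u^2=h_c(t)$ to rewrite the integral as $\int_{\mathbb R}e^{-i\pi cxu^2}\psi_c(u)\,du$, extraction of the even part, a second substitution $v=u^2$ to obtain a one-sided Fourier transform of an $L^1[0,\infty)$ function, and an application of Lemma \ref{lemma3} on the set $cE$ of positive measure. Your treatment is in fact more careful than the paper's on the one genuinely delicate point, namely the convexity and positivity of $h_c$ and the matching of the two branches of $h_c^{-1}$ to the sign of $u$, which the paper passes over by simply invoking Proposition \ref{prop3}.
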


\begin{proof}
By  Proposition \ref{prop3}, we can write the left-hand side of  (\ref{exp105}) as
\begin{eqnarray}
I&=&\int_{\mathbb R}e^{-i\pi cxh_c(t)}g(t)dt\\
&=&\int_{\mathbb R}e^{-i\pi cxu^2}g\circ h_c^{-1}(u^2)\frac{2udu}{h_c'\circ h_c^{-1}(u^2)}  \nonumber \\
&=&\int_{\mathbb R}e^{-i\pi cxu^2}\psi_c(u)du.\nonumber
\end{eqnarray}
Hence by Proposition \ref{prop3} it follows that $\psi_c\in L^{1}(\mathbb{R})$ and we get
\begin{eqnarray*}
I&=&\int^{\infty}_{0}e^{-i\pi cxu^2}\psi_c(u)du+\int_{0}^{\infty}e^{-i\pi cxu^2}\psi_c(-u)du\\
&=&\int_{0}^{\infty}e^{-i\pi cxu^2}F_c(u)du,
\end{eqnarray*}
where $F_c(u)=\psi_c(u)+\psi_c(-u)$ for all $u>0.$ Clearly $F_c\in L^{1}(0,\infty)$ and
by the change of variables $u^2=v,$ we have
\begin{equation}\label{exp16}
I=\int_0^{\infty}e^{-i\pi cxv}F_c(\sqrt{v})\frac{dv}{2\sqrt{v}}.
\end{equation}
Let $\varphi(v)={F_c(\sqrt{v})}/{2\sqrt{v}}~\chi_{(0,\infty)}(v).$
Then $\varphi\in L^{1}(\mathbb R)$ and from (\ref{exp16}) we obtain
$I=\hat{\varphi}(cx)=0$ for all $x\in E.$ That is, $\hat{\varphi}$
vanishes on the set $cE$ of positive measure. Thus, by Lemma \ref{lemma3}
we conclude that $\varphi=0$ a.e. Hence, it follows that $F_c=0$ a.e. on
$[0,\infty).$
\smallskip

Conversely, if $\psi_c$ is an odd function, then (\ref{exp105}) trivially holds.

\end{proof}
\smallskip
As a corollary to Lemma \ref{lemma105}, we can easily derive the following result.
\begin{corollary}\label{cor1}
Let $g\in L^{1}(\mathbb R)$ and $\alpha :\mathbb R\rightarrow\mathbb R_{+}$ be
defined by $\alpha(t)=e^{t^2}$. Suppose $E\subset\mathbb R$ and $|E|>0.$ Then
\begin{equation}\label{exp15}
\int_{\mathbb R}e^{-i\pi x\alpha(t)}g(t)dt=0
\end{equation}
for all $x\in E$ if and only if  $g$ is an odd function.
\end{corollary}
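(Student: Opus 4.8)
The plan is to adapt the proof of Lemma \ref{lemma105} to this degenerate case, using that $\alpha(t)=e^{t^2}$ is even and strictly increasing on $(0,\infty)$. First I would split the integral in (\ref{exp15}) over $(-\infty,0)$ and $(0,\infty)$ and substitute $t\mapsto -t$ in the first piece; since $\alpha(-t)=\alpha(t)$, this collapses to
\[
\int_{\mathbb R}e^{-i\pi x\alpha(t)}g(t)\,dt=\int_0^\infty e^{-i\pi x\alpha(t)}\bigl(g(t)+g(-t)\bigr)\,dt .
\]
On $(0,\infty)$ the change of variable $v=e^{t^2}$ is a $C^1$-diffeomorphism onto $(1,\infty)$, with $t=\sqrt{\ln v}$ and $dv=2te^{t^2}\,dt$. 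Regarding $g(t)+g(-t)$ as the density of a finite signed measure on $(0,\infty)$ and pushing it forward along this diffeomorphism, Proposition \ref{prop3} gives that
\[
\varphi(v):=\frac{g(\sqrt{\ln v})+g(-\sqrt{\ln v})}{2v\sqrt{\ln v}}\,\chi_{(1,\infty)}(v)
\]
lies in $L^1[0,\infty)$ and that $\int_{\mathbb R}e^{-i\pi x\alpha(t)}g(t)\,dt=\hat\varphi(x)$.

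With this in hand, the hypothesis says $\hat\varphi$ vanishes on $E$ with $|E|>0$. Since $\log|\hat\varphi|\le\log\|\varphi\|_1$ on all of $\mathbb R$ while $\log|\hat\varphi|=-\infty$ on $E$, we get $\int_{\mathbb R}\log|\hat\varphi|\,\frac{dx}{1+x^2}=-\infty$, exactly as in the proof of Lemma \ref{lemma105}. Lemma \ref{lemma3} then forces $\varphi=0$ a.e., hence $g(t)+g(-t)=0$ for a.e.\ $t>0$; since the function $t\mapsto g(t)+g(-t)$ is even, this says $g$ is odd. For the converse, if $g$ is odd then $t\mapsto e^{-i\pi x\alpha(t)}g(t)$ is odd for every fixed $x$ (again because $\alpha$ is even), so the integral in (\ref{exp15}) vanishes identically.

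The one delicate point is the integrability of $\varphi$ near $v=1$, where the factor $1/\sqrt{\ln v}$ blows up. Rather than estimating this directly, I would lean on Proposition \ref{prop3}: the push-forward of a finite signed measure under the diffeomorphism $t\mapsto e^{t^2}$ is automatically a finite signed measure, so $\varphi\in L^1$ with no ad hoc bound near $v=1$ needed. The rest — the substitution bookkeeping and the appeal to Lemma \ref{lemma3} — is routine and parallels Lemma \ref{lemma105} line for line.
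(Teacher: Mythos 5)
Your proof is correct and follows essentially the same route the paper intends: it is the argument of Lemma \ref{lemma105} specialized to the even map $\alpha(t)=e^{t^2}$, reducing via symmetrization and the change of variables $v=e^{t^2}$ to a function in $L^1[0,\infty)$ whose Fourier transform vanishes on a set of positive measure, and then invoking Lemma \ref{lemma3}. The handling of the apparent singularity at $v=1$ via Proposition \ref{prop3} (push-forward of a finite measure) matches the paper's treatment, so no further comment is needed.
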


\smallskip
\begin{proposition}\label{prop125}
Suppose $E\subset\mathbb R$ and $|E|>0.$ Assume $c,d\in\mathbb{R}$
with $c,d\neq0.$ Then \\
(i) $\widehat\mu(x,cx)=0$ for all $x\in E$ if and only if $\psi_c$ is an odd function.\\
(ii) $\widehat\mu(x,cx+d)=0$ for all $x\in E$ if and only if
$\phi_c(u)=\chi\circ h_c^{-1}(u^2)\dfrac{2u}{h'_c\circ h_c^{-1}(u^2)}$
is an odd function of $u,$ where $\chi(t)=e^{- i\pi de^{t^2}}g(t).$
\end{proposition}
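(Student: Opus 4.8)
The plan is to deduce both parts directly from Lemma \ref{lemma105}, after a short algebraic manipulation that recognizes the restriction of $\widehat\mu$ to the given line as an integral of exactly the form appearing in that lemma.

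For part (i), I would first expand the definition of $h_c$:
\[
h_c(t)=\left(t+\frac{1}{2c}\right)^2+e^{t^2}-t^2=\frac{t}{c}+\frac{1}{4c^2}+e^{t^2},
\]
so that $c\,h_c(t)=t+c\,e^{t^2}+\frac{1}{4c}$. Consequently,
\[
\int_{\mathbb R}e^{-i\pi cxh_c(t)}g(t)\,dt
= e^{-i\pi x/(4c)}\int_{\mathbb R}e^{-i\pi x\left(t+ce^{t^2}\right)}g(t)\,dt
= e^{-i\pi x/(4c)}\,\widehat\mu(x,cx).
\]
Since the factor $e^{-i\pi x/(4c)}$ is never zero, the condition $\widehat\mu(x,cx)=0$ for all $x\in E$ is equivalent to the vanishing of the left-hand integral for all $x\in E$, which by Lemma \ref{lemma105} holds precisely when $\psi_c$ is an odd function.

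For part (ii), I would absorb the constant $d$ into the density. Writing
\[
\widehat\mu(x,cx+d)=\int_{\mathbb R}e^{-i\pi x\left(t+ce^{t^2}\right)}e^{-i\pi d e^{t^2}}g(t)\,dt
=\int_{\mathbb R}e^{-i\pi x\left(t+ce^{t^2}\right)}\chi(t)\,dt,
\]
with $\chi(t)=e^{-i\pi d e^{t^2}}g(t)$, and noting that $|\chi|=|g|$ so that $\chi\in L^1(\mathbb R)$, this is exactly the integral of part (i) with $g$ replaced by $\chi$. Applying part (i) (equivalently, Lemma \ref{lemma105}) with $g$ replaced by $\chi$, the associated function $\psi_c$ becomes precisely $\phi_c(u)=\chi\circ h_c^{-1}(u^2)\,\frac{2u}{h_c'\circ h_c^{-1}(u^2)}$, and we conclude that $\widehat\mu(x,cx+d)=0$ for all $x\in E$ if and only if $\phi_c$ is odd.

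The argument is essentially bookkeeping: the only care needed is in verifying the identity for $h_c$ (so that multiplying by $c$ converts the term $t/c$ into the linear exponent $t$) and in observing that neither the nonvanishing prefactor $e^{-i\pi x/(4c)}$ nor the unimodular twist $e^{-i\pi d e^{t^2}}$ alters the zero set on $E$ or the $L^1$-membership of the density. I expect no substantive obstacle here, since all the analytic content—the appeal, through Lemma \ref{lemma105}, to the Havin and J\"{o}ricke log-integral criterion—has already been isolated in the preceding lemmas.
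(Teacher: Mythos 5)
Your proof is correct and follows essentially the same route as the paper: both reduce the restriction of $\widehat\mu$ to the line to the integral $\int_{\mathbb R}e^{-i\pi cxh_c(t)}g(t)\,dt$ (resp.\ with $\chi$ in place of $g$) via the identity $c\,h_c(t)=t+ce^{t^2}+\frac{1}{4c}$, observe that the unimodular prefactor $e^{\mp i\pi x/(4c)}$ does not affect the zero set, and then invoke Lemma \ref{lemma105}. Your additional remarks that $|\chi|=|g|$ preserves $L^1$-membership and that the twist $e^{-i\pi de^{t^2}}$ is harmless are sensible bookkeeping that the paper leaves implicit.
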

\begin{proof}
$(i).$ From (\ref{exp4}) we can express
\[\hat\mu{(x, cx)}=\int_{\mathbb R}e^{- i\pi x\left(t + ce^{t^2}\right)}g(t)dt
=e^{i\pi x/4c}\int_{\mathbb R}e^{- i\pi cxh_c(t)}g(t)dt.\]
By Lemma \ref{lemma105}, $\psi_c$ is odd if and only if $\widehat\mu(x,cx)=0$
for all $x\in E.$
\smallskip

$(ii).$ By a simple computation, we get
\[\hat\mu{(x, cx+d)}=\int_{\mathbb R}e^{- i\pi x\left(t + ce^{t^2}\right)}\chi(t)dt
= e^{i\pi x/4c}\int_{\mathbb R}e^{- i\pi cxh_c(t)}\chi(t)dt.\]
As similar to the above case, $\phi_c$ is odd if and only if $\widehat\mu(x,cx+d)=0$
for all $x\in E.$
\end{proof}
\smallskip

\begin{proof}[ Proof of Theorem \ref{th08}]
$(i).$  In view of the invariance property $(\text{i}),$ we can assume that $\Lambda$ is
the $x$-axis. Recall that $\hat\mu$ satisfies
\[\hat\mu{(x, y)}= \int_{\mathbb R}e^{- i\pi\left(xt + ye^{t^2}\right)}g(t)dt.\]
Hence $\hat\mu\vert_\Lambda=0$ implies that $\hat g(x)=0$ for all $x\in\mathbb R.$
Thus, we conclude that $\mu=0.$
\smallskip

Conversely, suppose $\Lambda$ is not parallel to the $x$-axis. If $\Lambda$ is parallel to
the $y$-axis, then by Corollary \ref{cor1}, it follows that $\left(\Gamma,\Lambda\right)$
is not a HUP. Hence we can assume that $\Lambda$ of the form
$y=cx,$ where $c\neq0.$ Choose a non-zero odd function $\varphi\in L^{1}(\mathbb R)$ and
let $g(t)=\frac{\varphi(\sqrt{h_c(t)})h'_c(t)}{2\sqrt{h_c(t)}}.$ Then by Proposition \ref{prop125},
it follows that $\left(\Gamma,\Lambda\right)$ is not a Heisenberg uniqueness pair.

\smallskip

$(ii).$ Let $L_1=\{(x, cx):~x\in\mathbb R\}$ and $L_2=\{(x, cx+d):~x\in\mathbb R\}$, where $c,d\neq0.$
Since $\hat\mu|_{L_j}=0;~j=1,2,$ by Proposition \ref{prop125} it follows that $\psi_c$ and $\phi_c$ are
odd functions. Let $u_+$ and $u_-$ be the square roots of $h_c.$ Since $\phi_c$ is an odd function, it implies that
\[\left[e^{i\pi d\left\{e^{\left(h_c^{-1}(u_-^2)\right)^2}-e^{\left(h_c^{-1}(u_+^2)\right)^2}\right\}}-1\right]\psi_c(u)=0.\]
That is, $\psi_c=0$ a.e. and hence $g\circ h_c^{-1}(u^2)=0$ almost all $u.$
Thus, the pair $(\Gamma,L_1\cup L_2)$ is a Heisenberg uniqueness pair.

\end{proof}

\begin{remark}\label{rk2}
Let $\alpha :\mathbb R\rightarrow\mathbb R_{+}$ be an even smooth function having
finitely many local extrema and consider $\Gamma=\left\{(t, \alpha(t)): t\in\mathbb R\right\}.$
Then the conclusions of Theorem \ref{th08} would also hold.
\end{remark}

\smallskip

\begin{theorem}\label{th10}
Let $\Gamma$ be the surface $x_{n+1}=e^{x_1^2}+\cdots+e^{x_n^2}$ in $\mathbb R^{n+1}$ and $\Lambda$ an
affine hyperplane in $\mathbb R^{n+1}$ of dimension $n.$ Then $\left(\Gamma,\Lambda\right)$
is a Heisenberg uniqueness pair if and only if $\Lambda$ is parallel to the hyperplane $x_{n+1}=0.$
\end{theorem}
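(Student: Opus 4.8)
The plan is to parametrize members of $X(\Gamma)$ by the first $n$ coordinates and then treat the two directions separately. Every $\mu\in X(\Gamma)$ is $d\mu=g(t)\,dt$ with $g\in L^1(\mathbb R^n)$, and, writing $x=(x_1,\dots,x_n)$, $y=x_{n+1}$ and $S(t)=\sum_{j=1}^{n}e^{t_j^2}$,
\[
\hat\mu(x,y)=\int_{\mathbb R^n}e^{-i\pi(x\cdot t+y\,S(t))}\,g(t)\,dt .
\]
The easy direction is then immediate: if $\Lambda$ is parallel to $\{x_{n+1}=0\}$ then $\Lambda=\{(x,y_0):x\in\mathbb R^n\}$ for a fixed $y_0$, so $\hat\mu|_\Lambda=0$ says exactly that the Fourier transform on $\mathbb R^n$ of the $L^1$ function $e^{-i\pi y_0S(t)}g(t)$ vanishes identically; by injectivity of the Fourier transform $g=0$, hence $\mu=0$ and $(\Gamma,\Lambda)$ is a Heisenberg uniqueness pair.

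For the converse I argue by contraposition: assuming $\Lambda$ is \emph{not} parallel to $\{x_{n+1}=0\}$, so that its normal is $(c,c_{n+1})$ with $c\in\mathbb R^n\setminus\{0\}$, I will construct a nonzero $g\in L^1(\mathbb R^n)$ with $\hat\mu|_\Lambda=0$. The key structural input — the $n$-dimensional replacement for the role of $h_c$ in Theorem \ref{th08} — is that for any $a\in\mathbb R^n\setminus\{0\}$ and any $w$, the one–variable function $s\mapsto S(w+sa)=\sum_k e^{(w_k+sa_k)^2}$ is strictly convex (differentiate twice: the second derivative is $\sum_k 2a_k^2e^{(w_k+sa_k)^2}\bigl(1+2(w_k+sa_k)^2\bigr)>0$, and some $a_k\neq0$) and tends to $+\infty$ at $\pm\infty$. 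Consequently any function of the form $s\mapsto\alpha s+S(w+sa)$ is strictly convex with a unique global minimum, so it equals $v^2+(\text{const})$ under a smooth change of variable $s\leftrightarrow v$ that maps $\mathbb R$ bijectively onto $\mathbb R$ (put $v>0$ on one side of the minimum and $v<0$ on the other; smoothness at the minimum and smooth dependence on $w$ follow from the implicit function theorem, using positivity of the second derivative). Both cases below then reduce to the same mechanism: after a linear change of variables in $t$–space, $\hat\mu|_\Lambda$ becomes an oscillatory integral in which one new coordinate $v$ enters only through $v^2$ — apart from a unimodular factor that can simply be absorbed into $g$ — so that taking $g$ to be (a pull–back of) any function odd in $v$ annihilates it.

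In detail: if $c_{n+1}\neq0$ then $\Lambda=\{(x,a\cdot x+b):x\in\mathbb R^n\}$ with $a=-c/c_{n+1}\neq0$, and $\hat\mu(x,a\cdot x+b)=\int e^{-i\pi x\cdot\Phi(t)}\chi(t)\,dt$ with $\Phi(t)=t+a\,S(t)$, $\chi=e^{-i\pi bS}g$; fixing a linear complement $W$ of $\mathbb R a$ and writing $t=sa+w$ gives $\Phi(sa+w)=w+\phi_w(s)\,a$ with $\phi_w(s)=s+S(sa+w)$, so $x\cdot\Phi(sa+w)=x\cdot w+\phi_w(s)(a\cdot x)$, and one replaces $s$ by $v$ with $\phi_w(s)=v^2+m(w)$. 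If $c_{n+1}=0$ then $\Lambda=\{(x,y):c\cdot x=d\}$ with $y$ free; relabelling coordinates so that $c_1\neq0$ (legitimate since $S$ is symmetric in $t_1,\dots,t_n$), substituting $x_1=(d-\sum_{k\ge2}c_kx_k)/c_1$ and applying the shear $u_1=t_1$, $u_k=t_k-(c_k/c_1)t_1$ turns $\hat\mu|_\Lambda$ into $\int e^{-i\pi(\sum_{k\ge2}x_ku_k+y\widetilde S(u))}G(u)\,du$ with $G=e^{-i\pi(d/c_1)u_1}\tilde g$ and $\widetilde S(u)=e^{u_1^2}+\sum_{k\ge2}e^{(u_k+(c_k/c_1)u_1)^2}$ strictly convex in $u_1$ for each $u'=(u_2,\dots,u_n)$, and one replaces $u_1$ by $v$ with $\widetilde S=v^2+M(u')$. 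In either case, choosing any nonzero $H\in L^1(\mathbb R^n)$ that is odd in its $v$–variable and letting $g$ be the corresponding pull–back (absorbing the unimodular factor and the constant Jacobians, so that $\|g\|_1=\|H\|_1$ and $g\neq0$) forces the inner $v$–integral — of an even function against $H$ — to vanish, whence $\hat\mu|_\Lambda\equiv0$. Thus $(\Gamma,\Lambda)$ is not a Heisenberg uniqueness pair, completing the contrapositive.

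I expect the main obstacle to be precisely the convexity normal form and the change–of–variables bookkeeping it entails: one must check that $s\leftrightarrow v$ is a genuine global diffeomorphism (including smoothness at the minimum, where the second derivative is positive), that the location of the minimum and the substitution depend measurably and smoothly on the transverse parameters $w$ (resp.\ $u'$) — supplied by the implicit function theorem — and that the constructed $g$ really lies in $L^1(\mathbb R^n)$ and is nonzero. The split into the two cases $c_{n+1}\neq0$ and $c_{n+1}=0$ appears unavoidable, since a hyperplane containing the $x_{n+1}$–direction is not a graph $y=a\cdot x+b$; everything else (the formula for $\hat\mu$, the two reductions, and the final oddness cancellation) is routine.
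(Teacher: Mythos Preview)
Your argument is correct and in fact more complete than the paper's. For the necessity direction the paper translates $\Lambda$ to a linear hyperplane and then asserts, ``by the invariance properties of HUP'', that it suffices to treat the two model cases $\Lambda=\{x_{n+1}=cx_1\}$ and $\Lambda=\{x_1=0\}$; in each of these it builds a \emph{separated} density $g(u)=\tau(u_1)h(u')$ and uses the explicit one–variable substitution $h_c$ of Lemma~\ref{lemma105} (respectively, the evenness of $e^{u_1^2}$) to kill the $u_1$–integral. That reduction is not fully justified as stated: the surface $\Gamma$ is invariant only under permutations and sign changes of $x_1,\dots,x_n$, not under rotations, so a generic hyperplane $\sum_{i\le n}a_ix_i+a_{n+1}x_{n+1}=d$ with several $a_i\neq0$ cannot be brought to one of those two model forms while keeping $\Gamma$ fixed, and the product ansatz $g(u)=\tau(u_1)h(u')$ does not separate the phase in the general case. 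Your convexity/Morse normal form is exactly the device that bypasses this: slicing $t$–space along the direction $a$ (resp.\ along $u_1$ after the shear) and using that $s\mapsto S(w+sa)$ is strictly convex for \emph{any} nonzero $a\in\mathbb R^n$ yields the $v^2$–parametrisation of the phase without any symmetry reduction, after which the odd–in–$v$ density annihilates $\hat\mu|_\Lambda$ for an arbitrary non-horizontal affine hyperplane. The paper's construction is more explicit where it applies, but your argument is the one that actually covers the theorem in full generality, and it makes equally clear why only strict convexity of the profile (rather than the specific choice $e^{t^2}$) is being used.
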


For $u=(u_1,\ldots,u_n),$ denoting $\varphi(u)=e^{u_1^2}+\cdots+e^{u_n^2}.$ Let $\mu$ be a finite Borel
measure which is supported on $\Gamma=\{(u,\varphi(u)):~u\in\mathbb R^n\}$ and absolutely continuous
with respect to the surface measure on $\Gamma.$  Then by Radon-Nikodym theorem, there exists a
measurable function $f$ on $\mathbb R^n$ such that $d\mu=g(u)du,$ where
$g(u)=f(u)\sqrt{1+|\text{grad}~\varphi(u)|^2}.$ Then by the finiteness of $\mu,$ it follows that
$g\in L^1(\mathbb R^n).$ Denote $u'=(u_2,\ldots,u_n)$ and $x'=(x_2,\ldots,x_n).$ Then the Fourier
transform of $\mu$ can be expressed as
\begin{equation}\label{exp70}
\hat\mu(x)=\int_{\mathbb R^n}e^{-\pi i\left(x'.u'+x_{n+1}\varphi(u)\right)}g(u)du
\end{equation}
for $x\in\mathbb R^{n+1}.$
\smallskip

\begin{proof}[ Proof of Theorem \ref{th10}.]
Since $\Lambda$ is an affine hyperplane in $\mathbb R^{n+1}$ of dimension $n,$ by
 the invariance properties of HUP, we can assume that $\Lambda$ is a linear
subspace of $\mathbb R^{n+1}$ which can be considered as either $x_{n+1}=cx_1,$
where $c\in\mathbb R$ or $x_1=0.$
\smallskip

If $\Lambda=\left\{(x_1,\ldots,x_{n+1})\in\mathbb R^{n+1}:x_{n+1}=0\right\},$
then by the hypothesis, $\hat\mu\vert_\Lambda=0$ implies $\hat g=0$ on $\mathbb R^n.$
Thus, it follows that $\left(\Gamma,\Lambda\right)$ is a HUP.

\smallskip

Conversely, suppose $\Lambda$ is not parallel to the hyperplane $x_{n+1}=0.$ Consider
a non-zero compactly supported odd function $\psi\in L^1(\mathbb R)$ together with a
non-zero compactly supported  function $h\in L^1(\mathbb R^{n-1}).$ Then we have the
following two cases.
\smallskip

$(i).$ If $\Lambda=\left\{(x_1,\ldots,x_{n+1})\in\mathbb R^{n+1}:x_1=0\right\}$
and $g(u)=\psi(u_1)h(u').$ Then for $x\in\Lambda,$ we have
\begin{eqnarray*}
\hat\mu(x) &=&\int_{\mathbb R^n}e^{-\pi i\left(x'.u'+x_{n+1}\varphi(u)\right)}g(u)du\\
&=&\int_{\mathbb R^n}e^{-\pi i\left(x'.u'+x_{n+1}\varphi(u)\right)}\psi(u_1)h(u')du\\
&=&\int_{\mathbb R^{n-1}}e^{-\pi i\left\{x'.u'+x_{n+1}\left(\varphi(u)-e^{u_1^2}\right)\right\}}
\left(\int_\mathbb Re^{-\pi ix_{n+1}e^{u_1^2}}\psi(u_1)du_1\right)h(u')du'\\
&=&0.
\end{eqnarray*}
Thus, $\left(\Gamma,\Lambda\right)$ is not a Heisenberg uniqueness pair.

\smallskip

$(ii).$ Suppose $\Lambda=\left\{(x_1,\ldots,x_{n+1})\in\mathbb R^{n+1}:x_{n+1}=cx_1,c\neq0\right\}.$
Consider a function $\tau(t)=\frac{\psi\left(\sqrt{h_c(t)}\right)h'_c(t)}{2\sqrt{h_c(t)}}$
and write $g(u)=\tau(u_1)h(u').$ If we denote $x''=(x_1,\ldots,x_n).$ Then for $x\in\Lambda,$ we have
\begin{align*}
\hat\mu(x) &= \int_{\mathbb R^n}e^{-\pi i\left(x''.u+cx_1\varphi(u)\right)}g(u)du\\
&= \int_{\mathbb R^n}e^{-\pi i\left(x''.u+cx_1\varphi(u)\right)}\tau(u_1)h(u')du\\
&= \int_{\mathbb R^{n-1}}e^{-\pi i\left\{x'.u'+cx_1\left(\varphi(u)-e^{u_1^2}\right)\right\}}
\left(\int_\mathbb Re^{-\pi i\left(x_1u_1+cx_1e^{u_1^2}\right)}\tau(u_1)du_1\right)h(u')du'.
\end{align*}
By Lemma \ref{lemma105}, it follows that
\begin{eqnarray*}
\int_\mathbb Re^{-\pi ix_1(u_1+ce^{u_1^2})}\tau(u_1)du_1&=&\int_\mathbb Re^{-\pi icx_1h_c(u_1)}\tau(u_1)du_1\\
&=&\int_\mathbb Re^{-\pi icx_1t^2}\psi(t)dt\\
&=&0.
\end{eqnarray*}
Thus, we conclude that $\left(\Gamma,\Lambda\right)$ is not a Heisenberg uniqueness pair.
\end{proof}

\section{Heisenberg uniqueness pairs corresponding to the finite number of parallel lines}\label{section3}
A characterization of the Heisenberg uniqueness pairs corresponding to two parallel
lines was being done by Hedenmalm and Montes-Rodr\'iguez  (see \cite{HR}). Afterward,
Babot \cite{Ba} worked out an analogous result for a certain system of three parallel
lines. Further, authors had extended the above result to a system of four parallel
lines (see \cite{GR}). In this section, we shall prove a characterization of the
Heisenberg uniqueness pairs corresponding to a certain system of finitely many
parallel lines. In the latter case, we observe the phenomenon of interlacing
of the finitely many totally disconnected sets.
\smallskip

Denote $F_s=\{0,1,\ldots, s\}.$ Let $\Gamma_o$ be a system of $n+2$ parallel lines that
can be expressed as $\Gamma_o=\mathbb R\times\{\alpha_o,\ldots,\alpha_{n+1}\},$ where
$\alpha_o<\cdots<\alpha_{n+1}$ and  $p_j=(\alpha_j-\alpha_o)/(\alpha_1-\alpha_o)$ with
\[ p_j=\begin{cases}
      j & \text{if}~2\leq j\leq n, \\
      p\in\mathbb N\smallsetminus F_n &\text{if}~ j=n+1.
   \end{cases}
\]
If $(\Gamma_o, \Lambda_o)$ is a HUP, then by the invariance property $(i),$ $(\Gamma_o, \Lambda_o)$
can be thought as $\left(\Gamma_o-(0,\alpha_o), \Lambda_o\right).$ Since scaling in $\mathbb R^2$
can be viewed as a diagonal matrix, by the invariance property $(ii),$ $\left(\Gamma_o-(0,\alpha_o), \Lambda_o\right)$
can be considered as $\left(T^{-1}(\Gamma_o-(0,\alpha_o)), T^\ast\Lambda_o\right),$ where
$T=\text{diag}\{(\alpha_1-\alpha_o),~(\alpha_1-\alpha_o)\}.$ Let $\Gamma=T^{-1}(\Gamma_o-(0,\alpha_o))$
and $\Lambda=T^\ast\Lambda_o.$ Then $\Gamma=\mathbb R\times\left(F_n\cup\{p\}\right),$ where $p\in\mathbb N$
with $p\geq n+1.$ In view of the above facts, $(\Gamma_o, \Lambda_o)$ is a
HUP if and only if $(\Gamma, \Lambda)$ is a HUP.

\smallskip

To state our main result of this section,  we need to set up some necessary
notations and few auxiliary results.

\smallskip

Let $\mu\in X(\Gamma).$ Then there exist $f_k\in L^1(\mathbb R);~k\in F_{n+1}$
such that
\begin{equation}\label{exp1}
d\mu(x,y)=\sum_{j=0}^nf_j(x)dxd\delta_j(y)+f_{n+1}(x)dxd\delta_p(y),
\end{equation}
where $\delta_{t}$ denotes the point mass measure at $t.$ For $(\xi,\eta)\in\mathbb R^2,$
the Fourier transform of $\mu$ can be written as
\begin{equation}\label{exp11}
\hat\mu(\xi,\eta)=\sum_{j=0}^ne^{j\pi i\eta}\hat{f_j}(\xi) + e^{p\pi i\eta}\hat{f}_{n+1}(\xi).
\end{equation}
Since $\mu$ is supported on $\Gamma=\left\{(x,y)\in\mathbb R^2: P(y)\equiv\prod_{k=0}^n(y-k)(y-p)=0\right\},$
 the function $\hat\mu$ will satisfy the PDE
\begin{equation}\label{exp81}
P\left(\frac{1}{\pi i}\frac{\partial}{\partial\eta}\right)\hat\mu=0
\end{equation}
in the distributional sense with the initial condition $\hat\mu\vert_{\Lambda}=0.$
On the other hand, if (\ref{exp81}) holds, then $\text{supp}~\mu\subset\Gamma.$

\begin{remark}\label{rk4}
Notice that for each fixed $(\xi,\eta)\in\Lambda\subset\mathbb R^2,$ the right-hand side of
$(\ref{exp11})$ is a trigonometric polynomial of degree $p$ that could have preferably some
missing terms. Therefore, it is an interesting question to find out the smallest determining
set $\Lambda$ for the above trigonometric polynomial. We observe that the size of $\Lambda$
depends on the choice of number of lines as well as irregular separation among themselves.
That is, larger number of lines or value of $p$ would force smaller size of $\Lambda.$
Eventually, the problem would become more difficult for large value of $p.$
\end{remark}
\smallskip

Observe that $\hat \mu$ is a $2$-periodic function in the second variable.
Thus, for any set $\Lambda\subset\mathbb R^2,$ it is enough to consider the set
\[\pounds(\Lambda)= \left\{(\xi,\eta) : (\xi,\eta+ 2k)\in\Lambda, \text{ for some } k\in\mathbb Z\right\}\]
for the purpose of HUP. Also, it is easy to verify that $\left(\Gamma,\Lambda\right)$ is a
HUP if and only if $(\Gamma,\overline{\pounds(\Lambda)})$ is a HUP, where
$\overline{\pounds(\Lambda)}$ denotes the closure of $\pounds(\Lambda)$ in
$\mathbb R^2.$ Thus, it is enough to work with the closed\texttt{}
set $\Lambda\subset\mathbb R^2$ which is $2$-periodic in the second variable.

\begin{remark}\label{rk5}
Further, it is evident from the Riemann-Lebesgue lemma that the exponential
functions, which appeared in (\ref{exp11}), can not be expressed as
the Fourier transform of functions in $L^1(\mathbb R).$ However, by
Wiener's lemma, they can be locally agreed with the Fourier transform
of functions in $L^1(\mathbb R).$
\end{remark}

Hence, in view of Remark \ref{rk5} and the condition
$\hat\mu\vert_\Lambda=0,$ we can classify these exponential functions in
the form of the following annihilating spaces which will create dispensable
sets in the determining set $\Lambda.$

Given $E\subset\mathbb R$ and a point $\xi\in E,$ let $I_\xi$ be an interval
containing $\xi.$ We define

\smallskip

\noindent $\textbf{(A1).}$ $ L^{E,\xi}_{loc}=\{\psi :E\rightarrow\mathbb C$
such that there is an interval $I_\xi$ and a function $\varphi\in L^1(\mathbb R)$
satisfying $\psi= \hat{\varphi}$ on $I_{\xi}\cap E\}.$
\smallskip

Next, for $2\leq m\leq n,$ we define
\smallskip

\noindent $\textbf{(A2).}$
$P^{1, m}[ L^{E,\xi}_{loc}]=\{\psi :E\rightarrow\mathbb C$ such that there is an interval
$I_{\xi}$ and functions $\varphi_j\in L^1(\mathbb R);~j\in F_{m-1}$ satisfying
$\psi^m+\hat{\varphi}_{m-1}\psi^{m-1}+\cdots+\hat{\varphi}_1\psi+\hat{\varphi}_0=0 \text { on }I_{\xi}\cap E\}.$
\smallskip

Finally, we define the $p$-th space by
\smallskip

\noindent $\textbf{(A3).}$  $P^{1, p}[ L^{E,\xi}_{loc}]=\{\psi :E\rightarrow\mathbb C$
such that there is an interval $I_{\xi}$ and functions $\varphi_j\in L^1(\mathbb R);~j\in F_n$
satisfying $\psi^p+\hat{\varphi}_n\psi^n+\cdots+\hat{\varphi}_1\psi+\hat{\varphi}_o=0$
on $I_{\xi}\cap E\}.$
\smallskip

We frequently need the following Wiener's lemma that will be a key ingredient
for most of the proofs in the sequel.

\begin{lemma}\label{lemma4}\cite{K}
Let $\psi\in L^{E,\xi}_{loc}$ and $\psi(\xi)\neq0.$ Then $1/{\psi}\in  L^{E,\xi}_{loc}.$
\end{lemma}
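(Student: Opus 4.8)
The plan is to derive this local statement from the classical Wiener $1/f$ theorem (the cited \cite{K}) for the unitization $W^{+}=\mathbb C\cdot1+\widehat{L^{1}(\mathbb R)}$ of the Wiener algebra, after trading the interval $I_{\xi}$ supplied by the hypothesis for a sufficiently small one.

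First I would unwind the hypothesis: choose an interval $I_{\xi}\ni\xi$ and $\varphi\in L^{1}(\mathbb R)$ with $\psi=\hat\varphi$ on $I_{\xi}\cap E$, and set $a=\hat\varphi(\xi)=\psi(\xi)\neq0$. Since $\hat\varphi$ is continuous, I may pick a closed subinterval $J\subset I_{\xi}$ with $\xi$ in its interior on which $\lvert\hat\varphi-a\rvert<\lvert a\rvert/2$. Choose a smooth compactly supported $\theta$ with $0\le\theta\le1$, $\theta\equiv1$ on an open interval $U\ni\xi$, and $\text{supp}\,\theta$ inside the interior of $J$; such a $\theta$ is the Fourier transform of a Schwartz function, hence lies in $\widehat{L^{1}(\mathbb R)}$. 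Now put
\[
h=a+\theta\,(\hat\varphi-a)=\theta\,\hat\varphi+(1-\theta)\,a .
\]
Then $h=\hat\varphi=\psi$ on $U\cap E$, $h\equiv a$ off $J$, and on $J$ one has $\lvert h-a\rvert=\theta\lvert\hat\varphi-a\rvert<\lvert a\rvert/2$; hence $h$ is zero-free on all of $\mathbb R$, with $h(x)\to a\neq0$ as $\lvert x\rvert\to\infty$. Also $h-a=\theta\hat\varphi-a\theta\in\widehat{L^{1}(\mathbb R)}$, so $h\in W^{+}$, and since $h$ has no zero on the maximal ideal space $\mathbb R\cup\{\infty\}$ of $W^{+}$, the classical Wiener lemma yields $1/h=b+\hat\eta$ for some constant $b$ (necessarily $b=1/a$) and some $\eta\in L^{1}(\mathbb R)$.

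It remains to absorb the constant $b$ near $\xi$. Since for any bounded interval there is an $L^{1}$ function whose Fourier transform is identically $1$ on it, pick an open interval $U'\subset U$ with $\xi\in U'$ and $\omega_{0}\in L^{1}(\mathbb R)$ with $\hat\omega_{0}\equiv1$ on $U'$. On $U'\cap E$ we have $\psi=h$, and $\psi$ is nonvanishing there because $h$ is; hence
\[
\frac1\psi=\frac1h=b+\hat\eta=\widehat{\,b\,\omega_{0}+\eta\,}\qquad\text{on }U'\cap E ,
\]
with $b\,\omega_{0}+\eta\in L^{1}(\mathbb R)$. Taking $I_{\xi}'=U'$, this exhibits $1/\psi\in L^{E,\xi}_{loc}$.

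The substantive points are the two elementary surgeries — cutting $\hat\varphi$ off to a globally zero-free element of $W^{+}$ without disturbing it near $\xi$, and absorbing the additive constant of $1/h$ on a small interval — since the invertibility of $h$ itself is precisely the classical Wiener lemma and requires no new argument; the one place that demands a little care is checking that $h$ stays zero-free on all of $\mathbb R$, which is exactly why $J$ must be small enough that $\hat\varphi$ remains within $\lvert a\rvert/2$ of its value at $\xi$.
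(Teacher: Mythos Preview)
Your proof is correct. The paper, however, does not supply its own proof of this lemma: it simply states the result and points to Kahane's book (\cite{K}, p.~57) for details. So there is nothing to compare on the paper's side beyond the citation.

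Your argument is the standard localization of Wiener's $1/f$ theorem: cut $\hat\varphi$ off to a globally zero-free element of the unitized Wiener algebra $W^{+}$ without disturbing it near $\xi$, invert there by the classical theorem, and then absorb the resulting constant on a small interval using an $L^{1}$ function whose transform is identically $1$ there. Each step checks out; in particular your choice of $J$ small enough that $\lvert\hat\varphi-a\rvert<\lvert a\rvert/2$ guarantees $h$ is zero-free on all of $\mathbb R\cup\{\infty\}$, which is exactly the hypothesis needed for invertibility in $W^{+}$. The only cosmetic remark is that the constant $b$ must equal $1/a$ (as you note), which follows immediately from evaluating $1/h$ at infinity.
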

For more details see \cite{K}, p.57.
\smallskip

Next, we establish a relation among the sets defined by $\textbf{(A1)},$  $\textbf{(A2)}$
and $\textbf{(A3)}$ that will help in figure out the dispensable sets.

\begin{lemma}\label{lemma5} The following inclusions hold.
\begin{equation}\label{exp3}
L^{E,\xi}_{loc}\subset P^{1,2}[ L^{E,\xi}_{loc}]\subset\cdots\subset P^{1,n}[ L^{E,\xi}_{loc}]\subset P^{1, p}[ L^{E,\xi}_{loc}].
\end{equation}
\end{lemma}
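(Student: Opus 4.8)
The plan is to prove the chain of inclusions in (\ref{exp3}) in two stages: first the initial inclusion $L^{E,\xi}_{loc}\subset P^{1,2}[L^{E,\xi}_{loc}]$, then the generic step $P^{1,m}[L^{E,\xi}_{loc}]\subset P^{1,m+1}[L^{E,\xi}_{loc}]$ for $2\le m\le n-1$, and finally the last inclusion $P^{1,n}[L^{E,\xi}_{loc}]\subset P^{1,p}[L^{E,\xi}_{loc}]$. The unifying idea is that if $\psi$ satisfies a monic polynomial identity of degree $m$ with coefficients in $\{\hat\varphi:\varphi\in L^1(\mathbb R)\}$ on $I_\xi\cap E$, then multiplying that identity by a suitable factor produces a monic polynomial identity of the next degree with coefficients again of the required form. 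For the first inclusion, if $\psi=\hat\varphi$ on $I_\xi\cap E$ with $\varphi\in L^1(\mathbb R)$, then $\psi^2-\hat\varphi\,\psi=0$ there, which is exactly an $\textbf{(A2)}$-identity with $m=2$, $\varphi_1=-\varphi$ and $\varphi_0=0$. For the generic step $m\to m+1$, given $\psi^m+\hat\varphi_{m-1}\psi^{m-1}+\cdots+\hat\varphi_0=0$ on $I_\xi\cap E$, simply multiply through by $\psi$; this is legitimate because $\psi$ itself need not lie in $L^1$-Fourier form, but the product $\hat\varphi_{m-1}\psi^m$ etc.\ does not cause trouble — we only need the resulting coefficients of $\psi^{m},\psi^{m-1},\dots,\psi^0$ to be Fourier transforms of $L^1$ functions, and after multiplication the new coefficient of $\psi^{j}$ is $\hat\varphi_{j-1}$ (with $\hat\varphi_m:=1$ understood, $\hat\varphi_{-1}:=0$), still of the right type, while the new leading term $\psi^{m+1}$ is monic. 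Hence $\psi\in P^{1,m+1}[L^{E,\xi}_{loc}]$.

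The only inclusion requiring more than a one-line multiplication trick is the last one, $P^{1,n}[L^{E,\xi}_{loc}]\subset P^{1,p}[L^{E,\xi}_{loc}]$, since here the degree must be raised from $n$ to $p$ in a single conceptual step (with $p\ge n+1$ possibly much larger than $n+1$). I would handle this by iterating the multiplication-by-$\psi$ argument exactly $p-n$ times: starting from the degree-$n$ identity $\psi^n+\hat\varphi_{n-1}\psi^{n-1}+\cdots+\hat\varphi_o=0$ on $I_\xi\cap E$, multiply by $\psi^{\,p-n}$ to obtain $\psi^p+\hat\varphi_{n-1}\psi^{p-1}+\cdots+\hat\varphi_o\psi^{p-n}=0$, and observe that all coefficients of $\psi^{p-1},\dots,\psi^0$ remain of the form $\hat\varphi_j$ with $\varphi_j\in L^1(\mathbb R)$ (many of the intermediate ones being identically zero), while the top term $\psi^p$ is monic. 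That is precisely the defining identity $\psi^p+\hat\varphi_n\psi^n+\cdots+\hat\varphi_o=0$ for membership in $P^{1,p}[L^{E,\xi}_{loc}]$ as in $\textbf{(A3)}$, once we relabel and note that the coefficients of $\psi^{n+1},\dots,\psi^{p-1}$ are allowed to be zero. The same fixed interval $I_\xi$ works throughout, so no intersection-of-intervals issue arises.

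The step I expect to be the subtlest is bookkeeping rather than mathematical depth: one must check that in $\textbf{(A3)}$ the polynomial is permitted to have "missing" intermediate powers $\psi^{p-1},\dots,\psi^{n+1}$ — i.e.\ that zero is an admissible coefficient, which it is since $0=\hat 0$ and $0\in L^1(\mathbb R)$ — and that multiplying a local identity on $I_\xi\cap E$ by powers of $\psi$ preserves validity on the same set (it does, pointwise). No use of Wiener's Lemma \ref{lemma4} is needed for these inclusions; it is the division direction (lowering degrees, or inverting $\psi$ when $\psi(\xi)\neq0$) that would require it, and that is not what (\ref{exp3}) asserts. Thus the proof reduces to recording the multiplication-by-$\psi$ observation and iterating it, which I would present compactly as a single induction on the degree.
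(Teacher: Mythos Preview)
Your argument for $L^{E,\xi}_{loc}\subset P^{1,2}[L^{E,\xi}_{loc}]$ and for the intermediate steps $P^{1,m}[L^{E,\xi}_{loc}]\subset P^{1,m+1}[L^{E,\xi}_{loc}]$, $2\le m\le n-1$, is correct and in fact slightly cleaner than the paper's (the paper multiplies by both $\psi$ and an auxiliary $\hat\chi$ and adds, so as to keep the constant term nontrivial; your observation that $0=\hat 0$ makes the auxiliary $\chi$ unnecessary).

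The final inclusion $P^{1,n}[L^{E,\xi}_{loc}]\subset P^{1,p}[L^{E,\xi}_{loc}]$, however, contains a genuine gap coming from a misreading of \textbf{(A3)}. Membership in $P^{1,p}[L^{E,\xi}_{loc}]$ does not mean that $\psi$ satisfies an arbitrary monic degree-$p$ identity with $L^1$-Fourier coefficients; it means $\psi$ satisfies the specific identity
\[
\psi^p+\hat\varphi_n\psi^n+\hat\varphi_{n-1}\psi^{n-1}+\cdots+\hat\varphi_1\psi+\hat\varphi_0=0
\]
on $I_\xi\cap E$, with \emph{no} terms in the powers $\psi^{n+1},\ldots,\psi^{p-1}$ (this exponent gap mirrors the structure of $\hat\mu$ in (\ref{exp11})). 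When you multiply the degree-$n$ identity by $\psi^{p-n}$ you obtain $\psi^p+\hat\varphi_{n-1}\psi^{p-1}+\cdots+\hat\varphi_0\psi^{p-n}=0$, which for $p>n+1$ carries nonzero coefficients precisely on the forbidden powers $\psi^{p-1},\ldots,\psi^{p-n+1}$ rather than on $\psi^n,\ldots,\psi^0$. Your remark that ``the coefficients of $\psi^{n+1},\dots,\psi^{p-1}$ are allowed to be zero'' has it backwards: those coefficients \emph{must} be zero, and in your output they are not. The paper's remedy is an iterated multiply-and-subtract step: given an identity with leading term $\psi^k$ and all remaining terms confined to $\psi^n,\ldots,\psi^0$, multiply once by $\psi$ and once by the current coefficient of $\psi^n$, then subtract so that the unwanted $\psi^{n+1}$ term cancels; this raises the leading power to $\psi^{k+1}$ while keeping the lower powers in the range $\psi^n,\ldots,\psi^0$ with coefficients still of the form $\hat\phi$, $\phi\in L^1(\mathbb R)$. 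Iterating from $k=n+1$ up to $k=p$ produces the required \textbf{(A3)} identity.
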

For proving Lemma \ref{lemma5}, we need the following result which is appeared in \cite{GR}.
\begin{lemma}\cite{GR}\label{lemma20} For $p\geq3,$ the following inclusions hold.
\begin{equation}\label{exp101}
L^{E,\xi}_{loc}\subset P^{1,2}[ L^{E,\xi}_{loc}]\subset P^{1, p}[ L^{E,\xi}_{loc}].
\end{equation}
\end{lemma}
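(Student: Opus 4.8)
The plan is to deduce both inclusions in \eqref{exp101} from one algebraic device: reducing a high power of $\psi$ modulo a lower-degree monic relation, while checking that every coefficient produced remains, on the same local set $I_\xi\cap E$, the Fourier transform of an $L^1(\mathbb R)$ function. The only structural facts I need are that $\widehat{L^1(\mathbb R)}$ is closed under addition and under pointwise multiplication (since $\widehat{\varphi}\,\widehat{\phi}=\widehat{\varphi\ast\phi}$ with $\varphi\ast\phi\in L^1(\mathbb R)$) and that, by the Riemann--Lebesgue lemma, a nonzero constant is never of the form $\widehat{h}$. Notably, no inversion is involved, so Lemma \ref{lemma4} is not needed for these particular inclusions.

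For the first inclusion, let $\psi\in L^{E,\xi}_{loc}$, so $\psi=\widehat{\varphi}$ on $I_\xi\cap E$ for some $\varphi\in L^1(\mathbb R)$. Multiplying the identity $\psi-\widehat{\varphi}=0$ by $\psi$ gives $\psi^2-\widehat{\varphi}\,\psi=0$ on $I_\xi\cap E$, which is precisely a relation of the type \textbf{(A2)} for $m=2$ with $\varphi_1=-\varphi$ and $\varphi_0=0$; hence $\psi\in P^{1,2}[L^{E,\xi}_{loc}]$.

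The substantive step is $P^{1,2}[L^{E,\xi}_{loc}]\subset P^{1,p}[L^{E,\xi}_{loc}]$ for $p\ge3$. Fix $\psi\in P^{1,2}[L^{E,\xi}_{loc}]$ together with an interval $I_\xi$ and $\varphi_0,\varphi_1\in L^1(\mathbb R)$ satisfying $\psi^2=-\widehat{\varphi_1}\,\psi-\widehat{\varphi_0}$ on $I_\xi\cap E$. I would show by induction on $k\ge2$ that $\psi^k=a_k\psi+b_k$ on $I_\xi\cap E$ with $a_k,b_k\in\widehat{L^1(\mathbb R)}$, using the recursion $a_{k+1}=-a_k\widehat{\varphi_1}+b_k$ and $b_{k+1}=-a_k\widehat{\varphi_0}$ started from $a_2=-\widehat{\varphi_1}$, $b_2=-\widehat{\varphi_0}$. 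Taking $k=p$ then yields $\psi^p-a_p\psi-b_p=0$ on $I_\xi\cap E$; setting $\varphi_n=\cdots=\varphi_2=0$ and choosing $L^1$ functions whose transforms are $-a_p$ and $-b_p$ exhibits this as a relation of type \textbf{(A3)} in which only the $\psi^1$ and $\psi^0$ terms survive. Since $p\ge3$ the new relation has genuine degree $p$ with its lower terms of degrees $0,1\in F_n$, so $\psi\in P^{1,p}[L^{E,\xi}_{loc}]$.

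The step I expect to require the most care is the induction: I must verify that no bare constant is ever created, because a nonzero constant lies outside $\widehat{L^1(\mathbb R)}$. This is ensured by the fact that the base coefficients $a_2,b_2$ each carry a factor $\widehat{\varphi_1}$ or $\widehat{\varphi_0}$, and every recursion step either multiplies an existing element of $\widehat{L^1(\mathbb R)}$ by $\widehat{\varphi_1}$ or $\widehat{\varphi_0}$, or adds two such elements; consequently each $a_k$ and $b_k$ is a $\mathbb Z$-linear combination of convolution products of $\varphi_0$ and $\varphi_1$ with no constant term, and membership in $\widehat{L^1(\mathbb R)}$ is preserved at every stage. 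A final point worth recording is that all identities hold on the single local set $I_\xi\cap E$, so the passage from the degree-$2$ relation to the degree-$p$ relation needs no further shrinking of the interval.
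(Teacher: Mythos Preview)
Your proof is correct. The paper does not supply its own proof of Lemma~\ref{lemma20}; it is quoted from \cite{GR}. Your reduction of $\psi^k$ modulo the degree-$2$ relation is, however, exactly the mechanism the paper itself deploys in the proof of Lemma~\ref{lemma5}: there the relation is multiplied by $\psi$ and by the top sub-leading coefficient $\hat\varphi_n$ and the results are subtracted to kill the unwanted $\psi^{n+1}$ term, then the process is iterated. Your recursion $a_{k+1}=-a_k\widehat{\varphi_1}+b_k$, $b_{k+1}=-a_k\widehat{\varphi_0}$ is the closed form of precisely this elimination when the base relation has degree $2$, so the two arguments are the same idea in different bookkeeping. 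One minor difference: in part~(a) of Lemma~\ref{lemma5} the paper introduces an auxiliary $\hat\chi$ with $I_\xi\subset\operatorname{supp}\hat\chi$ to keep all coefficients nonzero, whereas you simply allow some $\varphi_j$ to be the zero function; since the definitions \textbf{(A2)}--\textbf{(A3)} only require $\varphi_j\in L^1(\mathbb R)$, your simpler route is perfectly legitimate and slightly cleaner.
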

\smallskip

\begin{proof}[ Proof of Lemma \ref{lemma5}]
$\bf{(a).}$ By Lemma \ref{lemma20}, it follows that $P^{1,2}[L^{E,\xi}_{loc}]\subset P^{1,3}[L^{E,\xi}_{loc}].$
Next, we assume that $P^{1,m-1}[L^{E,\xi}_{loc}]\subset P^{1, m}[L^{E,\xi}_{loc}],$ whenever $3\leq m<n+1.$
If $\psi\in P^{1, m}[L^{E,\xi}_{loc}],$ then there exists $I_\xi$ containing $\xi$ and
$h_j\in L^1(\mathbb R);~j\in F_{m-1}$ such that
\begin{equation}\label{exp102}
\psi^m+\hat{h}_{m-1}\psi^{m-1}+\cdots+\hat{h}_1\psi+\hat{h}_0=0
\end{equation}
on $I_\xi\cap E.$
Now, consider a function $\chi\in L^1(\mathbb R)$ such that $I_\xi\subset\text{supp }\hat\chi.$
After multiplying (\ref{exp102}) by $\psi$ and $\hat\chi$ separately
and adding the resultant equations, we can write
\[\psi^{m+1}+\left(\hat\chi+ \hat h_{m-1}\right)\psi^{m}+\cdots+\left(\hat\chi\hat h_1+
\hat h_o\right)\psi+\hat\chi\hat h_o=0.\]
Hence, for the appropriate choice of $\varphi_j;~j\in F_m,$ we have
\[\psi^{m+1}+\hat{\varphi}_m\psi^m+\cdots+\hat{\varphi}_1\psi+\hat{\varphi}_0=0\]
on $I_\xi\cap E.$ Thus $\psi\in P^{1, m+1}[ L^{E,\xi}_{loc}].$
\smallskip

\noindent $\bf{(b).}$ It only remains to show that $P^{1, n}[ L^{E,\xi}_{loc}]\subset P^{1,p}[ L^{E,\xi}_{loc}].$
If $\psi\in P^{1, n}[ L^{E,\xi}_{loc}],$ then there exists $I_\xi$ containing $\xi$
and $\varphi_j\in L^1(\mathbb R);~j\in F_n$ such that
\begin{equation}\label{exp17}
\psi^{n+1}+\hat{\varphi}_{n}\psi^{n}+\cdots+\hat{\varphi}_1\psi+\hat{\varphi}_0=0
\end{equation}
on $I_\xi\cap E.$ After multiplying (\ref{exp17}) by $\psi$ and $\hat{\varphi}_n$ separately,
 we can write
\[\psi^{n+2}+\left(\hat{\varphi}_{n-1}-\hat\varphi_{n}\hat\varphi_{n}\right)\psi^{n}+\cdots+
\left(\hat{\varphi}_0-\hat\varphi_1\hat\varphi_{n}\right)\psi+\left(-\hat\varphi_0\hat\varphi_{}
\right)=0.\]
Hence, for the appropriate choice of $\varphi_j\in L^1(\mathbb R);~j\in F_n$ and
induction hypothesis, we conclude that
\[\psi^{p}+\hat{\varphi}_n\psi^n+\cdots+\hat{\varphi}_1\psi+\hat{\varphi}_0=0\]
on $I_\xi\cap E.$ Thus $\psi\in P^{1, p}[ L^{E,\xi}_{loc}].$
\end{proof}

Let $\Pi(\Lambda)$ be the projection of $\Lambda$ on the $\xi$ - axis.
For partitioning the projection $\Pi(\Lambda),$ we introduce the notion of
symmetric polynomials. Since the level surface of a complete homogeneous
symmetric polynomial consists of only finitely many those points which can
be represented by an upper triangular matrix. More details will be given in
\textbf{Appendix A}.
\smallskip

For each $k\in\mathbb Z_+,$ the complete homogeneous
symmetric polynomial $h_k$ of degree $k$ is the sum of all monomials of
degree $k.$ That is,
\[h_k\left(x_1,\ldots, x_s\right)=\sum\limits_{\alpha_1+\cdots+\alpha_s=k}x_1^{\alpha_1}\cdots x_s^{\alpha_s}.\]

\smallskip

Given $\xi\in\Pi(\Lambda),$ we denote the corresponding images on the $\eta$ - axis by
\[\varSigma_{\xi}= \{\eta\in [0,2) : (\xi,\eta)\in\Lambda\}.\]
Since it is expected that the set $\varSigma_{\xi }$ may consist more than one
images which depend upon the order of winding in $\Lambda$ around the line
$\{\xi\}\times\mathbb R,$ the set $\Pi (\Lambda )$ can be decomposed into the
following disjoint sets.

\smallskip

\noindent ${\bf{(P_1).}}$ $\Pi^1(\Lambda)=\{\xi\in\Pi(\Lambda): \text{ there is unique } \eta_0\in \varSigma_{\xi}\}.$
\smallskip

Next, for $2\leq m\leq n$ we define the following $n-1$ partitioning sets.
\smallskip

\noindent ${\bf{(P_m).}}$ $\Pi^m(\Lambda)=\{\xi\in\Pi(\Lambda):\text{there are exactly $m$ distinct}~\eta_j\in\varSigma_{\xi};~j\in F_{m-1}\}.$
\smallskip

For $n+2$ distinct images $\eta_j\in[0,2),$ denote $a_j=e^{\pi i\eta_j};~j\in F_{n+1}.$
Then, in view of \textbf{Appendix A}, the remaining partitioning sets can be described
as follows.
\smallskip

\noindent ${\bf{(P_{n+1}).}}$ $\Pi^{n+1}(\Lambda)=\{\xi\in\Pi(\Lambda):$ there are at least
$n+1$ distinct $\eta_j\in\varSigma_{\xi}$ for $j\in F_n$ and if there is another
$\eta_{n+1}\in \varSigma_{\xi},$ then $h_{p-n}(a_0,\ldots,a_{n-1},a_n)=h_{p-n}(a_0,\ldots,a_{n-1},a_{n+1})\}.$
\smallskip

\noindent ${\bf{(P_{n+2}).}}$ $\Pi^{n+2}(\Lambda)=\{\xi\in\Pi(\Lambda):$ there are at least
$n+2$ distinct $\eta_j\in\varSigma_{\xi}$ for $j\in F_{n+1}$ satisfies
\begin{equation}\label{exp12}
h_{p-n}(a_0,\ldots,a_{n-1},a_n)\neq h_{p-n}(a_0,\ldots,a_{n-1},a_{n+1})\}.
\end{equation}

In this way, we have a decomposition $\Pi(\Lambda)=\bigcup\limits_{j=1}^{n+2}\Pi^j(\Lambda).$

\smallskip

Next, we derive some of the auxiliary results related to $\Pi^m(\Lambda);~2\leq m\leq n+1.$
For $m$ distinct images $\eta_j\in [0,2),$ we write $a_j=e^{\pi i\eta_j};~j\in F_{m-1}.$
Consider the system of equations
\begin{equation}\label{exp05}
A^m_\xi X=B^m_\xi,
\end{equation}
where $b_s=(a_o^s,\ldots,a_{m-1}^s);~0\leq s\leq {m-1}$ are the column vectors of $A^m_\xi$
and $B^m_\xi=-\left(a_o^m,\ldots,a_{m-1}^m\right).$ Since $\det A^m_\xi=\prod\limits_{0\leq i<j\leq m-1}(a_j-a_i)\neq0,$
 the solution $X_\xi=(\tau_o,\ldots,\tau_{m-1})$ can be expressed as

\[\tau_k(\xi)=\begin{cases}
(-1)^m\mathop{\prod\limits_{i=0}^{m-1}}a_i & \text{if}~k=0,\\
(-1)^{m-1}\mathop{\sum\limits_{i=0}^{m-1}\prod\limits_{j\neq i}}a_j & \text{if}~k=1,\\
\hspace{.3in} \vdots &  ~\vdots\\
\sum\limits_{0\leq i<j\leq m-1}a_ia_j & \text{if}~k=m-2,\\
-\sum\limits_{i=0}^{m-1}a_i & \text{if}~k=m-1.
\end{cases}\]
Observe that $\tau_k$ are constant multiples of the elementary symmetric
polynomials in $a_j;~j\in F_{m-1}.$ Therefore, by Wiener's lemma, $\tau_k$ can
be locally agreed with the Fourier transform of some functions in $L^{1}(\mathbb{R}).$

\smallskip

Thus, the following sets in the projection $\Pi(\Lambda )$ will be dispensable in
the process of getting the determining sets $\Lambda$ for $X(\Gamma)$. Eventually,
dispensable sets are those sets in $\Pi (\Lambda)$ for which we could not solve (\ref{exp11}).

\smallskip

\noindent ${\bf{P_{1^\ast}}}.$ Since each $\xi\in\Pi^1(\Lambda)$ has unique
image in $\varSigma_{\xi},$ we can define a function $a_o$ on $\Pi^1(\Lambda)$
by $a_o(\xi)= e^{\pi i\eta_0}$, where $\eta_0=\eta_0(\xi)\in \varSigma_{\xi}.$
Now, the first dispensable set can be described by
\[\Pi^{1^\ast}(\Lambda)=\left\{\xi\in\Pi^1(\Lambda):a_o\in P^{1, p}[ L^{\Pi^1(\Lambda),\xi}_{loc}]\right\}.\]

Next, for $2\leq m\leq n+1$ we define the $m$-th dispensable set $\Pi^{m^\ast}(\Lambda)$
as follows. Given $\xi\in\Pi^m(\Lambda),$ let $a_j(\xi)= e^{\pi i\eta_j},$
where $\eta_j=\eta_j(\xi)\in \varSigma_{\xi};~j\in F_{m-1}.$
\smallskip

\noindent $\bf{P^{1}_{m^\ast}.}$
Since each $\xi\in\Pi^m(\Lambda)$ has $m$ distinct images in $\varSigma_{\xi},$
we define $m$ many functions $\tau_j$ on $\Pi^m(\Lambda);j\in F_{m-1}$ such that $X_\xi=(\tau_o,\ldots,\tau_{m-1})$
is the solution of (\ref{exp05}). Thus, the $m$-th dispensable set can be defined by
\[\Pi^{m^\ast}(\Lambda)=\left\{\xi\in\Pi^m(\Lambda):\tau_j\in L^{\Pi^2(\Lambda),\xi}_{loc}; ~j\in F_{m-1}\right\}.\]

Now, for $m\leq k\leq n+1,$  the corresponding dispensable sets can  be defined by
\smallskip

\noindent $\bf{P^{2}_{m^\ast}.}$ Further, we define $k$ many functions $\tau_j$ on
$\Pi^m(\Lambda);~j\in F_{k-1}$ such that $X_\xi=(\tau_o,\ldots,\tau_{k-1})$ satisfies
$A^{m,k}_\xi X_\xi=B^{m,k}_\xi,$ where $b_s=(a_o^s,\ldots,a_{m-1}^s);~0\leq s\leq {k-1}$
are the column vectors of $A^{m,k}_\xi$ and $B^{m,k}_\xi=-\left(a_o^k,\ldots,a_{m-1}^k\right).$
Hence, an auxiliary dispensable set can be described by
\[\Pi^k_{m^\ast}(\Lambda)=\left\{\xi\in\Pi^m(\Lambda):\tau_j\in L^{\Pi^m(\Lambda),\xi}_{loc}; ~j\in F_{k-1}\right\}.\]

\smallskip

\noindent $\bf{P^{3}_{m^\ast}.}$ Finally, we define $n+1$ many functions $\tau_j$
on $\Pi^m(\Lambda);~j\in F_n$ such that  $X_\xi=\left(\tau_o,\ldots,\tau_n\right)$
satisfies $A^{m,p}_\xi X_\xi=B^{m,p}_\xi,$ where $b_s=(a_o^s,\ldots,a_{m-1}^s);~0\leq s\leq n$
are the column vectors of $A^{m,p}_\xi$ and $B^{m,p}_\xi=-\left(a_o^p,\ldots,a_{m-1}^p\right).$
Thus, the $p$-th dispensable set can be defined by
\[\Pi^p_{m^\ast}(\Lambda)=\left\{\xi\in\Pi^m(\Lambda):\tau_j\in L^{\Pi^2(\Lambda),\xi}_{loc}; ~j\in F_n\right\}.\]
\smallskip

Now, we prove the following two results that speak a sharp contrast in the
pattern of dispensable sets as compared to dispensable sets corresponding
to two or three lines results. That is, a larger value of $p$ will
increase the size of dispensable sets in case of more than three finitely
many parallel lines.

\begin{lemma}\label{lemma99}
For $2\leq m\leq n,$ the following inclusion holds.
\[\Pi^{m^\ast}(\Lambda)\subset\Pi^{m+1}_{m^\ast}(\Lambda)\subset\cdots\subset\Pi^{n}_{m^\ast}(\Lambda)\subset\Pi^p_{m^\ast}(\Lambda).\]
\end{lemma}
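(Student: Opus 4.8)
The plan is to deduce the whole string from the single inclusions $\Pi^{k}_{m^\ast}(\Lambda)\subseteq\Pi^{k+1}_{m^\ast}(\Lambda)$ for $m\le k\le n-1$ (reading $\Pi^{m}_{m^\ast}$ as $\Pi^{m^\ast}$) together with the ``jump'' $\Pi^{n}_{m^\ast}(\Lambda)\subseteq\Pi^{p}_{m^\ast}(\Lambda)$, and then to concatenate. Each of these runs on the same engine, which I would isolate first: for fixed $\xi$, the space $L^{E,\xi}_{loc}$ is an algebra under pointwise operations. It is closed under addition since $\hat\varphi_1+\hat\varphi_2=\widehat{\varphi_1+\varphi_2}$, closed under multiplication since $\hat\varphi_1\hat\varphi_2$ is a constant multiple of $\widehat{\varphi_1\ast\varphi_2}$ with $\varphi_1\ast\varphi_2\in L^1(\mathbb R)$ by Young's inequality, and it contains every constant (take the inverse Fourier transform of a smooth bump equal to that constant near $\xi$); Wiener's lemma (Lemma \ref{lemma4}) moreover supplies reciprocals of elements not vanishing at $\xi$, although the steps below are purely polynomial and do not need it. I would also record that if $E'\subseteq E$ and $\psi\in L^{E,\xi}_{loc}$, then the restriction of $\psi$ to $E'$ lies in $L^{E',\xi}_{loc}$, which lets one transfer membership between the slightly different ambient sets attached to the various dispensable sets; I would track this but suppress it below.

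For $\Pi^{k}_{m^\ast}(\Lambda)\subseteq\Pi^{k+1}_{m^\ast}(\Lambda)$, fix $\xi$ in the left-hand side, put $a_i=a_i(\xi)=e^{\pi i\eta_i(\xi)}$ for $i\in F_{m-1}$, and let $(\tau_0,\dots,\tau_{k-1})$ be the corresponding solution of $A^{m,k}_\xi X=B^{m,k}_\xi$ (for $k=m$ the unique square-Vandermonde solution that $\Pi^{m^\ast}(\Lambda)$ is built from; for $k>m$ the canonical, zero-padded solution of the underdetermined system), so that $a_i^{k}=-\sum_{s=0}^{k-1}\tau_s(\xi)\,a_i^{s}$ for every $i$, with each $\tau_s\in L^{E,\xi}_{loc}$ by hypothesis. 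Multiplying this identity by $a_i$ and using it again to eliminate the power $a_i^{k}$ that appears, a short computation gives
\[
a_i^{k+1}=\tau_{k-1}\tau_0\,a_i^{0}+\sum_{s=1}^{k-1}(\tau_{k-1}\tau_s-\tau_{s-1})\,a_i^{s},\qquad i\in F_{m-1},
\]
whence $(\widetilde{\tau}_0,\dots,\widetilde{\tau}_{k})$ with $\widetilde{\tau}_0=-\tau_{k-1}\tau_0$, $\widetilde{\tau}_s=\tau_{s-1}-\tau_{k-1}\tau_s$ for $1\le s\le k-1$, and $\widetilde{\tau}_k=0$ solves $A^{m,k+1}_\xi X=B^{m,k+1}_\xi$. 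Each $\widetilde{\tau}_s$ is a constant-coefficient polynomial in $\tau_0,\dots,\tau_{k-1}$, hence lies in $L^{E,\xi}_{loc}$ by the first paragraph, and $0\in L^{E,\xi}_{loc}$ trivially; therefore $\xi\in\Pi^{k+1}_{m^\ast}(\Lambda)$.

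For the jump $\Pi^{n}_{m^\ast}(\Lambda)\subseteq\Pi^{p}_{m^\ast}(\Lambda)$ I would iterate the same reduction. Starting from $a_i^{n}=-\sum_{s=0}^{n-1}\tau_s(\xi)\,a_i^{s}$, every power $a_i^{\ell}$ with $\ell\ge n$ is rewritten as a combination $\sum_{s=0}^{n-1}c^{(\ell)}_s(\xi)\,a_i^{s}$ whose coefficients are constant-coefficient polynomials in $\tau_0,\dots,\tau_{n-1}$; equivalently, dividing $z^{p}$ by $z^{n}+\sum_{s=0}^{n-1}\tau_s z^{s}$ expresses $a_i^{p}$ through the complete homogeneous symmetric polynomials $h_{p-n},h_{p-n+1},\dots$ in $a_0,\dots,a_{m-1}$, exactly the quantities governing the partition sets $\Pi^{n+1}(\Lambda)$ and $\Pi^{n+2}(\Lambda)$ and discussed in Appendix A. In either description $c^{(p)}_s\in L^{E,\xi}_{loc}$, so $(-c^{(p)}_0,\dots,-c^{(p)}_{n-1},0)$ solves $A^{m,p}_\xi X=B^{m,p}_\xi$ and $\xi\in\Pi^{p}_{m^\ast}(\Lambda)$. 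Concatenating the two kinds of inclusion yields Lemma \ref{lemma99}.

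I expect the genuinely delicate point to be the last paragraph: one must carry out the reduction of $z^{p}$ against the correct polynomial --- of degree $m$ when only $m\le n$ distinct values $a_i$ occur, or its zero-padded degree-$n$ companion --- and then verify that the coefficients produced really are the symmetric functions of Appendix A, so that membership in $L^{E,\xi}_{loc}$ is inherited through the Jacobi--Trudi and Newton identities. The remaining ingredients --- the algebra structure of $L^{E,\xi}_{loc}$, the one-step recursion, and the restriction bookkeeping --- are routine.
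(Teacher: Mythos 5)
Your proposal is correct and follows essentially the same route as the paper: an induction that raises the degree of the monic relation $a_i^k+\sum_{s<k}\tau_s a_i^s=0$ one step at a time (and then jumps from $n+1$ to $p$), using that $L^{E,\xi}_{loc}$ is closed under sums and products so the new coefficients stay locally Fourier transforms of $L^1$ functions. The only difference is mechanical --- you re-substitute the relation after multiplying by $a_i$ (equivalently, divide $z^p$ by the monic polynomial), whereas the paper invokes the iterations of Lemma \ref{lemma5}, which add a multiple of an auxiliary $\hat\chi$ or eliminate the top intermediate term; both yield the same conclusion.
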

\begin{proof}
If $\xi_o\in\Pi^{m^\ast}(\Lambda),$ then $\tau_j\in L^{\Pi^m(\Lambda),\xi_o}_{loc};~j\in F_{m-1}.$
Hence there exists an interval $I_{\xi_o}$ containing $\xi_o$ and  $\varphi_j\in L^1(\mathbb{R})$
such that $\tau_j=\hat{\varphi}_j;~j\in F_{m-1}$ and satisfy
\[\hat\varphi_o+\hat\varphi_1a_j+\cdots+\hat\varphi_{m-1}{a_j}^{m-1}+{a_j}^m=0\]
on $I_{\xi_o}\cap\Pi^m(\Lambda)$ for $j\in F_{m-1}.$ By similar iterations as
in the proof of Lemma \ref{lemma5}$(a),$ we infer that there exist a common set of $\psi_j\in L^1(\mathbb{R});~j\in F_m$
such that \[\hat\psi_o+\hat\psi_1a_j+\cdots+\hat\psi_m{a_j}^m+{a_j}^{m+1}=0\]
on $I_{\xi_o}\cap\Pi^m(\Lambda); ~j\in F_{m-1}.$ If we denote $\hat{\psi}_j=\tau_j,$
then $\xi_o\in\Pi^{m+1}_{m^\ast}(\Lambda).$ By induction it
follows that if $\xi_o\in\Pi^{k}_{m^\ast}(\Lambda),$ then $\xi_o\in\Pi^{k+1}_{m^\ast}(\Lambda)$
for $m+1\leq k\leq n.$ Since $\xi_o\in\Pi^{n+1}_{m^\ast}(\Lambda),$ there exist a common set of
functions $\psi_j\in L^1(\mathbb{R});~j\in F_n$ such that \[\hat\psi_o+\hat\psi_1a_j+\cdots+\hat\psi_n{a_j}^n+{a_j}^{n+1}=0\]
on $I_{\xi_o}\cap\Pi^m(\Lambda); ~j\in F_{m-1}.$ By similar iterations as in
the proof of Lemma \ref{lemma5}$(b),$ we infer that there exist a common set of
$\psi_j\in L^1(\mathbb{R});~j\in F_n$ such that \[\hat\psi_o+\hat\psi_1a_j+\cdots+\hat\psi_n{a_j}^n+{a_j}^p=0\]
on $I_{\xi_o}\cap\Pi^m(\Lambda); ~j\in F_{m-1}.$ If we denote $\hat{\psi}_j=\tau_j,$
then $\xi_o\in\Pi^p_{m^\ast}(\Lambda).$
\end{proof}

\begin{lemma}\label{lemma100}
The following inclusion holds.
\[\Pi^{{(n+1)}^\ast}(\Lambda)\subseteq\Pi^p_{{(n+1)}^\ast}(\Lambda).\] Moreover, equality
holds for $p=n+1.$
\end{lemma}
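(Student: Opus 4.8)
The plan is to mimic, in the case $m=n+1$, the iteration used to prove Lemma \ref{lemma5}(b) and Lemma \ref{lemma99}. Fix $\xi_o\in\Pi^{(n+1)^\ast}(\Lambda)$. Since $\xi_o\in\Pi^{n+1}(\Lambda)$ there are $n+1$ distinct images $\eta_j\in\varSigma_{\xi_o}$, $j\in F_n$, and with $a_j=e^{\pi i\eta_j}$ the matrix $A^{n+1}_{\xi_o}$ is an invertible Vandermonde matrix; by the definition of $\Pi^{(n+1)^\ast}(\Lambda)$ the unique solution $(\tau_0,\ldots,\tau_n)$ of $A^{n+1}_\xi X=B^{n+1}_\xi$ lies componentwise in $L^{\Pi^2(\Lambda),\xi_o}_{loc}$, that is, there are an interval $I_{\xi_o}$ and functions $\varphi_s\in L^1(\mathbb R)$ with $\tau_s=\hat\varphi_s$ and
\[
\hat\varphi_0+\hat\varphi_1a_j+\cdots+\hat\varphi_na_j^n+a_j^{n+1}=0
\]
on $I_{\xi_o}$ for every $j\in F_n$.

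First I would perform one reduction step: multiplying the identity above by $a_j$ and replacing the term $\hat\varphi_na_j^{n+1}$ via $a_j^{n+1}=-(\hat\varphi_0+\cdots+\hat\varphi_na_j^n)$, then collecting powers of $a_j$, one gets
\[
(-\hat\varphi_n\hat\varphi_0)+(\hat\varphi_0-\hat\varphi_n\hat\varphi_1)a_j+\cdots+(\hat\varphi_{n-1}-\hat\varphi_n\hat\varphi_n)a_j^n+a_j^{n+2}=0
\]
on $I_{\xi_o}$, for every $j\in F_n$. Since $L^1(\mathbb R)$ is closed under addition and convolution, each coefficient above is $\hat\psi_s$ for some $\psi_s\in L^1(\mathbb R)$ (after shrinking $I_{\xi_o}$ if necessary), so the displayed identity exhibits the unique solution of the Vandermonde system with right-hand side $-(a_0^{n+2},\ldots,a_n^{n+2})$ as a vector with entries in $L^{\Pi^2(\Lambda),\xi_o}_{loc}$. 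Iterating the step $p-n-1$ times — a finite procedure since $p\ge n+1$ is a fixed integer — yields $\psi_s\in L^1(\mathbb R)$ with $\hat\psi_0+\hat\psi_1a_j+\cdots+\hat\psi_na_j^n+a_j^p=0$ on $I_{\xi_o}$ for all $j\in F_n$; by invertibility of $A^{n+1,p}_{\xi_o}$, the vector $(\hat\psi_0,\ldots,\hat\psi_n)$ is exactly the solution $X_{\xi_o}$ of $A^{n+1,p}_\xi X=B^{n+1,p}_\xi$, so $\xi_o\in\Pi^p_{(n+1)^\ast}(\Lambda)$. This establishes the inclusion.

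For the ``moreover'' clause I would only observe that when $p=n+1$ the system $A^{n+1,p}_\xi X=B^{n+1,p}_\xi$ is literally $A^{n+1}_\xi X=B^{n+1}_\xi$ (same Vandermonde matrix, and $B^{n+1,p}_\xi=-(a_0^{n+1},\ldots,a_n^{n+1})=B^{n+1}_\xi$), while the defining local-membership conditions for $\Pi^{(n+1)^\ast}(\Lambda)$ and $\Pi^p_{(n+1)^\ast}(\Lambda)$ coincide; hence the two sets agree, with no extra argument. The only point needing care in the inclusion — a bookkeeping matter rather than a genuine obstacle — is checking that each coefficient generated along the iteration is again the Fourier transform of an $L^1$ function on a common interval around $\xi_o$; this is exactly where one uses that $(L^1(\mathbb R),\ast)$ is a Banach algebra, so that sums and products keep one inside $L^{\Pi^2(\Lambda),\xi_o}_{loc}$ (Wiener's lemma, Lemma \ref{lemma4}, is not even invoked here, since no division occurs). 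I expect the reverse inclusion to fail once $p>n+1$, because recovering the right-hand side $-a_j^{n+1}$ from $-a_j^p$ would require inverting the $\hat\varphi_s$, which is not available in general — consistent with equality being claimed only for $p=n+1$.
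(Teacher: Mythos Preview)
Your proposal is correct and follows essentially the same route as the paper: starting from the relation $\hat\varphi_0+\hat\varphi_1a_j+\cdots+\hat\varphi_na_j^n+a_j^{n+1}=0$ valid for all $j\in F_n$, you iterate the substitution step of Lemma~\ref{lemma5}(b) to raise the top exponent from $n+1$ to $p$, exactly as the paper does (it just cites ``similar iterations as in the proof of Lemma~\ref{lemma5}(b)'' without writing out the step you display). The handling of the case $p=n+1$ by noting that the two defining systems literally coincide is also the paper's argument. One cosmetic point: the identities should be stated as holding on $I_{\xi_o}\cap\Pi^{n+1}(\Lambda)$ rather than on all of $I_{\xi_o}$, since the functions $a_j$ are only defined where the images $\eta_j$ exist.
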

\begin{proof}
If $\xi_o\in\Pi^{(n+1)^\ast}(\Lambda),$ then $\tau_j\in L^{\Pi^{n+1}(\Lambda),\xi_o}_{loc}.$
Hence there exists an interval $I_{\xi_o}$ containing $\xi_o$ and  $\varphi_j\in L^1(\mathbb{R})$
such that $\tau_j=\hat{\varphi}_j;~j\in F_n$ satisfy
\[\hat{\varphi}_o+\hat{\varphi}_1a_j+\cdots+\hat{\varphi}_n{a_j}^n+{a_j}^{n+1}=0\]
on $I_{\xi_o}\cap\Pi^{n+1}(\Lambda)$ for $j\in F_n.$ By similar iterations as in the proof
of Lemma \ref{lemma5}$(b),$ we infer that there exists the common set of $\psi_j\in L^1(\mathbb{R});~j\in F_n$
such that \[\hat{\varphi}_o+\hat{\varphi}_1a_j+\cdots+\hat{\varphi}_n{a_j}^n+{a_j}^p=0\]
on $I_{\xi_o}\cap\Pi^{n+1}(\Lambda); ~j\in F_n.$ If we denote $\hat{\psi}_j=\tau_j,$
then it follows that $\xi_o\in\Pi^p_{(n+1)^\ast}(\Lambda).$
\end{proof}

Now, we state our main result of this section.

\begin{theorem}\label{th17}
Let $\Gamma=\mathbb R\times\left(F_n\cup\{p\}\right)$ and $\Lambda\subset\mathbb R^2$ be a
closed set which is $2$-periodic with respect to the second variable. Suppose
$\Pi(\Lambda)$ is dense in $\mathbb R.$ If $\left(\Gamma,\Lambda\right)$ is a
Heisenberg uniqueness pair, then the set
\[\widetilde\Pi(\Lambda)= {\Pi^{n+2}(\Lambda)}\bigcup\limits_{j=0}^n\left[{{\Pi^{(n-j+1)}(\Lambda)}\smallsetminus{\Pi^{{(n-j+1)}^\ast}(\Lambda)}}\right]\]
is dense in $\mathbb R.$
Conversely, if the set
\[\widetilde\Pi_p(\Lambda)= {\Pi^{n+2}(\Lambda)}\bigcup\limits_{j=0}^{n-1}\left[{{\Pi^{(n-j+1)}(\Lambda)}\smallsetminus{\Pi^p_{{(n-j+1)}^\ast}(\Lambda)}}\right]\cup\left[{{\Pi^{1}(\Lambda)}\smallsetminus{\Pi^{1^\ast}(\Lambda)}}\right]\]
is dense in $\mathbb R,$ then $\left(\Gamma,\Lambda\right)$ is a Heisenberg uniqueness pair.
\end{theorem}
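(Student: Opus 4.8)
The plan is to reduce the vanishing condition $\hat\mu\vert_\Lambda=0$ to a local functional equation on the projected set $\Pi(\Lambda)$ and then run the analysis stratum by stratum, following the decomposition $\Pi(\Lambda)=\bigcup_{j=1}^{n+2}\Pi^j(\Lambda)$. Fix $\mu\in X(\Gamma)$ with associated $f_0,\dots,f_{n+1}\in L^1(\mathbb R)$ as in \eqref{exp1}, so that $\hat\mu(\xi,\eta)=\sum_{j=0}^n e^{j\pi i\eta}\hat f_j(\xi)+e^{p\pi i\eta}\hat f_{n+1}(\xi)$ by \eqref{exp11}. For a fixed $\xi$, write $a=e^{\pi i\eta}$; then for each $\eta\in\varSigma_\xi$ the relation $\hat\mu(\xi,\eta)=0$ says that the polynomial $Q_\xi(z)=\sum_{j=0}^n\hat f_j(\xi)z^j+\hat f_{n+1}(\xi)z^p$ vanishes at $z=a$. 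On $\Pi^{n+2}(\Lambda)$ there are at least $n+2$ distinct roots subject to \eqref{exp12}; the symmetric-function bookkeeping of Appendix A forces $\hat f_j(\xi)=0$ for all $j$. On $\Pi^{n+1}(\Lambda)$ one gets $n+1$ roots plus the coincidence of $h_{p-n}$-values, which again pins down all coefficients. On the lower strata $\Pi^m(\Lambda)$ with $1\le m\le n$ the coefficients are only partially determined: solving the Vandermonde system \eqref{exp05} expresses $n+1-m$ of the $\hat f_j(\xi)$ in terms of the remaining ones via the elementary symmetric functions $\tau_k$ of the known roots, and by Wiener's Lemma (Lemma \ref{lemma4}) these $\tau_k$ are locally Fourier transforms of $L^1$ functions precisely away from the dispensable sets $\Pi^{m^\ast}(\Lambda)$, $\Pi^p_{m^\ast}(\Lambda)$, $\Pi^{1^\ast}(\Lambda)$.

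For the forward direction, suppose $(\Gamma,\Lambda)$ is a HUP but $\widetilde\Pi(\Lambda)$ is not dense; then its complement contains an open interval $I$. The strategy is to construct a nonzero $\mu$ supported on $\Gamma$ with $\hat\mu\vert_\Lambda=0$, contradicting the HUP hypothesis. On $I$ every point lies outside $\Pi^{n+2}(\Lambda)$ and, for each relevant $m$, inside $\Pi^{m^\ast}(\Lambda)$ whenever it lies in $\Pi^m(\Lambda)$; so on $I$ one can choose data making $\hat\mu$ vanish on $\Lambda\cap(I\times\mathbb R)$ with the $\hat f_j$ supported in $I$. Multiplying by a fixed bump function supported in $I$ and using that the $\tau_k$ agree locally with transforms of $L^1$ functions (Lemma \ref{lemma4}) produces genuine $L^1$ functions $f_j$; the inclusions of Lemma \ref{lemma5} guarantee the construction is consistent across the nested strata $P^{1,2}\subset\cdots\subset P^{1,n}\subset P^{1,p}$. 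This yields the required nonzero $\mu$.

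For the converse, assume $\widetilde\Pi_p(\Lambda)$ is dense and that $\hat\mu\vert_\Lambda=0$. For $\xi\in\Pi^{n+2}(\Lambda)$ we already have $\hat f_j(\xi)=0$ for all $j$. For $\xi$ in $\Pi^{(n-j+1)}(\Lambda)\smallsetminus\Pi^p_{(n-j+1)^\ast}(\Lambda)$ and for $\xi\in\Pi^1(\Lambda)\smallsetminus\Pi^{1^\ast}(\Lambda)$, the defining failure of the dispensable condition means that one of the functions $\tau_k$ — equivalently, since by Lemma \ref{lemma99} and Lemma \ref{lemma100} membership in the intermediate dispensable sets collapses to membership in $\Pi^p_{m^\ast}(\Lambda)$ — is \emph{not} locally a Fourier transform of an $L^1$ function at $\xi$. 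But $\tau_k$ is a constant multiple of an elementary symmetric polynomial in the $a_j(\xi)=e^{\pi i\eta_j(\xi)}$, and the coefficients $\hat f_j$ solving the linear system are (up to Wiener-Lemma inversion of a nonvanishing factor) these very $\tau_k$; the only way to reconcile "$\hat f_j$ is a genuine transform" with "$\tau_k$ is not" is $\hat f_{n+1}(\xi)=0$, which then propagates to $\hat f_j(\xi)=0$ for all $j$ on that stratum. Thus $\hat f_j=0$ on a dense subset of $\mathbb R$; by continuity of $\hat f_j$ and injectivity of the Fourier transform on $L^1(\mathbb R)$ (Riemann–Lebesgue plus density), $f_j\equiv0$ for every $j$, so $\mu=0$.

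The main obstacle is the careful handling of the symmetric-polynomial geometry on $\Pi^{n+1}(\Lambda)$ and $\Pi^{n+2}(\Lambda)$ — verifying that the level-set condition on $h_{p-n}$ in $\mathbf{(P_{n+1})}$ and its negation in $\mathbf{(P_{n+2})}$ exactly separate "coefficients forced to vanish" from "an extra root creates a genuine obstruction," which is where the upper-triangular description of the level surface in Appendix A does the real work. A secondary difficulty is bookkeeping: keeping the nested dispensable sets $\Pi^{m^\ast}\subset\Pi^{m+1}_{m^\ast}\subset\cdots\subset\Pi^p_{m^\ast}$ straight so that the forward and converse constructions use the correct (largest, resp. smallest) version, which is precisely the asymmetry between $\widetilde\Pi(\Lambda)$ and $\widetilde\Pi_p(\Lambda)$ flagged in Remark \ref{rk4}.
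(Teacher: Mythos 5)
Your sufficiency argument is essentially the paper's: work stratum by stratum through $\Pi^{n+2}(\Lambda),\Pi^{n+1}(\Lambda),\dots,\Pi^1(\Lambda)$, use the Vandermonde/symmetric-function reduction on the top stratum, and on the lower strata apply Wiener's lemma downward in $k$ to show that $\hat f_{n+1}(\xi)=0$, then $\hat f_n(\xi)=0$, etc., unless $\xi$ lands in a dispensable set. One slip in your overview: on $\Pi^{n+1}(\Lambda)$ the coefficients are \emph{not} all pinned down (a point there may have only $n+1$ independent equations in $n+2$ unknowns, or extra roots whose equations are dependent because of the $h_{p-n}$ coincidence); that is exactly why $\Pi^{n+1}(\Lambda)\smallsetminus\Pi^p_{(n+1)^\ast}(\Lambda)$, and not $\Pi^{n+1}(\Lambda)$ itself, appears in $\widetilde\Pi_p(\Lambda)$. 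You do treat it correctly later, so this is only a wording error.

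The genuine gap is in the necessity direction. You assert that if an interval $I$ misses $\widetilde\Pi(\Lambda)$, then ``on $I$ one can choose data making $\hat\mu$ vanish on $\Lambda\cap(I\times\mathbb R)$'' and that multiplying by a bump function produces the required $L^1$ functions. This is precisely the step that needs proof, and it does not follow from the definitions. Membership of $\xi$ in $\Pi^{m^\ast}(\Lambda)$ only provides witnesses $\varphi_j$ valid on a small, $\xi$-dependent interval $I_\xi\cap\Pi^m(\Lambda)$; and, more seriously, the various dispensable sets $\Pi^{1^\ast}(\Lambda),\dots,\Pi^{(n+1)^\ast}(\Lambda)$ can interlace densely inside $I$, each imposing a \emph{different} system of relations on the same functions $f_0,\dots,f_{n+1}$, with no a priori compatibility between the local witnesses for different $m$. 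The paper does not construct a global annihilating measure on $I$ at all; instead it proves (Lemmas \ref{lemma8} and \ref{lemma9}, supported by the ``pull-down'' Lemmas \ref{lemma6} and \ref{lemma17}) that the covering of $\Pi(\Lambda)\cap I_o$ by dispensable sets is self-contradictory: either some subinterval meets only a single dispensable set, in which case the convolution construction $f_k=f_{n+1}\ast\varphi_k$ does yield a counterexample measure and contradicts HUP, or an induction on the number of interlacing dispensable sets produces a subinterval forced into the non-dispensable strata $\bigcup_{j\geq m+1}\Pi^j(\Lambda)$, contradicting the covering. Your proposal has no counterpart to this interlacing analysis, which is the actual content of the necessity proof.
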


\begin{remark}\label{rk7}
If $\Gamma$ is a system of $n+2$ consecutive parallel lines, then we get $p=n+1$ and from
Lemma \ref{lemma100} it follows that $\Pi^{(n+1)^\ast}(\Lambda)=\Pi^{n+1}_{(n+1)^\ast}(\Lambda).$
Thus, the necessary and sufficient conditions in Theorem \ref{th17} will be nearly identical.
However, In view of Lemma \ref{lemma99}, it follows that $\Pi^{m^\ast}(\Lambda)$ is properly
contained in  $\Pi^{n+1}_{m^\ast}(\Lambda);~2\leq m\leq n.$ Hence, $\widetilde\Pi_{n+1}(\Lambda)$
is a proper subset of $\widetilde\Pi(\Lambda).$ Thus, a unique necessary and sufficient condition
corresponding to finitely many lines is still open.
\end{remark}

We need the following two lemmas to prove the necessary part of Theorem~\ref{th17}.
The main purpose of these lemmas is to pull down an interval from some of the
partitioning sets of the projection $\Pi(\Lambda)$. The above argument helps
to negate the assumption that $\widetilde{\Pi }(\Lambda )$ is not dense in
$\mathbb{R}$. Now, we prove the following lemmas for $2\leq m\leq n+1.$

\begin{lemma}\label{lemma6}
Suppose $I$ is an interval such that $I\cap\Pi^{m^\ast}(\Lambda)$ is dense in $I.$
Then there exists an interval $I'\subset I$ such that
$I'\subset\Pi^{m^\ast}(\Lambda)\bigcup\limits_{j=m+1}^{n+2}\Pi^j(\Lambda).$
\end{lemma}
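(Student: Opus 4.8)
The goal is to extract a subinterval $I' \subset I$ that lies entirely inside the union $\Pi^{m^\ast}(\Lambda) \cup \bigcup_{j=m+1}^{n+2} \Pi^j(\Lambda)$, given that $I \cap \Pi^{m^\ast}(\Lambda)$ is dense in $I$. The obstruction to simply taking $I' = I$ is that $I$ may also meet the lower-complexity pieces $\Pi^k(\Lambda)$ for $k < m$ (and the bad part $\Pi^k(\Lambda)\setminus\Pi^{k^\ast}(\Lambda)$ etc.), and since $\Pi(\Lambda) = \bigcup_{j=1}^{n+2}\Pi^j(\Lambda)$ is a decomposition of the whole projection, we cannot assume those pieces are absent. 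So the strategy is: first pass to a subinterval on which the ``low'' pieces $\Pi^1(\Lambda), \ldots, \Pi^{m-1}(\Lambda)$ are negligible, and then argue that on what remains, density of $\Pi^{m^\ast}(\Lambda)$ together with the Wiener-lemma machinery forces the functions $\tau_j$ to be locally Fourier transforms on a full subinterval.

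First I would use a Baire-category argument. The interval $I$ is a complete metric space; write $I = \big(I \cap \bigcup_{k<m}\Pi^k(\Lambda)\big) \cup \big(I \cap \bigcup_{k\ge m}\Pi^k(\Lambda)\big)$. If the first set were dense in every subinterval of $I$, then since it is a finite union of the $\Pi^k(\Lambda)$, some $\Pi^{k}(\Lambda)$ with $k<m$ would be non-meagre, hence dense in some subinterval $J$; but on such a $J$ one can run the standard solvability argument (as in the two- and three-line cases and in \cite{GR}) to show that the Vandermonde system $A^k_\xi X = B^k_\xi$ is locally solvable with $L^1$-Fourier-transform entries — which is exactly what membership in the relevant dispensable set says — and this would contradict $(\Gamma,\Lambda)$ being analyzed on $\Pi^{m^\ast}$. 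The cleaner route: since $I\cap\Pi^{m^\ast}(\Lambda)$ is dense in $I$ and $\Pi^{m^\ast}(\Lambda)\subset\Pi^m(\Lambda)$, the set $I\setminus\bigcup_{k<m}\Pi^k(\Lambda)$ is dense in $I$; I want a subinterval $I_1$ in which $\bigcup_{k<m}\Pi^k(\Lambda)$ is not dense, equivalently $I_1\setminus\bigcup_{k<m}\Pi^k(\Lambda)$ has nonempty interior relative to finer subintervals — this is where Baire is applied to the closed set $\Lambda$ and the structure of the $\Pi^j$.

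Next, on the subinterval $I_1$ I work with the solution functions $\tau_0(\xi),\ldots,\tau_{m-1}(\xi)$ of $A^m_\xi X = B^m_\xi$, which by the displayed formulas are constant multiples of elementary symmetric polynomials in $a_0,\dots,a_{m-1}$. At each $\xi \in I_1 \cap \Pi^{m^\ast}(\Lambda)$, by definition $\tau_j \in L^{\Pi^m(\Lambda),\xi}_{loc}$, so locally near $\xi$ there are $\varphi_j \in L^1(\mathbb R)$ with $\tau_j = \hat\varphi_j$ on an interval times $\Pi^m(\Lambda)$. The crux is a gluing/continuity step: using that $I_1\cap\Pi^{m^\ast}(\Lambda)$ is \emph{dense} in $I_1$, the local $L^1$-representations at nearby points must agree (invoke Wiener's Lemma \ref{lemma4} together with uniqueness of Fourier transforms, and the fact that $\det A^m_\xi = \prod_{i<j}(a_j - a_i) \neq 0$ so the system is non-degenerate throughout), which upgrades the pointwise-dense membership to membership on a full open subinterval $I' \subset I_1$. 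On that $I'$, every $\xi$ either lies in $\Pi^{m^\ast}(\Lambda)$ or — because the low pieces were discarded — lies in some $\Pi^j(\Lambda)$ with $j \geq m+1$, giving $I' \subset \Pi^{m^\ast}(\Lambda)\cup\bigcup_{j=m+1}^{n+2}\Pi^j(\Lambda)$ as required.

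The main obstacle I expect is the gluing step: turning ``the system is solvable with $L^1$-kernel entries at a dense set of $\xi$'' into ``solvable on a whole interval.'' The difficulty is that the local intervals $I_\xi$ and the $L^1$ functions $\varphi_j$ depend on $\xi$, and a priori there is no control on how they vary; one must exploit that $\Lambda$ is closed and $2$-periodic, that the $a_j(\xi) = e^{\pi i \eta_j(\xi)}$ can be organized continuously on small intervals (again a Baire/continuity argument on the finitely many branches of $\varSigma_\xi$), and then patch the $L^1$ representatives using Wiener's lemma to invert the transition factors. This is precisely the technical heart that was handled for three and four lines in \cite{Ba, GR}, and the proof here will follow that template, with the extra bookkeeping coming from having $m-1$ symmetric-polynomial functions $\tau_j$ to control simultaneously rather than just one or two.
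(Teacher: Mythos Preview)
There is a genuine gap in your approach. The Baire-category step you propose is both unnecessary and unjustified: the sets $\Pi^k(\Lambda)$ for $k<m$ carry no evident topological regularity (they need be neither open, closed, nor $F_\sigma$), so you cannot invoke Baire to extract a subinterval avoiding them. You also never explain why every point of your $I'$ should lie in $\Pi(\Lambda)$ at all---the lemma asserts $I'\subset\Pi(\Lambda)$, not merely $I'\cap\Pi(\Lambda)\subset\cdots$, and this is part of what must be proved. Finally, you correctly flag the ``gluing step'' as the main obstacle but give no mechanism for it; invoking Wiener's lemma and nondegeneracy of $A^m_\xi$ is not enough, because the branches $\eta_j(\xi)$ are not a priori continuous and could in principle collide.

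The key idea you are missing is a \emph{discriminant-type function}. The paper fixes $\xi_o\in I\cap\Pi^{m^\ast}(\Lambda)$ and defines $\rho(\xi)=\bigl(\det\mathcal M^{m,n+1}_\xi\bigr)^2$ for a suitable $m\times m$ matrix in the $a_j(\xi)$. Being a squared determinant, $\rho$ is \emph{symmetric} in $a_0,\ldots,a_{m-1}$; by the fundamental theorem of symmetric polynomials it is therefore a polynomial in the elementary symmetric functions, which are (up to sign) exactly the $\tau_j$. Since each $\tau_j\in L^{\Pi^m(\Lambda),\xi_o}_{loc}$, the same holds for $\rho$, so $\rho$ agrees near $\xi_o$ with some continuous $\hat\psi$, and $\rho(\xi_o)\neq0$ yields a subinterval $J$ on which the extension is nonvanishing. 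Now for \emph{any} $\xi\in J$ take $\xi_n\in J\cap\Pi^{m^\ast}(\Lambda)$ with $\xi_n\to\xi$, pass to convergent subsequences of the images $\eta_j^{(n)}$, and use closedness of $\Lambda$ to land $(\xi,\eta_j)\in\Lambda$. If two of the limit $\eta_j$ coincided, two rows of $\mathcal M^{m,n+1}$ would coincide in the limit, forcing $\rho(\xi_{n_k})\to0$, contradicting $\hat\psi(\xi)\neq0$. This single continuity argument simultaneously shows $\xi\in\Pi(\Lambda)$, rules out the low strata $\Pi^k(\Lambda)$ for $k<m$, and sets up the final step in which the same reasoning applied to the $\tau_j$ themselves upgrades $I'\cap\Pi^m(\Lambda)$ to $I'\cap\Pi^{m^\ast}(\Lambda)$. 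Your instinct that continuity is the engine is right, but without the symmetric-polynomial device you have no way to manufacture a continuous object out of the multi-valued data $\{\eta_j(\xi)\}$.
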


\begin{proof}
If $\xi_o\in I\cap\Pi^{m^\ast}(\Lambda),$ then there exist
$\tau_j\in L^{\Pi^m(\Lambda),\xi_o}_{loc};~j\in F_{m-1}$ such that
$X_{\xi_o}=(\tau_0,\ldots,\tau_{m-1})$ is the solution of (\ref{exp05}).
For $\xi\in\Pi^m(\Lambda),$ consider the matrix
\[\mathcal M^{m,n+1}_\xi=[b_o~\ldots~b_{m-2}~b_{n+1}],\]
where $b_s=(a_o^s,\ldots,a_{m-1}^s);~s\in F_{m-2}\cup\{n+1\}$ are the column vectors.
Now, we define a function $\rho$ on $\Pi^m(\Lambda)$ by
$\rho(\xi)=\left(\text{det}~\mathcal M^{m,n+1}_\xi\right)^2.$
\smallskip

Since $\tau_j;~j\in F_{m-1}$ are constant multiples of the  elementary symmetric
polynomials, by the fundamental theorem of symmetric polynomials,
$\rho$ can be expressed as a polynomial in $\tau_j;~j\in F_{m-1}.$ Moreover,
$\rho(\xi_o)\neq0.$ Hence it follows that $\rho\in L^{\Pi^m(\Lambda),\xi_o}_{loc}.$
By hypothesis, $I\cap\Pi^{m^\ast}(\Lambda)$ is dense in $I,$ there exists an interval
$I_{\xi_o}\subset I$ containing $\xi_o$ such that $\rho$ can be continuously extended
on $I_{\xi_o}.$ Thus, by continuity of $\rho$ on $I_{\xi_o},$ there exists an interval
$J\subset I_{\xi_o}$ containing $\xi_o$ such that $\rho(\xi)\neq0$ for all $\xi\in J.$

\smallskip

Consequently, $J\cap\Pi^{m^\ast}(\Lambda)$ is dense in $J$ and hence for $\xi\in J,$
there exists a sequence $\xi_n\in J\cap\Pi^{m^\ast}(\Lambda)$ such that $\xi_n\rightarrow\xi.$
Thus, the corresponding image sequences $\eta_j^{(n)}\in\varSigma_{\xi_n}\subseteq [0, 2)$
will have convergent subsequences, say $\eta_j^{(n_k)}$ which converge to $\eta_j;
~j\in F_{m-1}.$ Since the set $\Lambda$ is closed, $(\xi, \eta_j)\in\Lambda$ for
$j\in F_{m-1}.$

\smallskip

Next, we claim that all of $\eta_j;~j\in F_{m-1}$ are distinct. On the contrary,
suppose $\eta_j=\eta_k$ for some $j\neq k;~j,k\in F_{m-1}.$
Then by continuity of $\rho$ on $J,$ it follows that $\rho(\xi)=0,$ which contradicts
the fact that $\rho(\xi)\neq0$ for all $\xi\in J.$ Hence we infer that $J\subset\bigcup\limits_{j=m}^{n+2}\Pi^j(\Lambda).$
Further, using the facts that $\tau_j\in L^{\Pi^m(\Lambda),\xi_o}_{loc}$ and
 $J\cap\Pi^{m^\ast}(\Lambda)$ is dense in $J,$ the functions $\tau_j$ can be extended
continuously on an interval $I'\subset J$ containing $\xi_o$ such that $\tau_j(\xi)\neq0$
for all $\xi\in I'.$ That is, if $\xi\in I'\cap\Pi^m(\Lambda),$ then $\tau_j\in L^{\Pi^m(\Lambda),\xi}_{loc}$
and hence $\xi\in\Pi^{m^\ast}(\Lambda).$ Thus, we conclude that
$I'\subset\Pi^{m^\ast}(\Lambda)\bigcup\limits_{j=m+1}^{n+2}\Pi^j(\Lambda).$
\end{proof}

\smallskip

\begin{lemma}\label{lemma17}
Suppose $I$ be an interval such that $I\cap\Pi^p_{{(n+1)}^\ast}(\Lambda)$
is dense in $I.$ Then there exists an interval $I'\subset I$ such that
$I'\subset\bigcup\limits_{j=2}^{n+2}\Pi^j(\Lambda).$
\end{lemma}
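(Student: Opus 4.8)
The plan is to mirror the argument of Lemma~\ref{lemma6}, but now starting from the weaker hypothesis that $I\cap\Pi^p_{(n+1)^\ast}(\Lambda)$ is dense in $I$. Fix $\xi_o\in I\cap\Pi^p_{(n+1)^\ast}(\Lambda)$. By definition of $\Pi^p_{(n+1)^\ast}(\Lambda)$ there are functions $\tau_j\in L^{\Pi^{n+1}(\Lambda),\xi_o}_{loc};~j\in F_n$ with $X_{\xi_o}=(\tau_0,\ldots,\tau_n)$ solving $A^{n+1,p}_{\xi_o}X_{\xi_o}=B^{n+1,p}_{\xi_o}$, i.e.\ locally near $\xi_o$ the relation $\hat\varphi_o+\hat\varphi_1 a_j+\cdots+\hat\varphi_n a_j^{\,n}+a_j^{\,p}=0$ holds for the $n+1$ values $a_j=e^{\pi i\eta_j}$, $\eta_j\in\varSigma_\xi$, $j\in F_n$. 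The role played by the Vandermonde determinant in Lemma~\ref{lemma6} will here be played by the (square of the) generalized Vandermonde determinant $\det\mathcal M^{n+1,p}_\xi$ built from the column vectors $b_s=(a_o^s,\ldots,a_n^s)$ for $s\in F_{n-1}\cup\{p\}$; call $\rho(\xi)=\bigl(\det\mathcal M^{n+1,p}_\xi\bigr)^2$. The point is that $\rho$ is a symmetric polynomial in $a_0,\ldots,a_n$ — indeed $\det\mathcal M^{n+1,p}_\xi$ equals the ordinary Vandermonde times the Schur polynomial $s_\lambda$ for the partition $\lambda=(p-n,0,\ldots,0)$, which is exactly the complete homogeneous symmetric polynomial $h_{p-n}$ — so by the fundamental theorem of symmetric polynomials $\rho$ is a polynomial in the elementary symmetric functions, hence (via Wiener's Lemma~\ref{lemma4} and the already-noted fact that the $\tau_k$ are constant multiples of elementary symmetric polynomials) $\rho\in L^{\Pi^{n+1}(\Lambda),\xi_o}_{loc}$.

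Next I would run the same extension-and-continuity machine as in Lemma~\ref{lemma6}. Since $\rho(\xi_o)\ne0$ (the $\eta_j$, $j\in F_n$, are distinct and $p>n$, so the Schur factor $h_{p-n}$ is nonzero on a neighbourhood) and $I\cap\Pi^p_{(n+1)^\ast}(\Lambda)$ is dense in $I$, Wiener's lemma lets me continuously extend $\rho$ to an interval $I_{\xi_o}\subset I$, and continuity gives a subinterval $J\ni\xi_o$ on which $\rho$ stays nonzero. On $J$, density of $\Pi^p_{(n+1)^\ast}(\Lambda)$ together with closedness of $\Lambda$ lets me take, for each $\xi\in J$, a sequence $\xi_\nu\to\xi$ in $J\cap\Pi^p_{(n+1)^\ast}(\Lambda)$ and pass to the limit in the bounded images $\eta_j^{(\nu)}\in[0,2)$; closedness of $\Lambda$ puts the limit points $(\xi,\eta_j)$ in $\Lambda$, and the nonvanishing of $\rho$ on $J$ forces the $n+1$ limits $\eta_0,\ldots,\eta_n$ to remain distinct (a coincidence would make the Vandermonde factor, hence $\rho$, vanish). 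Therefore every $\xi\in J$ has at least $n+1$ distinct images, i.e.\ $J\subset\bigcup_{j=n+1}^{n+2}\Pi^j(\Lambda)\subset\bigcup_{j=2}^{n+2}\Pi^j(\Lambda)$, and one may even take $I'=J$.

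The one genuine subtlety — and the step I expect to be the main obstacle — is the verification that $\det\mathcal M^{n+1,p}_\xi$ really factors as (Vandermonde)$\times h_{p-n}(a_0,\ldots,a_n)$, so that the nonvanishing of $\rho$ at $\xi_o$ is automatic from the distinctness of the $\eta_j$ rather than an extra hypothesis. This is precisely the Schur-polynomial/bialternant identity behind the definitions in $(P_{n+1})$ and $(P_{n+2})$ and the content of \textbf{Appendix A}, so I would cite it from there; once that factorization is in hand, the rest is a line-for-line repetition of Lemma~\ref{lemma6}. A minor point to handle with care is that, unlike in Lemma~\ref{lemma6}, I do \emph{not} get the stronger conclusion $I'\subset\Pi^{p}_{(n+1)^\ast}(\Lambda)\cup\bigcup_{j\ge n+2}\Pi^j(\Lambda)$: the local polynomial relation only involves $n+1$ of the images, so I cannot promote $\xi\in I'\cap\Pi^{n+1}(\Lambda)$ back into the dispensable set — but this is exactly why the statement only claims $I'\subset\bigcup_{j=2}^{n+2}\Pi^j(\Lambda)$, which is all that is needed for the density argument in the proof of Theorem~\ref{th17}.
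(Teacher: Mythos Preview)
Your argument has a genuine gap at the step ``$\rho(\xi_o)\neq0$''. You justify this by saying ``the $\eta_j$, $j\in F_n$, are distinct and $p>n$, so the Schur factor $h_{p-n}$ is nonzero'', but distinctness of the $a_j$ on the unit circle does \emph{not} force $h_{p-n}(a_0,\ldots,a_n)\neq0$. For instance, with $n=1$, $p=2$ one has $h_1(a_0,a_1)=a_0+a_1$, which vanishes at $a_0=1$, $a_1=-1$. Since $\det\mathcal M^{n+1,p}_\xi=V(a_0,\ldots,a_n)\,h_{p-n}(a_0,\ldots,a_n)$, your $\rho$ can vanish at $\xi_o$, and the continuity argument that keeps the $n+1$ limits $\eta_j$ \emph{pairwise} distinct collapses. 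There is also a secondary slip: the $\tau_k$ attached to $\Pi^p_{(n+1)^\ast}(\Lambda)$ come from the system $A^{n+1,p}_\xi X=B^{n+1,p}_\xi$, and these are \emph{not} (signed) elementary symmetric polynomials --- e.g.\ $\tau_n=-h_{p-n}$ by Cramer and the bialternant identity --- so the passage ``$\rho$ is a polynomial in the elementary symmetric functions, hence $\rho\in L^{\Pi^{n+1}(\Lambda),\xi_o}_{loc}$'' does not follow directly from membership of these particular $\tau_k$ in $L^{\Pi^{n+1}(\Lambda),\xi_o}_{loc}$.

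The paper's proof avoids both issues by working not with a nonvanishing quantity but with a strict \emph{upper bound}: it uses $|\tau_n(\xi_o)|=|h_{p-n}(a_0,\ldots,a_n)|<C(p,n)$, which holds precisely because the $a_j$ are distinct (equality in the triangle inequality for the monomial sum forces all $a_j$ equal). This single function $\tau_n$ is already in $L^{\Pi^{n+1}(\Lambda),\xi_o}_{loc}$ by hypothesis, so it extends continuously to an interval $I'$ on which the strict bound persists. Then, after passing to limits of images, one only rules out the case that \emph{all} limiting $\eta_j$ coincide (since that would force $|h_{p-n}|=C(p,n)$), yielding merely that at least two of them are distinct --- hence $I'\subset\bigcup_{j=2}^{n+2}\Pi^j(\Lambda)$, exactly the stated conclusion. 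Your argument, if it worked, would give the stronger $I'\subset\bigcup_{j=n+1}^{n+2}\Pi^j(\Lambda)$; the paper's Appendix~B(c) explicitly flags that obtaining such a strengthening is open, which is another signal that the shortcut through $h_{p-n}\neq0$ cannot be had for free.
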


\begin{proof}
If $\xi_o\in I\cap\Pi^p_{{(n+1)}^\ast}(\Lambda),$ then there exist
$\tau_j\in L^{\Pi^{n+1}(\Lambda),\xi_o}_{loc};~j\in F_n$
such that $X_{\xi_o}=\left(\tau_o,\ldots,\tau_n\right)$ is the solution of
$A^{n+1}_{\xi_o} X_{\xi_o}=B^{n+1,p}_{\xi_o}.$ Let $a_j=e^{\pi i\eta_j};~j\in F_n.$
Then by a simple calculation, it follows that $\tau_n$ satisfies the estimate
\[|\tau_n(\xi_o)|=|h_{p-n}(a_o,\ldots,a_n)|<C(p,n),\]
whenever $\xi_o\in I\cap\Pi^p_{{(n+1)}^\ast}(\Lambda).$ Since
$I\cap\Pi^p_{{(n+1)}^\ast}(\Lambda)$ is dense in $I,$ $\tau_n$ can be extended continuously
on a small neighborhood containing $\xi_o.$ Thus, there exists an interval $I'\subset I$
containing ${\xi_o}$ such that
\begin{equation}\label{exp25}
|\tau_n(\xi)|<C(p,n)
\end{equation}
for all $\xi\in I'.$
\smallskip

Consequently, $I'\cap\Pi^p_{{(n+1)}^\ast}(\Lambda)$ is dense in $I'$ and hence for
$\xi\in I',$ there exists a sequence $\xi_n\in I'\cap\Pi^p_{{(n+1)}^\ast}(\Lambda)$
such that $\xi_n\rightarrow\xi.$ Thus, the corresponding image sequences
$\eta_j^{(n)}\in\varSigma_{\xi_n}\subseteq [0, 2)$ have a convergent
subsequence, say $\eta_j^{(n_k)}$ which converges to $\eta_j;~j\in F_{n}$.
Since the set $\Lambda$ is closed, we get $(\xi, \eta_j)\in\Lambda;~j\in F_{n}.$

\smallskip

We show that all of $\eta_j;~j\in F_{n}$ cannot be identical.
If not, write $a_j^{(k)}=e^{\pi i\eta_j^{(n_k)}};~j\in F_{n},$ then
\[|\tau_n(\xi_{n_k})|=|h_{p-n}(a_o^{(k)},\ldots,a_n^{(k)})|\rightarrow C(p,n).\]
Further, by the continuity of $\tau_n$ on $I'$ it follows that
$|\tau_n(\xi_n)|\rightarrow|\tau_n(\xi)|.$ That is, $|\tau_n(\xi)|=C(p,n)$
which contradicts (\ref{exp25}). Thus, $ I'\subset\bigcup\limits_{j=2}^{n+2}\Pi^j(\Lambda).$
\end{proof}

\section{Proof of Theorem~\ref{th17}}

\begin{proof}[Proof of Theorem~\ref{th17}]
We first prove the sufficient part of Theorem \ref{th17}. Suppose the
set $\widetilde\Pi_p(\Lambda)$ is dense in $\mathbb R.$ Then we show that
$(\Gamma, \Lambda)$ is a Heisenberg uniqueness pair. We claim that
$\hat{f_k}\vert_{\widetilde\Pi_p(\Lambda)}=0$ for all $k\in F_{n+1}.$
Since $\hat{f_k}$ is continuous function which vanishing on a dense set
$\widetilde\Pi_p(\Lambda),$ it follows that $\hat{f_k}\equiv0$ for all
$k\in F_{n+1}.$ Thus $\mu=0.$

\smallskip

As $\Pi(\Lambda)$ decomposed into $n+2$ disjoint sets, the proof of
the above assertion will be carried out in the following $n+2$ steps.

\smallskip

\noindent $\bf{(S_{n+2}).} $ If $\xi\in\Pi^{n+2}(\Lambda),$ then there exist at
least $n+2$ distinct $\eta_j\in\varSigma_\xi$ for $j\in F_{n+1}.$  Since $\hat\mu$
is vanishes on $\Lambda,~\hat\mu(\xi,\eta_j)=0$ for all $j\in F_{n+1}.$ Hence
$\hat{f_k}(\xi)$ for $k\in F_{n+1}$ satisfies a homogeneous system of $n+2$ equations.
Since $\xi\in\Pi^{n+2}(\Lambda)$ and  $h_{p-n}(a_0,\ldots,a_{n-1},a_n)\neq h_{p-n}(a_0,\ldots,a_{n-1},a_{n+1}),$
it follows that $\hat{f_k}(\xi)=0$ for all $k\in F_{n+1}.$

\smallskip

Next, for $2\leq m\leq n+1,$ we have the following cases.
\smallskip

\noindent $\bf{(S_m).} $ If $\xi\in\Pi^m(\Lambda),$ then there exist $m$
distinct $\eta_j\in\varSigma_\xi$ such that $\hat\mu(\xi,\eta_j)=0;$ $j\in F_{m-1}.$
That is,
\begin{equation}\label{exp09}
\hat{f_0}(\xi)+\cdots+a_j^n(\xi)\hat f_n(\xi)+a_j^p(\xi)\hat f_{n+1}(\xi)=0,
\end{equation}
where $a_j(\xi)=e^{\pi i\eta_j};~j\in F_{m-1}.$ In case if $\hat f_{n+1}(\xi)\neq0,$
then by applying the Wiener's lemma to (\ref{exp09}), we conclude that
$\xi\in\Pi^p_{m^\ast}(\Lambda).$ Thus, for
$\xi\in{\Pi^m(\Lambda)\smallsetminus\Pi^p_{m^\ast}(\Lambda)},$
we have $\hat f_{n+1}(\xi)=0.$

\smallskip

Now, we continue the above argument for $m\leq k\leq n.$

\smallskip

Suppose $\hat f_k(\xi)\neq0$ and $\hat f_s(\xi)=0;~s=k+1,\ldots,n+1,$ then by applying
the Wiener's lemma to (\ref{exp09}), we get $\xi\in\Pi^k_{m^\ast}(\Lambda).$ Thus, for
$\xi\in{\Pi^m(\Lambda)\smallsetminus\Pi^k_{m^\ast}(\Lambda)},$ we infer that $\hat{f_k}(\xi)=0.$

\smallskip

Hence for $\xi\in{\Pi^m(\Lambda)\smallsetminus\Pi^p_{m^\ast}(\Lambda)},$
we infer that $\hat{f_k}(\xi)=0$ for all $k\in F_{n+1}.$

\smallskip

\noindent $\bf{(S_1).} $ If $\xi\in\Pi^1(\Lambda),$ then there exists unique
$\eta_0\in\varSigma_\xi$ such that $\hat\mu(\xi,\eta_0)=0.$
That is,
\begin{equation}\label{exp9}
\hat{f_0}(\xi)+\cdots+a_o^n(\xi)\hat f_n(\xi)+a_o^p(\xi)\hat f_{n+1}(\xi)=0,
\end{equation}
where $a_o(\xi)=e^{\pi i\eta_0}.$ In case if $\hat f_{n+1}(\xi)\neq0,$
then by applying Wiener's lemma to (\ref{exp9}), we conclude that $a_o\in P^{1,p}[L^{\Pi^1(\Lambda),\xi}_{loc}].$
Thus for $\xi\in{\Pi^1(\Lambda)\smallsetminus\Pi^{1^\ast}(\Lambda)},$
we have $\hat f_{n+1}(\xi)=0.$
\smallskip

We repeat the above argument for $1\leq k\leq n.$
\smallskip

Suppose $\hat f_k(\xi)\neq0$ and $\hat f_s(\xi)=0;~s=k+1,\ldots,n+1,$
then by applying the Wiener's lemma to (\ref{exp9}), we conclude that
$a_o\in P^{1,k}[L^{\Pi^1(\Lambda),\xi}_{loc}].$ Hence, in view of
Lemma \ref{lemma5}, it follows that $\xi\in\Pi^{1^\ast}(\Lambda).$
Thus, for $\xi\in{\Pi^1(\Lambda)\smallsetminus\Pi^{1^\ast}(\Lambda)},$
we infer that $\hat{f_k}(\xi)=0$ for all $k\in F_{n+1}.$ This completes
the proof of sufficient condition.
\end{proof}
\smallskip

Now, we prove the necessary part of Theorem \ref{th17}. Suppose $\left(\Gamma, \Lambda\right)$
is a Heisenberg uniqueness pair. Then we claim that the set $\widetilde\Pi(\Lambda)$
is dense in $\mathbb R.$ We observe that this is possible if the dispensable sets
$\Pi^{j^\ast}(\Lambda);~j\in F_{n+1}\smallsetminus\{0\}$ interlace to each other,
though these sets are disjoint among themselves.

\smallskip

If possible, suppose $\widetilde\Pi(\Lambda)$ is not dense in $\mathbb R.$ Then there
exists an open interval $I_o\subset\mathbb R$ such that $I_o\cap\widetilde\Pi(\Lambda)$
is empty. This, in turn, implies that
\begin{equation}\label{exp8}
\Pi(\Lambda)\cap I_o=\left(\bigcup_{j=1}^{n+1}\Pi^{j^\ast}(\Lambda)\right)\cap I_o.
\end{equation}
From (\ref{exp8}), we conclude that $I_o$ intersects only  the dispensable sets.
Now, the remaining part of the proof of Theorem \ref{th17} is a consequence of the
following two lemmas which provide the interlacing property of the dispensable sets
$\Pi^{j^\ast}(\Lambda);~j\in F_{n+1}\smallsetminus\{0\}.$

\begin{lemma}\label{lemma8}
There does not exist any interval $J\subset I_o$ such that
$\Pi(\Lambda)\cap J$ is contained in $\Pi^{j^\ast}(\Lambda);~j\in F_{n+1}\smallsetminus\{0\}.$
\end{lemma}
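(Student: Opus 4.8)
The plan is to argue by contradiction, using the interlacing machinery encoded in Lemmas~\ref{lemma5}, \ref{lemma99}, \ref{lemma6} and \ref{lemma17}. Suppose, on the contrary, that there is an interval $J\subset I_o$ with $\Pi(\Lambda)\cap J\subset\Pi^{j^\ast}(\Lambda)$ for some fixed $j\in F_{n+1}\smallsetminus\{0\}$. I would split into the two cases $2\le j\le n$ and $j=1$, treating also $j=n+1$ separately, since the dispensable sets behave differently at the two ends of the chain.

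First consider $2\le j\le n+1$. Shrinking $J$ if necessary, I may assume $J\cap\Pi^{j^\ast}(\Lambda)$ is dense in $J$ (indeed $\Pi^{j^\ast}(\Lambda)\supseteq\Pi(\Lambda)\cap J$ is dense in $J$, as $\Pi(\Lambda)$ is dense). By Lemma~\ref{lemma6} (applied with $m=j$ when $j\le n$) there is a subinterval $I'\subset J$ with $I'\subset\Pi^{j^\ast}(\Lambda)\cup\bigcup_{k=j+1}^{n+2}\Pi^k(\Lambda)$; for the borderline value $j=n+1$ I would use Lemma~\ref{lemma17} together with Lemma~\ref{lemma100} to get $I'\subset\bigcup_{k=2}^{n+2}\Pi^k(\Lambda)$ instead. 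In either case, since $I'\subset I_o$ and $I_o\cap\widetilde\Pi(\Lambda)=\emptyset$, the identity \eqref{exp8} forces $\Pi(\Lambda)\cap I'\subset\bigcup_{k=1}^{n+1}\Pi^{k^\ast}(\Lambda)$. Combining this with the inclusion just obtained, every $\xi\in I'\cap\Pi^k(\Lambda)$ with $k>j$ must actually lie in $\Pi^{k^\ast}(\Lambda)$. I then push the argument one notch up the chain: applying Lemma~\ref{lemma6} again on $I'$ with $m=j+1$ (after passing to a further subinterval on which $I'\cap\Pi^{(j+1)^\ast}(\Lambda)$ is dense, which is automatic since $\Pi(\Lambda)$ is dense and the only partitioning sets available above $\Pi^{j^\ast}$ are the higher ones), I obtain a nested interval contained in $\Pi^{(j+1)^\ast}(\Lambda)\cup\bigcup_{k=j+2}^{n+2}\Pi^k(\Lambda)$. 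Iterating finitely many times exhausts the partitioning sets $\Pi^{j}(\Lambda),\ldots,\Pi^{n+1}(\Lambda)$ and lands me on an interval $I''$ with $\Pi(\Lambda)\cap I''\subset\Pi^{n+2}(\Lambda)$. But $\Pi^{n+2}(\Lambda)\subset\widetilde\Pi(\Lambda)$, contradicting $I''\subset I_o$ and $I_o\cap\widetilde\Pi(\Lambda)=\emptyset$.

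For $j=1$ the same scheme works but with the first step replaced by a direct argument for $\Pi^{1^\ast}(\Lambda)$: if $\Pi(\Lambda)\cap J\subset\Pi^{1^\ast}(\Lambda)$, then each $\xi\in J$ has a unique image, the function $a_0(\xi)=e^{\pi i\eta_0(\xi)}$ lies in $P^{1,p}[L^{\Pi^1(\Lambda),\xi}_{loc}]$, hence by Wiener's Lemma~\ref{lemma4} and the polynomial relation it is locally a Fourier transform away from its (finitely many) zeros, so $a_0$ extends continuously to a subinterval $I'\subset J$. Continuity of $a_0=e^{\pi i\eta_0}$ forces the image $\eta_0$ to vary continuously, and a limiting argument with the closedness of $\Lambda$ (exactly as in the proofs of Lemmas~\ref{lemma6} and~\ref{lemma17}) shows that on a slightly smaller interval every point of $\Pi(\Lambda)$ still has a single image, so $I'\subset\Pi^1(\Lambda)$ with $a_0\in L^{\Pi^1(\Lambda),\xi}_{loc}$; then $\xi\in\Pi^{1^\ast}(\Lambda)$ identically on $I'$, and from here I re-enter the chain argument above at level $j=2$ and run it to the top, again reaching $\Pi^{n+2}(\Lambda)$ and the contradiction.

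The main obstacle I anticipate is the bookkeeping in the inductive step: at each stage one must verify that passing to the next dispensable level is legitimate, i.e.\ that on the current subinterval the relation defining $\Pi^{k^\ast}(\Lambda)$ (or $\Pi^p_{k^\ast}(\Lambda)$ via Lemma~\ref{lemma99}) is available so that Lemma~\ref{lemma6} applies. This rests on the facts that $\Pi(\Lambda)$ is dense, that \eqref{exp8} pins $\Pi(\Lambda)\cap I_o$ inside the union of the dispensable sets, and that the $\tau_k$ are constant multiples of elementary symmetric polynomials (hence locally Fourier transforms by Wiener's lemma away from their zeros); making the "finitely many zeros/continuous extension" step rigorous on the shrinking intervals, uniformly across the $n+2$ levels, is the delicate part. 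Everything else is a finite iteration of Lemmas~\ref{lemma6} and~\ref{lemma17}.
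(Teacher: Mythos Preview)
Your approach has a genuine gap: the iterative ``climb'' never gets started. Under the hypothesis $\Pi(\Lambda)\cap J\subset\Pi^{j^\ast}(\Lambda)$, every point of $\Pi(\Lambda)$ inside $J$ already lies in $\Pi^j(\Lambda)$; there are no points from higher levels present. When you apply Lemma~\ref{lemma6} and obtain $I'\subset\Pi^{j^\ast}(\Lambda)\cup\bigcup_{k=j+1}^{n+2}\Pi^k(\Lambda)$, you also have $I'\subset J$ and (as a consequence of that inclusion) $I'\subset\Pi(\Lambda)$; combining with the hypothesis forces $I'\subset\Pi(\Lambda)\cap J\subset\Pi^{j^\ast}(\Lambda)$. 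Hence $I'\cap\Pi^{(j+1)^\ast}(\Lambda)=\emptyset$, not dense, and the hypothesis of Lemma~\ref{lemma6} at level $m=j+1$ is unavailable. The claim ``the only partitioning sets available above $\Pi^{j^\ast}$ are the higher ones'' is false---none of the higher sets meet $I'$ at all. What you have written is essentially the inductive mechanism behind Lemma~\ref{lemma9} (the case $s\ge 2$), but that mechanism uses Lemma~\ref{lemma8} as its base case; it cannot be used to prove the base case itself.

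The paper's proof takes an entirely different route and, crucially, invokes the HUP hypothesis on $(\Gamma,\Lambda)$, which your argument never uses. One fixes $\xi_o\in\Pi(\Lambda)\cap J\subset\Pi^{j^\ast}(\Lambda)$ and, via Lemma~\ref{lemma99} or Lemma~\ref{lemma100} (or directly from the definition of $\Pi^{1^\ast}(\Lambda)$ when $j=1$), obtains $\varphi_k\in L^1(\mathbb R)$, $k\in F_n$, and an interval $I_{\xi_o}\subset J$ on which $a_l^p+\hat\varphi_n a_l^n+\cdots+\hat\varphi_1 a_l+\hat\varphi_0=0$ for every available image $a_l$. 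Choosing $f_{n+1}\in L^1(\mathbb R)$ with $\hat f_{n+1}(\xi_o)\neq0$ and $\text{supp}\,\hat f_{n+1}\subset I_{\xi_o}$, and setting $f_k=f_{n+1}\ast\varphi_k$ for $k\in F_n$, one manufactures a nonzero $\mu\in X(\Gamma)$ with $\hat\mu|_\Lambda=0$, contradicting that $(\Gamma,\Lambda)$ is a HUP. This explicit construction of a counterexample measure is the missing idea.
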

\begin{proof}
On the contrary, suppose there exists an interval $J\subset I_o$ such that
$\Pi(\Lambda)\cap J\subset\Pi^{j^\ast}(\Lambda)$ for some $j\in F_{n+1}\smallsetminus\{0\}.$ Since
$\Pi^{j^\ast}(\Lambda);~j\in F_{n+1}\smallsetminus\{0\}$ are disjoint, there could be following $n+1$
possibilities.
 \smallskip

\noindent $\bf{(a).}$ If $\xi_o\in\Pi(\Lambda)\cap J\subset\Pi^{1^\ast}(\Lambda),$
then $a_0\in P^{1,p}[L^{\Pi^1(\Lambda),\xi_o}_{loc}]$ and hence there exists
an interval $I_{\xi_o}\subset J$ containing $\xi_o$ and $\varphi_k\in L^1(\mathbb R);~k\in F_n$ such that
\[a_o^p+\hat{\varphi}_na_o^n+\cdots+\hat{\varphi}_1{a_o}+\hat{\varphi}_o=0\]
on $I_{\xi_o}\cap\Pi^1(\Lambda).$ Now, consider a function $f_{n+1}\in L^1(\mathbb R)$
such that $\hat f_{n+1}(\xi_o)\neq0$ and $\text{supp }\hat f_{n+1}\subset I_{\xi_o}.$
Let $f_k=f_{n+1}\ast\varphi_k;~k\in F_n,$ then we can construct a non-zero finite Borel
measure $\mu$ which is supported on $\Gamma$ that satisfies
\[\hat\mu(\xi,\eta)=\hat{f_0}(\xi) +
a_0(\xi)\hat{f_1}(\xi) +\cdots+a_0^n(\xi)\hat f_n(\xi) +a_0^p(\xi)\hat f_{n+1}(\xi)=0\]
for all $\xi\in I_{\xi_o}\cap\Pi^{1^\ast}(\Lambda),$ where $\eta\in\varSigma_\xi.$
In addition, from (\ref{exp8}) we conclude that $I_\xi\cap\Pi(\Lambda)=I_\xi\cap\Pi^{1^\ast}(\Lambda).$
This, in turn, implies $\hat\mu\vert_\Lambda=0.$
However, $\mu$ is a non-zero measure, which contradicts the fact that $\left(\Gamma, \Lambda\right)$ is a HUP.

\smallskip
Next, for $2\leq m\leq n$ we have the following cases.

\smallskip

\noindent $\bf{(b).}$  If $\xi_o\in\Pi(\Lambda)\cap J\subset\Pi^{m^\ast}(\Lambda),$ then
by Lemma \ref{lemma99}, $\xi_o\in\Pi^p_{m^\ast}(\Lambda)$ and hence there exists an  interval
$I_{\xi_o}\subset J$ containing $\xi_o$ and $\varphi_k\in L^1(\mathbb R);~k\in F_n$ such that
\[{a_j}^p+\hat{\varphi}_n{a_j}^n+\cdots+\hat{\varphi}_1{a_j}+\hat{\varphi}_o=0\]
on $I_{\xi_o}\cap\Pi^m(\Lambda)$ for $j\in F_{m-1}.$ Let $f_{n+1}\in L^1(\mathbb R)$ be such that
$\hat f_{n+1}(\xi_o)\neq0$ and $\text{supp }\hat f_{n+1}\subset I_{\xi_o}.$ Consider
$f_k=f_{n+1}\ast\varphi_k;~k\in F_n.$ Then we can construct a non-zero finite Borel measure $\mu$
which satisfies \[\hat\mu(\xi,\eta_j)=\hat{f_0}(\xi) +
a_j(\xi)\hat{f_1}(\xi) +\cdots+a_j^n(\xi)\hat f_n(\xi) +a_j^p(\xi)\hat f_{n+1}(\xi)=0\]
for all $\xi\in I_{\xi_o}\cap\Pi^{m^\ast}(\Lambda),$ where $\eta_j\in\varSigma_{\xi};~j\in F_{m-1}.$
Since $I_\xi\cap\Pi(\Lambda)=I_\xi\cap\Pi^{m^\ast}(\Lambda),$ it follows that
$\hat\mu\vert_\Lambda=0.$ This contradicts the fact that $\left(\Gamma, \Lambda\right)$ is a HUP.

\smallskip

\noindent $\bf{(c).} $ If $\xi_o\in\Pi(\Lambda)\cap J\subset\Pi^{(n+1)^\ast}(\Lambda),$ then
by Lemma \ref{lemma100}, $\xi_o\in\Pi^p_{(n+1)^\ast}(\Lambda)$ and hence there exists an  interval
$I_{\xi_o}\subset J$ containing $\xi_o$ and $\varphi_k\in L^1(\mathbb R);~k\in F_n$ such that
\[{a_j}^p+\hat{\varphi}_n{a_j}^n+\cdots+\hat{\varphi}_1{a_j}+\hat{\varphi}_o=0\]
on $I_{\xi_o}\cap\Pi^{n+1}(\Lambda)$ for $j\in F_n.$ Let $f_{n+1}\in L^1(\mathbb R)$ be such that
$\hat f_{n+1}(\xi_o)\neq0$ and $\text{supp }\hat f_{n+1}\subset I_{\xi_o}.$ Let
$f_k=f_{n+1}\ast\varphi_k;~k\in F_n.$ Then we can construct a non-zero finite Borel measure $\mu$ which satisfies
\[\hat\mu(\xi,\eta_j)=\hat{f_0}(\xi) +
a_j(\xi)\hat{f_1}(\xi) +\cdots+a_j^n(\xi)\hat f_n(\xi) +a_j^p(\xi)\hat f_{n+1}(\xi)=0\]
for all $\xi\in I_{\xi_o}\cap\Pi^{(n+1)^\ast}(\Lambda),$ where $\eta_j\in\varSigma_{\xi};~j\in F_n.$
Since $I_\xi\cap\Pi(\Lambda)=I_\xi\cap\Pi^{(n+1)^\ast}(\Lambda),$ it follows that
$\hat\mu\vert_\Lambda=0.$ This contradicts the fact that $\left(\Gamma, \Lambda\right)$ is a HUP.

\end{proof}

Next, we prove that there does not exist an interval $J\subset I_o$
which intersects more than one dispensable sets. Let $2\leq s\leq n+1.$
\begin{lemma}\label{lemma9}
There does not exist any interval $J\subset I_o$ such that $\Pi(\Lambda)\cap J$
is contained in $\bigcup\limits_{k=1}^s\Pi^{{n_k}^\ast}(\Lambda);~{n_k}\in F_{n+1}\smallsetminus\{0\}$
with $n_1<\cdots<n_s.$
\end{lemma}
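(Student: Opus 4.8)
The plan is to argue by contradiction in the same spirit as Lemma \ref{lemma8}, but now reducing the case of several interlacing dispensable sets to a single one by a pigeonhole/Baire-category argument combined with the ``pull-down'' Lemmas \ref{lemma6} and \ref{lemma17}. So suppose, for contradiction, that there is an interval $J\subset I_o$ with $\Pi(\Lambda)\cap J\subset\bigcup_{k=1}^s\Pi^{n_k^\ast}(\Lambda)$, where $2\leq s\leq n+1$ and $n_1<\cdots<n_s$. First I would note that $\Pi(\Lambda)\cap J$ is itself dense in $J$: indeed $I_o$ was chosen so that $\Pi(\Lambda)\cap I_o=\bigcup_{j=1}^{n+1}\Pi^{j^\ast}(\Lambda)\cap I_o$ by \eqref{exp8}, and since $\Pi(\Lambda)$ is dense in $\mathbb R$ by hypothesis, its trace on $J$ is dense in $J$. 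Hence $J=\overline{\bigcup_{k=1}^s\bigl(\Pi^{n_k^\ast}(\Lambda)\cap J\bigr)}$, and by the Baire category theorem applied to the complete metric space $\overline{\Pi(\Lambda)\cap J}=\overline J$, at least one of the sets $\Pi^{n_k^\ast}(\Lambda)\cap J$ is dense in some subinterval $J_1\subset J$. Fix that index $k$ and call $m=n_k$.

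Now I would invoke the pull-down lemmas. If $m\leq n$, then since $\Pi^{m^\ast}(\Lambda)\cap J_1$ is dense in $J_1$, Lemma \ref{lemma6} produces an interval $I'\subset J_1$ with $I'\subset\Pi^{m^\ast}(\Lambda)\cup\bigcup_{j=m+1}^{n+2}\Pi^j(\Lambda)$. But $I'\subset I_o$, and on $I_o$ we have $\Pi(\Lambda)\cap I_o=\bigcup_{j=1}^{n+1}\Pi^{j^\ast}(\Lambda)$, so $I'\cap\Pi^j(\Lambda)=I'\cap\bigl(\Pi^j(\Lambda)\cap\Pi^{j^\ast}(\Lambda)\bigr)$ for each $j\geq m+1$; using the inclusions among the dispensable sets from Lemma \ref{lemma99} (and Lemma \ref{lemma100} for $j=n+1$) together with the fact that the $\Pi^{j^\ast}(\Lambda)$ are pairwise disjoint, this forces $\Pi(\Lambda)\cap I'\subset\Pi^{m^\ast}(\Lambda)$ — i.e. $\Pi(\Lambda)\cap I'$ lies in the single dispensable set $\Pi^{m^\ast}(\Lambda)$. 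That is precisely the situation excluded by Lemma \ref{lemma8}, a contradiction. If instead $m=n+1$, then $\Pi^{(n+1)^\ast}(\Lambda)\subseteq\Pi^p_{(n+1)^\ast}(\Lambda)$ by Lemma \ref{lemma100}, so $\Pi^p_{(n+1)^\ast}(\Lambda)\cap J_1$ is dense in $J_1$, and Lemma \ref{lemma17} yields an interval $I'\subset J_1$ with $I'\subset\bigcup_{j=2}^{n+2}\Pi^j(\Lambda)$; arguing as before on $I_o$, this again forces $\Pi(\Lambda)\cap I'$ into a single dispensable set $\Pi^{j^\ast}(\Lambda)$ for some $j\in\{2,\dots,n+1\}$, contradicting Lemma \ref{lemma8}.

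The main obstacle I anticipate is the bookkeeping in the reduction step: after the Baire-category argument singles out one $\Pi^{m^\ast}(\Lambda)$ as ``locally dense'' and the pull-down lemma hands back an interval meeting only $\Pi^{m^\ast}(\Lambda)\cup\bigcup_{j>m}\Pi^j(\Lambda)$, one must carefully use that inside $I_o$ every point of $\Pi(\Lambda)$ already lies in some dispensable set, and then chase the disjointness together with the chains of inclusions in Lemmas \ref{lemma99}–\ref{lemma100} to collapse the union of $\Pi^j$'s down to the lone set $\Pi^{m^\ast}(\Lambda)$; keeping track of which index survives (and of the degenerate endpoint cases $m=2$ and $m=n+1$, where the pull-down lemma has a slightly different form) is where the argument is most error-prone. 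Once the reduction to a single dispensable set on a subinterval is done, the contradiction is immediate from Lemma \ref{lemma8}, so no further measure construction is needed here.
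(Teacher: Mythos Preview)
There is a genuine gap in your reduction step. After Baire singles out an index $m=n_k$ with $\Pi^{m^\ast}(\Lambda)$ dense in $J_1$ and Lemma~\ref{lemma6} returns an interval $I'\subset J_1$ with $I'\subset\Pi^{m^\ast}(\Lambda)\cup\bigcup_{j=m+1}^{n+2}\Pi^{j}(\Lambda)$, you assert that Lemmas~\ref{lemma99}--\ref{lemma100} together with the pairwise disjointness of the $\Pi^{j^\ast}(\Lambda)$ force $\Pi(\Lambda)\cap I'\subset\Pi^{m^\ast}(\Lambda)$. They do not. Lemmas~\ref{lemma99} and~\ref{lemma100} only give chains $\Pi^{m^\ast}(\Lambda)\subset\Pi^{m+1}_{m^\ast}(\Lambda)\subset\cdots\subset\Pi^{p}_{m^\ast}(\Lambda)$ \emph{within the same partitioning index} $m$; they say nothing relating $\Pi^{j^\ast}(\Lambda)$ and $\Pi^{j'^\ast}(\Lambda)$ for $j\neq j'$, which indeed sit inside disjoint $\Pi^{j}(\Lambda)$. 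Inside $I_o$ the terms $\Pi^{j}(\Lambda)$ with $m<j\leq n+1$ reduce to $\Pi^{j^\ast}(\Lambda)$, so all you actually obtain is $\Pi(\Lambda)\cap I'\subset\Pi^{m^\ast}(\Lambda)\cup\bigcup_{j=m+1}^{n+1}\Pi^{j^\ast}(\Lambda)$, still a union of several dispensable sets. Since Baire may well hand you the \emph{smallest} index $m=n_1$, this gives no reduction at all, and the appeal to Lemma~\ref{lemma8} is unjustified. The case $m=n+1$ via Lemma~\ref{lemma17} is even weaker, since that lemma only lands you in $\bigcup_{j=2}^{n+2}\Pi^{j}(\Lambda)$.

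The paper avoids this by dispensing with Baire and arguing by induction on $s$, always working with the \emph{largest} index $n_s$. The inductive hypothesis (no interval has its projection contained in $s-1$ dispensable sets) forces $\Pi^{n_s^\ast}(\Lambda)\cap J$ to be dense in $J$: otherwise some subinterval would meet only $\bigcup_{k<s}\Pi^{n_k^\ast}(\Lambda)$. Applying Lemma~\ref{lemma6} with $m=n_s$ then yields $I'\subset\Pi^{n_s^\ast}(\Lambda)\cup\bigcup_{j>n_s}\Pi^{j}(\Lambda)$; but the standing assumption $\Pi(\Lambda)\cap J\subset\bigcup_{k=1}^{s}\Pi^{n_k^\ast}(\Lambda)\subset\bigcup_{k=1}^{s}\Pi^{n_k}(\Lambda)$ involves only indices $\leq n_s$, so the tail $\bigcup_{j>n_s}\Pi^{j}(\Lambda)$ contributes nothing on $I'$, and one is left with $\Pi(\Lambda)\cap I'\subset\Pi^{n_s^\ast}(\Lambda)$, contradicting Lemma~\ref{lemma8}. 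The point you are missing is precisely this: the pull-down lemma only kills the indices \emph{below} $m$, so to collapse to a single set you must start from the top.
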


\begin{proof}
$(i).$ For $s=2,$ suppose there exists an interval $J\subset I_o$ such that
$\Pi(\Lambda)\cap J\subset\Pi^{{n_1}^\ast}(\Lambda)\cup\Pi^{{n_2}^\ast}(\Lambda).$
Then, from (\ref{exp8}) we get
\begin{equation}\label{exp18}
J\cap\Pi(\Lambda)=J\cap\left(\Pi^{{n_1}^\ast}(\Lambda)\cup\Pi^{{n_2}^\ast}(\Lambda)\right).
\end{equation}
We claim that $J\cap\Pi^{{n_2}^\ast}(\Lambda)$ is dense in $J.$ If not,
suppose there exists an interval $I\subset J$ such that
$\Pi^{{n_2}^\ast}(\Lambda)\cap I=\varnothing.$ Then from (\ref{exp8}), we have
$I\cap\Pi(\Lambda)=I\cap\Pi^{{n_1}^\ast}(\Lambda)\subset\Pi^{{n_1}^\ast}(\Lambda)$
which contradicts Lemma \ref{lemma8}. Hence, by Lemma \ref{lemma6}, there exists
an interval $I'\subset J$ such that
$I'\subset\Pi^{{n_2}^\ast}(\Lambda)\bigcup\limits_{j={n_2}+1}^{n+2}\Pi^j(\Lambda)$
which contradicts that $I_o$ intersects only  the dispensable sets.
\smallskip

$(ii).$ Let $2<s\leq n+1.$ Then for $2\leq m<s,$ we assume that there does not
exist any interval $J\subset I_o$ such that
\begin{equation}\label{exp78}
\Pi(\Lambda)\cap J\subset\bigcup\limits_{k=1}^{m}\Pi^{{n_k}^\ast}(\Lambda);~{n_k}\in F_{n+1}\smallsetminus\{0\}.
\end{equation}
We claim that, there does not exist any interval $J\subset I_o$ such that
\begin{equation}\label{exp79}
\Pi(\Lambda)\cap J\subset\bigcup\limits_{k=1}^{m+1}\Pi^{{n_k}^\ast}(\Lambda);~{n_k}\in F_{n+1}\smallsetminus\{0\}.
\end{equation}
If possible, suppose there exists an interval $J\subset I_o$ such that
(\ref{exp79}) holds. Then $J\cap\Pi^{n_{m+1}^\ast}(\Lambda)$ is dense in $J.$
On contrary, suppose there exists an interval $I\subset J$ such that
$\Pi^{n_{m+1}^\ast}(\Lambda)\cap I=\varnothing.$ Then by (\ref{exp8}), it follows that
$I\cap\Pi(\Lambda)\subset\bigcup\limits_{k=1}^m\Pi^{{n_k}^\ast}(\Lambda),$
which in turn contradicts the assumption. Hence, by Lemma \ref{lemma6}, there exists
an interval $I'\subset J$ such that
$I'\subset\Pi^{n_{m+1}^\ast}(\Lambda)\bigcup\limits_{j=n_{m+1}+1}^{n+2}\Pi^j(\Lambda)$
which contradict the assumption that $I_o$ intersects only the dispensable sets.
This completes the proof of Theorem \ref{th17}.
\end{proof}

\smallskip

\section{Appendix A. Construction of partitions of $\Pi(\Lambda)$}
In this section, we elaborate the use of symmetric homogeneous polynomials
in reducing the following matrix to an upper triangular matrix that has crucial
role in considering the $p$-th line at large.

Consider the homogeneous system of linear equations $AX=0,$ where
entries of $A$ are satisfying
\[ a_{ij}=\begin{cases}
        \beta_i^j  & \text{ if }~(i,j)\in F_{n+1}\times F_n,\\
         \beta_i^p  & \text{ if }~j=n+1
   \end{cases}
\]
with $|\beta_i|=1;~i\in F_{n+1}.$ Next, we reduce the matrix $A$ to an
upper triangular matrix by using the notion of the symmetric polynomial.

\smallskip

For each $k\in\mathbb Z_+,$ the complete homogeneous symmetric polynomial $h_k$
of degree $k$ is the sum of all monomials of degree $k$ which can be derived
from the generating function identity
\[\sum\limits_{k=0}^{\infty}h_k\left(x_1,\ldots, x_s\right)t^k=\prod\limits_{i=1}^{s}\frac{1}{1-x_i t}. \]
For more details, we refer to \cite{MIG}. Notice that
\begin{eqnarray*}
&&\prod\limits_{i=1}^{s}\left(\frac{1}{1-x_i t}\right)\left(\frac{1}{1-xt}\right)-\prod\limits_{i=1}^{s}\left(\frac{1}{1-x_i t}\right)\left(\frac{1}{1-yt}\right)\\
&=& t(x-y)\prod\limits_{i=1}^{s}\left(\frac{1}{1-x_i t}\right)\left(\frac{1}{1-xt}\right)\left(\frac{1}{1-yt}\right).
\end{eqnarray*}
By equating the coefficients of $t^k$ in both sides, we get the identity
\begin{equation}\label{exp5}
h_k\left(\bar x,x\right)-h_k\left(\bar x,y\right)=(x-y)h_{k-1}\left(\bar x,x,y\right),
\end{equation}
where $\bar x=\left(x_1,\ldots,x_s\right).$ By applying the chain of
alternative row operations:
\begin{align*}
R'_i &= R_i-R_0;~i\in F_{n+1}\smallsetminus\{0\},\\
R''_i &= R'_i-\left(\frac{{\beta}_i-{\beta}_0}{{\beta}_1-{\beta}_0}\right)R'_1;~i\in F_{n+1}\smallsetminus\{0,1\}
\end{align*}
and the identity (\ref{exp5}), we obtain an upper triangular matrix with $(n+1,n+1)^\text{th}$ entry
equals to
\[\left(h_{p-n}\left({\beta}_0,\ldots,{\beta}_{n-1}, {\beta}_n\right)-h_{p-n}\left({\beta}_0,\ldots, {\beta}_{n-1},{\beta}_{n+1} \right)\right)\prod\limits_{j=0}^{n-1}({\beta}_{n+1}-{\beta}_j).\]

\section{Appendix B. Concluding remarks}

\noindent $\textbf{(a).}$ We observed that for $\xi\in\Pi(\Lambda),$ if
$\text{card}\left(\varSigma_\xi\right)\geq p+1$ then $\xi\in\Pi^{n+2}(\Lambda).$
For this, given $c\in\mathbb C,$ notice that there exists at most $p-n$ different
images $\eta_o^{(k)}\in [0,2)$ such that $\eta_o^{(k)}\neq\eta_s;~s\in F_{n}\smallsetminus \{0\}$
with all of $\eta_s$ are distinct and satisfies \[h_{p-n}(a_0^{(k)},\ldots,a_{n-1},a_n)=c,\]
where $a_j=e^{\pi i\eta_j};~j\in F_{n}.$

\bigskip

In particular, if we consider $n+2$ consecutive lines $\Gamma=\mathbb R\times F_{n+1},$
then $p=n+1$ and condition (\ref{exp12}) reduces to $a_n\neq a_{n+1}.$ Hence, the projections
$\Pi^{n+j}(\Lambda);~j=1, 2$ regain to their simplest form. That is,
\begin{align*}
\Pi^{n+1}(\Lambda)&=\{\xi\in\Pi(\Lambda):~\text{card}\left(\varSigma_\xi\right)={n+1}\},\\
\Pi^{n+2}(\Lambda)&=\{\xi\in\Pi(\Lambda):~\text{card}\left(\varSigma_\xi\right)\geq{n+2}\}.
\end{align*}
Thus, we infer that for a system of consecutive parallel lines, Theorem \ref{th17} can be
re-stated and could has a nearly unique necessary and sufficient condition.

\bigskip

\noindent $\textbf{(b).}$ Let $\Gamma=\mathbb R\times F_{n+1}$ and
$\Lambda=\mathbb R\times\left\{\frac{2k}{n+1};~k\in F_n\right\}.$
Then for any $\xi\in\mathbb R,$ $\text{card}\left(\varSigma_\xi\right)={n+1}.$
Thus, $\xi\in\Pi^{n+1}(\Lambda)$ that in turn implies $\Pi^{(n+1)^\ast}(\Lambda)=\mathbb R.$
For this, let $\xi\in\Pi^{n+1}(\Lambda)$ and denote $a_k=e^{\pi i\eta_k},$ where $\eta_k=\frac{2k}{n+1};~k\in F_n.$
Since $a_k^{n+1}=1,$ by a simple calculation, it follows that $X_\xi=((-1)^{n+1},0,\ldots,0)$
is the solution of $A^{n+1}_\xi X_\xi=B^{n+1}_\xi.$ Thus, $\xi\in\Pi^{(n+1)^\ast}(\Lambda).$
By Theorem \ref{th17}, we conclude that $(\Gamma,\Lambda)$ is not a HUP.

\bigskip

\noindent $\textbf{(c).}$ We observe a phenomenon of interlacing of $n+1$ totally
disconnected disjoint dispensable sets ${\Pi^{{(n-j+1)}^\ast}(\Lambda)}:~j\in F_n$
which are essentially derived from zero sets of $n+2$ trigonometric polynomials.
If Lemma \ref{lemma17} could be modified such that there exists $I'\subset I$ with
$I'\subset\Pi_{(n+1)^\ast}^p(\Lambda)\cup\Pi^{(n+2)}(\Lambda),$ then the necessary
condition of Theorem \ref{th17} would be minimized. Hence a characterization of
$\Lambda$ for finitely many lines might be obtained that would be closed to three
lines result. However, an exact analogue of three lines result for a large number of
lines is still open.

\bigskip

\noindent $\textbf{(d).}$ Finally, if we consider countably many parallel lines, then
whether the projection $\Pi(\Lambda)$ would be still relevant after deleting the countably
many dispensable sets, seems to be a reasonable question. However, since the dispensable
sets are totally disconnected, one can think of analyzing countably many lines for HUP in
terms of the Hausdorff dimension of the dispensable sets. We leave this question open for
the time being.

\bigskip

\noindent{\bf Acknowledgements:}\\

The authors extend a special thanks to Mr. Hiranmoy Pal for his contribution to \textbf{Appendix A}
about the notion of symmetric polynomials. Finally, we gratefully acknowledge the support
provided by IIT Guwahati, Government of India.

\bigskip

%\small

\end{document}